\theoremstyle{plain}
\newtheorem{thm}{\protect\theoremname}[section]
\theoremstyle{plain}
\newtheorem{fact}[thm]{\protect\factname}
\theoremstyle{plain}
\newtheorem{conjecture}[thm]{\protect\conjecturename}
\theoremstyle{definition}
\newtheorem{defn}[thm]{\protect\definitionname}
\theoremstyle{remark}
\newtheorem{rem}[thm]{\protect\remarkname}
\theoremstyle{plain}
\newtheorem{cor}[thm]{\protect\corollaryname}
\theoremstyle{remark}
\newtheorem{claim}[thm]{\protect\claimname}
\theoremstyle{plain}
\newtheorem{lem}[thm]{\protect\lemmaname}
\theoremstyle{plain}
\newtheorem{prop}[thm]{\protect\propositionname}
\newcommand{\FigBesBeg}[1][1.0]{%
 \let\MyFigure\figure
 \let\MyEndfigure\endfigure
 }
\newcommand{\FigBesEnd}{%
 \let\figure\MyFigure
 \let\endfigure\MyEndfigure}
\providecommand{\claimname}{Claim}
\providecommand{\conjecturename}{Conjecture}
\providecommand{\corollaryname}{Corollary}
\providecommand{\definitionname}{Definition}
\providecommand{\factname}{Fact}
\providecommand{\lemmaname}{Lemma}
\providecommand{\propositionname}{Proposition}
\providecommand{\remarkname}{Remark}
\providecommand{\theoremname}{Theorem}
\begin{document}
\global\long\def\F{\mathbb{\mathbb{\mathbf{F}}}}%
 
\global\long\def\rk{\mathbb{\mathrm{rank}}}%
 
\global\long\def\Hom{\mathrm{Hom}}%
 
\global\long\def\Epi{\mathrm{Epi}}%
 
\global\long\def\trw{{\cal T}r_{w} }%
\global\long\def\trwl{{\cal T}r_{w_{1},\ldots,w_{\ell}} }%
 
\global\long\def\tr{{\cal T}r }%
 
\global\long\def\cl{{\cal \mathrm{cl}} }%
 
\global\long\def\sq{{\cal \mathrm{sql}} }%
 
\global\long\def\auteq{\stackrel{\Aut\F}{\sim} }%
 
\global\long\def\aute#1{\stackrel{\Aut#1}{\sim} }%
 
\global\long\def\id{\mathrm{id}}%
 
\global\long\def\U{\mathcal{U}}%
 
\global\long\def\O{\mathcal{O}}%
 
\global\long\def\Aut{\mathrm{Aut}}%
 
\global\long\def\wl{w_{1},\ldots,w_{\ell}}%
 
\global\long\def\ssn{{\cal S}^{1}\wr S_{N}}%
\global\long\def\cmsn{C_{m}\wr S_{N}}%
 
\global\long\def\ctsn{C_{2}\wr S_{N}}%
 
\global\long\def\std{\mathrm{std}}%
\global\long\def\Xcov{{\scriptscriptstyle \overset{\twoheadrightarrow}{X}}}%
 
\global\long\def\covers{\leq_{\Xcov}}%
 
\global\long\def\ae{{\cal AE}}%
 
\global\long\def\alg{\le_{\mathrm{alg}}}%
 
\global\long\def\ff{\stackrel{*}{\le}}%
 
\global\long\def\ecm{\chi_{m}}%
 
\global\long\def\eci{\chi_{\infty}}%
 
\global\long\def\ect{\chi_{2}}%
 
\global\long\def\Xcov{{\scriptscriptstyle \overset{\twoheadrightarrow}{X}}}%
 
\global\long\def\covers{\leq_{\Xcov}}%
 
\global\long\def\XCO#1#2{\left[#1,#2\right)_{\Xcov}}%
 
\global\long\def\XF#1{\XCO{#1}{\infty}}%

\title{Some Orbits of Free Words that are Determined by Measures on Finite
Groups}
\author{Liam Hanany\thanks{L.~Hanany was supported by Israel Science Foundation (ISF) via grant
1071/16.}~~~~Chen Meiri\thanks{C.~Meiri was supported by BSF grant 2014099 and Israeli Science Foundation
(ISF) via grant 662/15.}~~~~Doron Puder\thanks{D.~Puder was supported by Israel Science Foundation (ISF) via grant
1071/16.}}
\maketitle
\begin{abstract}
Every word in a free group $\F$ induces a probability measure on
every finite group in a natural manner. It is an open problem whether
two words that induce the same measure on every finite group, necessarily
belong to the same orbit of $\mathrm{Aut}\F$. A special case of this
problem, when one of the words is the primitive word $x$, was settled
positively by the third author and Parzanchevski \cite{PP15}. Here
we extend this result to the case where one of the words is $x^{d}$
or $\left[x,y\right]^{d}$ for an arbitrary $d\in\mathbb{Z}$.
\end{abstract}
\tableofcontents{}

\section{Introduction\label{sec:Introduction}}

Let $r\in\mathbb{Z}_{\ge1}$, let $\F=\F_{r}$ be the free group on
$r$ generators $x_{1},\ldots,x_{r}$, and let $G$ be any finite
group. We occasionally use the letters $x$ and $y$ to denote arbitrary
distinct letters from $\left\{ x_{1},\ldots,x_{r}\right\} $. Every
word $w\in\F$ induces a map, called a word-map, 
\[
w\colon\underbrace{G\times\ldots\times G}_{r~\mathrm{times}}\to G,
\]
which is defined by substitutions. For example, if $w=x_{1}x_{3}x_{1}x_{3}^{-2}\in\F_{3}$,
then $w\left(g_{1},g_{2},g_{3}\right)=g_{1}g_{3}g_{1}g_{3}^{-2}$.
The push-forward via this word map of the uniform measure on $G\times\ldots\times G$
is called the \emph{$w$-measure} on $G$. Put differently, for each
$1\le i\le r$, substitute $x_{i}$ with an independent, uniformly-distributed
random element of $G$, and evaluate the product defined by $w$ to
obtain a random element in $G$ sampled by the $w$-measure. We say
the resulting element is a \emph{$w$-random} element of $G$.

\selectlanguage{american}%
For $w_{1},w_{2}\in\F$ write $w_{1}\aute{\F}w_{2}$ if there is an
automorphism $\theta\in\mathrm{Aut}\F$ with $\theta\left(w_{1}\right)=w_{2}$.
It is easy to see, as we explain in Section \ref{sec:profinite} below,
that applying an automorphism on $w$ does not alter the resulting
word-measure on finite groups. Thus,
\begin{fact}
\label{fact:same orbit then same measure}If $w_{1}\aute{\F}w_{2}$
then $w_{1}$ and $w_{2}$ induce the same measure on every finite
group.
\end{fact}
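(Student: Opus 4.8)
The plan is to observe that the $w$-measure on a finite group $G$ is the push-forward, under a single evaluation map, of the uniform (normalized counting) measure on the finite set $\Hom\left(\F,G\right)$, and that an automorphism of $\F$ acts on this set by a measure-preserving bijection. First I would fix the sample space: the assignment $\phi\mapsto\left(\phi\left(x_{1}\right),\ldots,\phi\left(x_{r}\right)\right)$ is a bijection between $\Hom\left(\F,G\right)$ and $G\times\ldots\times G$, under which the uniform measure on $G\times\ldots\times G$ corresponds to the uniform measure on $\Hom\left(\F,G\right)$. Under this identification, sampling a $w$-random element of $G$ amounts to choosing $\phi\in\Hom\left(\F,G\right)$ uniformly and outputting $\phi\left(w\right)$; equivalently, the $w$-measure is the push-forward of the uniform measure on $\Hom\left(\F,G\right)$ along the evaluation map $\mathrm{ev}_{w}\colon\Hom\left(\F,G\right)\to G$, $\phi\mapsto\phi\left(w\right)$.

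Next, given $\theta\in\Aut\F$ with $\theta\left(w_{1}\right)=w_{2}$, I would consider the precomposition map $\theta^{*}\colon\Hom\left(\F,G\right)\to\Hom\left(\F,G\right)$, $\phi\mapsto\phi\circ\theta$. Since $\left(\theta^{-1}\right)^{*}$ is a two-sided inverse for it, $\theta^{*}$ is a bijection of the finite set $\Hom\left(\F,G\right)$, hence it preserves the uniform measure. Finally, for every $\phi\in\Hom\left(\F,G\right)$ one has $\mathrm{ev}_{w_{2}}\left(\phi\right)=\phi\left(w_{2}\right)=\phi\left(\theta\left(w_{1}\right)\right)=\left(\phi\circ\theta\right)\left(w_{1}\right)=\mathrm{ev}_{w_{1}}\left(\theta^{*}\left(\phi\right)\right)$, i.e.\ $\mathrm{ev}_{w_{2}}=\mathrm{ev}_{w_{1}}\circ\theta^{*}$. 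Pushing the uniform measure on $\Hom\left(\F,G\right)$ forward along both sides, and using that $\theta^{*}$ preserves it, shows that the $w_{2}$-measure on $G$ equals the $w_{1}$-measure. Since $G$ was an arbitrary finite group, this proves the statement.

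There is essentially no genuine obstacle here beyond bookkeeping. The one point needing a moment's care is the direction of composition: one must \emph{pre}compose with $\theta$, so that the induced map on $\Hom\left(\F,G\right)$ is itself of the form "compose with an endomorphism of $\F$" and is therefore a bijection precisely because $\theta$ is invertible; postcomposition would not do. The other routine verification is that the coordinate identification $\Hom\left(\F,G\right)\cong G\times\ldots\times G$ really matches up the two uniform measures, which is immediate since it is a bijection of finite sets. If one prefers to avoid the language of $\Hom$ altogether, the same argument can be run coordinate-wise: writing $\theta^{-1}\left(x_{i}\right)=u_{i}\in\F$, the map $\left(g_{1},\ldots,g_{r}\right)\mapsto\left(u_{1}\left(g_{1},\ldots,g_{r}\right),\ldots,u_{r}\left(g_{1},\ldots,g_{r}\right)\right)$ is a bijection of $G\times\ldots\times G$ (with inverse given by $\theta$) that conjugates the word map of $w_{2}$ to that of $w_{1}$, hence intertwines the two $w$-measures.
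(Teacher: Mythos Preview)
Your argument is correct and is essentially the same as the paper's: both identify the $w$-measure as the pushforward of the uniform measure on $\Hom(\F,G)$ under $\varphi\mapsto\varphi(w)$, observe that precomposition with $\theta\in\Aut\F$ is a measure-preserving bijection of $\Hom(\F,G)$, and use $\varphi(w_{2})=(\varphi\circ\theta)(w_{1})$ to conclude. Your write-up just spells out in slightly more detail what the paper states in one line.
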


For example, $xyxy^{-1}\auteq x^{2}y^{2}$ and so they both induce
the same measure on every finite group. It is an open problem whether
the converse is also true, namely, whether being in the same $\Aut\F_{r}$-orbit
is the sole reason for two words to induce the same measure on every
finite group.
\begin{conjecture}
\label{conj:shalev}Let $w_{1},w_{2}\in\F_{r}$. If $w_{1}$ and $w_{2}$
induce the same measure on every finite group, then $w_{1}\aute{\F}w_{2}$.
\end{conjecture}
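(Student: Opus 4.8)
In full generality Conjecture \ref{conj:shalev} is open; the following describes the strategy behind the special cases stated in the abstract and the shape a complete proof would have to take.

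\textbf{From measures to core graphs.} The plan is to convert the probabilistic hypothesis into combinatorics of Stallings core graphs and then invoke a rigidity principle. Knowing the $w$-measure on every finite group is equivalent to knowing $\mathbb{E}\!\left[\chi\!\left(w\!\left(g_{1},\dots,g_{r}\right)\right)\right]$ for every irreducible character of every finite group, and the most informative test groups turn out to be the symmetric groups $S_{n}$ and the wreath products $\cmsn,\ctsn$ singled out in the paper's notation. As $n\to\infty$ the expected number of fixed points $\mathbb{E}_{S_{n}}\!\left[\mathrm{fix}(w)\right]$ admits an asymptotic expansion in powers of $n^{-1}$ whose leading behaviour encodes the primitivity rank $\pi(w)$ and whose coefficients are --- by M\"{o}bius inversion over the poset $\left(\ae\!\left(\left\langle w\right\rangle\right),\alg\right)$ of algebraic extensions --- signed counts of the core graphs $\Gamma$ with $\left\langle w\right\rangle\,\alg\,\Gamma$ of each Euler characteristic; in particular the leading coefficient counts the critical subgroups of $w$. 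Replacing $\mathrm{fix}$ by the number of fixed ordered tuples, and adding the invariants $\ecm,\eci,\ect$ extracted from the $\cmsn$-measures, captures in addition the image of $w$ in every free nilpotent quotient of $\F$ and whether $w$ is a proper power. The target of this step is the statement: \emph{the $w$-measure on all finite groups determines $\pi(w)$, the $\Aut\F$-orbit of the image of $w$ in $\F/\gamma_{c}(\F)$ for every $c$, and the isomorphism type of $\Gamma_{\left\langle w\right\rangle}$ together with the finite part of $\left(\ae\!\left(\left\langle w\right\rangle\right),\alg\right)$ weighted by Euler characteristic.}

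\textbf{Rigidity and the cases at hand.} One then needs the converse: that the $\Aut\F$-orbit of a cyclic word is recovered from $\Gamma_{\left\langle w\right\rangle}$ together with this poset. For $w=x$ this is precisely \cite{PP15}, and there it uses only $\pi$: a competitor $v$ (a word inducing the same measures as $w$) has $\pi(v)=\pi(x)=\infty$, hence $v$ is primitive, hence $v\aute{\F}x$. For $w=x^{d}$ with $\left|d\right|\ge2$ one gets $\pi(v)=1$, so $v=u^{e}$ with $u$ not a proper power and $\left|e\right|\ge2$, while the finite abelian test groups force the abelianization of $v$ into the $GL_{r}(\mathbb{Z})$-orbit of $\left(d,0,\dots,0\right)$ and hence $e\mid d$; since $v(g)=\bigl(u(g)\bigr)^{e}$, the $v$-measure on $G$ is the push-forward of the $u$-measure under $g\mapsto g^{e}$, and a de-powering analysis --- the delicate point, see below --- combining this with the finer cycle statistics $\mathbb{E}_{S_{n}}\!\left[\mathrm{fix}\!\left(u^{ej}\right)\right]$ forces $\left|e\right|=\left|d\right|$ and forces $u$ to be primitive; then $u\aute{\F}x$ and $v\aute{\F}x^{\pm d}\aute{\F}x^{d}$. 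For $w=\left[x,y\right]$ one argues the same way with $\F/\gamma_{3}(\F)$ replacing the abelianization: a competitor has $\pi(v)=2$, a single critical subgroup isomorphic to $\left\langle x,y\right\rangle$, and image $\left[x,y\right]^{\pm1}$ in $\F/\gamma_{3}(\F)$, and a direct core-graph classification of such words yields $v\aute{\F}\left[x,y\right]$. The case $w=\left[x,y\right]^{d}$ with $\left|d\right|\ge2$ reduces to this one exactly as $x^{d}$ reduces to $x$; the remaining values of $d$ are trivial or already covered.

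\textbf{Main obstacle.} For the full conjecture the hard part is the rigidity step: there is at present no proof that (even the measure-visible part of) the poset of algebraic extensions of $\left\langle w\right\rangle$ determines $w$ up to $\Aut\F$, so a genuinely new rigidity phenomenon would be needed. For the families treated here the difficulty is instead concentrated in the de-powering --- recovering the $u$-statistics from the $u^{e}$-statistics when $\gcd\!\left(e,\left|G\right|\right)\ne1$, where the $e$-th power map on $G$ collapses information --- and, in the commutator case, in the explicit classification of words whose core graph and critical-subgroup data agree with those of $\left[x,y\right]$.
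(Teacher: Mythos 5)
You are right that Conjecture \ref{conj:shalev} is open: the paper states it as such and proves only the special cases of Theorem \ref{thm:main}, so what remains to assess is whether your sketch of those cases would go through. It would not, and the gaps sit exactly where you flag ``delicate points.'' First, the commutator case: the paper never classifies core graphs or words with prescribed image in $\F/\gamma_{3}(\F)$. It invokes Khelif's Theorem \ref{thm:Khelif} (the set of commutators is closed in the profinite topology) to write a competitor as $\left[u',v'\right]$ with $u',v'$ free, and only then applies Theorem \ref{thm:free words inside a word} to conclude that $\left\langle u',v'\right\rangle$ is a free factor. Your proposed ``direct core-graph classification of words with $\pi(v)=2$, one critical subgroup, and the right image mod $\gamma_{3}$'' is asserted, not proved, and no such classification is known; without Khelif you have no a priori reason that $v$ is a commutator at all, which is the hypothesis Theorem \ref{thm:free words inside a word} needs.

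Second, the power case. The ``de-powering analysis'' you describe as the main difficulty is not how the paper proceeds and is not needed for $x^{d}$: once Lubotzky's Theorem \ref{thm:nth powers are closed} (or its $S_{N}$-statistics proof, Theorem \ref{thm:Lubotzky with homomorphisms to S_N}, which is close to your cycle-count idea) gives $v=u^{d}$, one applies Theorem \ref{thm:free words inside a word} directly to the word $x^{d}\in\F_{1}$ with the substitution $x\mapsto u$; if $u$ were non-primitive then $\tr_{u^{d}}(N)>\tr_{x^{d}}(N)$ for large $N$, a contradiction. No recovery of $u$-statistics from $u^{d}$-statistics is required. More seriously, your claim that $\left[x,y\right]^{d}$ reduces to $\left[x,y\right]$ ``exactly as $x^{d}$ reduces to $x$'' fails: the paper points out that $S_{N}$-measures do not even separate $\left[x,y\right]$ from $xyxy^{-1}$, so no amount of symmetric-group cycle statistics can drive that reduction. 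The paper's actual mechanism is the purely profinite Theorem \ref{thm:from word to its powers}: from $\theta\in\Aut\hat{\F}$ with $\theta(w^{d})=v^{d}$ one concludes $\theta(w)=v$ because roots in $\hat{\F}$ of elements of $\F$ are unique and lie in $\F$ (Corollary \ref{cor:every profinite root of a word is in F}), which in turn rests on the Herfort--Ribes theorem on centralizers in free profinite products. None of this machinery appears in your sketch, and it is precisely what closes the gaps you identify.
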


Conjecture \ref{conj:shalev} appears as \cite[Question 2.2]{Amit2011}
and as \cite[Conjecture 4.2]{Shalev2013}, and see also \cite[Section 8]{PP15}.
Our focus here is on special cases, where $w_{1}$ is some fixed word:
\begin{defn}
\label{def:profinitely rigid free groups}A word $w\in\F$ is called
\textbf{\emph{profinitely rigid in $\F$}} if whenever some word $w'\in\F$
induces the same measure as $w$ on every finite group, then $w\aute{\F}w'$.
The word $w\in\F$ is called \textbf{\emph{universally profinitely
rigid}} if it is profinitely rigid in every free extension $\F*\F_{\ell}$
of $\F$ ($\ell\ge1$).
\end{defn}

In Section \ref{sec:profinite} we extend this notion to arbitrary
finitely generated groups, give equivalent definitions and justify
our choice of the name ``profinitely rigid''. Note that even if
$w\in\F=\F_{r}$ is known to be profinitely rigid in $\F$, it does
not automatically follow that this property extends to words written
with more letters, namely, whether $w$ is also profinitely rigid
in $\F*\F_{\ell}$ for $\ell\ge1$. For example, it is easy to show
that the word $x$ is profinitely rigid in $\F_{1}=\F\left(x\right)\cong\mathbb{Z}$
(the orbit of $x$ in $\F_{1}$ is the sole orbit which induces measures
with full support on every finite group). However, it is much harder
to show $x$ is also profinitely rigid in $\F_{2}$ or in $\F_{r}$
for arbitrary $r$.

The fact that the trivial word $w=1$ is profinitely rigid in every
free group is equivalent to the fact that free groups are residually
finite. A case which attracted considerable attention was that of
primitive words, namely that of $\mathrm{Aut}\F.x$ -- the $\mathrm{Aut}\F$-orbit
containing the free generators of $\F$. This case was settled: first
it was shown that $x$ was profinitely rigid in $\F_{2}$ \cite{Puder2014},
and then that it was universally profinitely rigid \cite{PP15}. In
fact, it is shown in \cite{PP15} that $w\in\F$ induces the uniform
measure on the symmetric group $S_{N}$ for all $N$ if and only if
$w$ is primitive. A completely different, geometric proof of the
universal profinite rigidity of $x$ was later found by Wilton \cite[Corollary E]{wilton2018essential}.
Our main result here extends that theorem from \cite{PP15} and gives
more special cases of Conjecture \ref{conj:shalev}. Recall that $x$
and $y$ are assumed to belong to the same basis of the free group
$\F$.
\selectlanguage{english}%
\begin{thm}
\label{thm:main} For every $d\in\mathbb{Z}_{\ge1}$, the words $x^{d}$
and $\left[x,y\right]^{d}$ are universally profinitely rigid.
\end{thm}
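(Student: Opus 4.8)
The plan is to adapt the strategy of \cite{PP15}, which characterizes primitive words via the expected number of fixed points of a $w$-random permutation in $S_N$, but now working with a richer family of finite groups, namely the wreath products $C_m \wr S_N$ and $C_\infty \wr S_N$ (finite quotients thereof), which the notation in the preamble is clearly set up to handle. The starting observation is that the $w$-measures on all finite groups determine, in particular, the $w$-measures on these wreath products, and from a $C_m\wr S_N$-valued random element one can extract finer statistics than from $S_N$ alone: besides the cycle structure of the underlying permutation, one reads off the ``winding numbers'' around cycles, i.e. the products of the $C_m$-coordinates along each orbit. The first step is therefore to set up, for a word $w$, a generating-function or expectation invariant $\mathbb{E}_{w,G}[\text{fix}]$ and its refinements over $G = C_m\wr S_N$, and to show these quantities are polynomial in $N$ (resp. behave well as $m\to\infty$), with leading term governed by a combinatorial/topological invariant of $w$ — its primitivity rank and the set of critical subgroups, exactly as in the $\mathrm{Tr}$-expansion machinery of \cite{PP15} (the macros \trw, \cl, \sq, primitivity rank $\pi(w)$, algebraic extensions $\alg$ suggest this is the route).

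The second step is the core computation: evaluate these invariants on the specific words $w = x^d$ and $w = [x,y]^d$. For $x^d$ one substitutes a single $G$-valued random element $g$ and studies $g^d$; over $C_m\wr S_N$ this is an explicit and tractable computation — the cycles of the permutation part break into arithmetic-progression patterns determined by $\gcd$ with $d$, and the $C_m$-winding is a linear condition — so one obtains closed forms for the fixed-point statistics as functions of $d, m, N$. For $[x,y]^d$ one uses the classical fact that the $[x,y]$-measure on $S_N$ (and on wreath products) is computable via representation theory / the number of ways to write a given conjugacy-class element as a commutator, i.e. character sums $\sum_\chi \chi(g)/\chi(1)^{?}$; raising to the $d$-th power composes with the power-map pushforward, and again everything is explicit. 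The key point to extract is that these invariants, taken over the whole family $\{C_m\wr S_N\}$, pin down $d$ and pin down which of the two families ($x^d$ versus $[x,y]^d$) one is in, and moreover force any competitor word $w'$ with the same measures to have the same primitivity rank ($\pi(x^d)$, resp. $\pi([x,y]^d)$) and a compatible critical-subgroup structure.

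The third and decisive step is rigidity: knowing that $w'$ induces the same measures as $x^d$ (resp. $[x,y]^d$) on every finite group, deduce $w' \aute{\F} x^d$ (resp. $w'\aute{\F}[x,y]^d$). Here one combines (i) the constraints on $\pi(w')$ and the critical overgroups from Step 2, which localize $w'$ to lie, up to automorphism, inside a rank-$1$ (resp. rank-$2$) free factor and to be a proper power or commutator-power there; (ii) the already-known universal profinite rigidity of the primitive word $x$ from \cite{PP15}, applied after passing to an appropriate quotient or retract that kills the ``$d$-th power'' structure, to identify the underlying primitive/commutator generator; and (iii) a direct analysis inside $\F_1$ or $\F_2$ — in $\mathbb{Z}$, the only elements with a given power-measure on all finite cyclic groups are $\pm x^d$, which are $\Aut$-equivalent; in $\F_2$, one shows $[x,y]^d$ is the unique $\Aut\F_2$-orbit with its profile, using that $[x,y]$ generates the commutator structure detected by the $\mathrm{SL}_2$-type or wreath-product invariants. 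The main obstacle I expect is precisely this last step in the commutator case: controlling the critical subgroups of a hypothetical $w'$ well enough to conclude it is conjugate into $\F_2 = \F(x,y)$ as a $d$-th power of a commutator, rather than merely having the same low-order measure statistics — this is where the bulk of the combinatorial-topological work (Stallings graphs, the classification of algebraic extensions realizing the primitivity rank) will be needed, mirroring but going beyond the primitive-word case.
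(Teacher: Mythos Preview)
Your proposal has a genuine gap, and the route you outline is not the one the paper takes.

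The paper does \emph{not} attack $x^d$ and $[x,y]^d$ directly via refined measure statistics on wreath products. Instead it proves two independent ingredients and combines them. First, it shows (Theorem~\ref{thm:from word to its powers}) that profinite rigidity is closed under taking powers: if $w$ is profinitely rigid then so is $w^d$. This is a purely profinite argument --- Lubotzky's theorem that $\{u^d:u\in\F\}$ is closed in the profinite topology, together with a Herfort--Ribes centralizer computation showing that any $d$th root of an element of $\F$ inside $\hat{\F}$ already lies in $\F$. This reduces both families to their $d=1$ cases. Second, for the base case $[x,y]$, the paper invokes Khelif's theorem (Theorem~\ref{thm:Khelif}): if a word is a commutator in every finite quotient of $\F$, then it is a commutator in $\F$. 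Thus any $w'$ with the same measures as $[x,y]$ is literally $[u,v]$ with $u,v$ free, and then a single application of Theorem~\ref{thm:free words inside a word} (a direct refinement of the PP15 fixed-point analysis, over $S_N$ alone --- no wreath products needed) forces $\langle u,v\rangle$ to be a free factor.

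The gap in your plan is precisely your Step~3, and you correctly flag it as the obstacle. Asymptotic invariants such as $\pi(w')$ and the critical-subgroup count constrain the \emph{leading term} of $\tr_{w'}(N)$, but they do not by themselves force $w'$ to be exactly of the shape $v^d$ or $[u,v]^d$. (Concretely: $[x,y]$ and $xyxy^{-1}$ induce identical measures on every $S_N$, so $S_N$-statistics alone cannot even separate the orbit of $[x,y]$; and nothing in your outline explains why richer wreath-product invariants would yield the exact algebraic form rather than just matching more asymptotic data.) The missing idea is exactly the pair Lubotzky/Khelif: these are profinite-topology closure statements that convert ``same image in every finite quotient'' into ``same algebraic shape in $\F$'', after which the PP15-style comparison $\tr_w(N)<\tr_{w(u_1,\ldots,u_k)}(N)$ finishes the job. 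Without those closure theorems (or substitutes for them), your Step~3 is not a proof sketch but a restatement of the problem.
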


Namely, if $w\in\F$ induces the same measure as $x^{d}$ on every
finite group, then $w\aute{\F}x^{d}$, and if it induces the same
measure as $\left[x,y\right]^{d}$, then $w\aute{\F}\left[x,y\right]^{d}$.
To the best of our knowledge, if one allows also $d=0$, Theorem \ref{thm:main}
captures all known $\Aut\F$-orbits of profinitely rigid words. We
remark that although generally $w$ and $w^{-1}$ do not necessarily
lie in the same $\mathrm{Aut}\F$-orbit, we do have $x^{-d}\auteq x^{d}$
and $\left[x,y\right]^{-d}\auteq\left[x,y\right]^{d}$, so negative
powers of $x$ and $\left[x,y\right]$ would be redundant in the statement
of Theorem \ref{thm:main}.

In a similar direction, one may consider word measures not only on
finite groups but more generally, on any compact group, where instead
of the uniform measure on $G\times\ldots\times G$ one takes the Haar
measure. Fact \ref{fact:same orbit then same measure} remains true
if the word ``finite'' is replaced by ``compact'' (e.g., \cite[Fact 2.5]{MP}),
and one can then ask a slightly weaker version of Conjecture \ref{conj:shalev}
where one assumes that $w_{1}$ and $w_{2}$ induce the same measure
on every \emph{compact} group. Indeed, in \cite{MP-surface-words},
Magee and the third author study this conjecture in the case that
$w_{1}$ is the surface word $\left[x_{1},y_{1}\right]\cdots\left[x_{g},y_{g}\right]$
or $x_{1}^{2}\cdots x_{g}^{2}$ for some $g\in\mathbb{Z}_{\ge1}$.
They show that if $w_{2}\in\F$ induces the same measure as a surface
word $w_{1}$ on every \emph{compact} group, then $w_{1}\aute{\F}w_{2}$.
Note that the word $\left[x,y\right]$ is a surface word which is
also covered by Theorem \ref{thm:main}. For this particular word,
Theorem \ref{thm:main} strengthens the result from \cite{MP-surface-words}
as it relies on measures on finite groups only.

\medskip{}

A main tool in our proof of Theorem \ref{thm:main} is a generalization
of \cite[Theorem 1.8]{PP15}, which deals with word measures on the
symmetric groups $S_{N}$. For $w\in\F=\F_{r}$, consider a $w$-random
permutation in $S_{N}$. The number of fixed points of a permutation
is equal to the trace of the corresponding permutation matrix. Thus,
we denote the expected number of fixed points of a $w$-random permutation
in $S_{N}$ by
\begin{equation}
\trw\left(N\right)\stackrel{\mathrm{def}}{=}\frac{1}{\left(N!\right)^{r}}\sum_{\sigma_{1},\ldots,\sigma_{r}\in S_{N}}\mathrm{tr}\left(w\left(\sigma_{1},\ldots,\sigma_{r}\right)\right).\label{eq:trw}
\end{equation}
A set of words $\left\{ u_{1},\ldots,u_{k}\right\} \in\F$ is called
\emph{free} if they admit no non-trivial relation.
\begin{thm}
\label{thm:free words inside a word}Let $w\in\F_{k}$ be a word which
is not contained in a proper free factor of $\F_{k}$. If $u_{1},\ldots,u_{k}\in\F$
are free words in $\F=\F_{r}$ which do not generate a free factor,
then for every large enough $N$, 
\begin{equation}
\tr_{w}\left(N\right)<\tr_{w\left(u_{1},\ldots,u_{k}\right)}\left(N\right).\label{eq:trw < trw(u)}
\end{equation}
\end{thm}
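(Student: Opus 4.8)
The plan is to derive Theorem~\ref{thm:free words inside a word} from the description of $\trw(N)$ underlying \cite[Theorem~1.8]{PP15}. That input says: for any nontrivial $v\in\F$,
\[
\tr_v(N)\;=\;\sum_{\langle v\rangle\,\alg\,H\le\F}\Psi_H(N),
\]
where $H$ runs over the finitely many algebraic extensions of $\langle v\rangle$ — equivalently, the folded quotients of the cycle spelling $v$ — and $\Psi_H$ is a rational function depending only on the pointed labelled core graph $\Gamma_H$ with its $v$-marking, with $\Psi_H(N)>0$ for $N\gg0$ and $\Psi_H(N)=N^{1-\mathrm{rank}(H)}\bigl(1+O(1/N)\bigr)$; thus every $\Psi_H$ has leading coefficient $1$, and grouping by rank recovers $\tr_v(N)-1=\Theta\!\left(N^{1-\pi(v)}\right)$ with leading coefficient $\lvert\mathrm{Crit}(v)\rvert$. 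For our purposes we will have to look deeper into the expansion, because — as the case $w=\left[x,y\right]^{d}$ with the $u_i$ non-primitive and not proper powers shows — the substitution can leave both $\pi(w)$ and $\lvert\mathrm{Crit}(w)\rvert$ unchanged.

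Set $U=\langle u_1,\ldots,u_k\rangle$. Since $u_1,\ldots,u_k$ are free, $\phi\colon\F_k\to\F$, $x_i\mapsto u_i$, is an isomorphism onto $U$ with $\phi(w)=w(u_1,\ldots,u_k)$. Being an algebraic extension, being non-primitive, and the rank are intrinsic to a nested pair of free groups, so $\phi$ carries the algebraic extensions of $\langle w\rangle$ in $\F_k$ injectively and rank-preservingly into those of $\langle w(u_1,\ldots,u_k)\rangle$ in $\F$, with image exactly $\{\,K\le U:\langle w(u_1,\ldots,u_k)\rangle\alg K\,\}$; in particular $\pi\!\left(w(u_1,\ldots,u_k)\right)\le\pi(w)$. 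This is where the two hypotheses act. The assumption that $w$ is not in a proper free factor of $\F_k$ means exactly that $\F_k$ is an algebraic extension of $\langle w\rangle$, hence $U$ is an algebraic extension of $\langle w(u_1,\ldots,u_k)\rangle$. The assumption that $u_1,\ldots,u_k$ generate no free factor means $U$ is not a free factor of $\F$; since a subgroup of a free group is a free factor if and only if it has no proper algebraic extension (cf.\ the treatment of algebraic extensions in \cite{PP15}), there is $U\lneq E\le\F$ with $U\alg E$, hence $\langle w(u_1,\ldots,u_k)\rangle\alg E$ by transitivity, and $E\not\le U$. So $\langle w(u_1,\ldots,u_k)\rangle$ has an algebraic extension outside $\mathrm{im}\,\phi$: the index set of the expansion for $w(u_1,\ldots,u_k)$ properly contains a rank-preserving copy of the one for $w$.

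If $\pi\!\left(w(u_1,\ldots,u_k)\right)<\pi(w)$, we are finished: both sides of \eqref{eq:trw < trw(u)} are $1$ plus a positive quantity of order $N^{1-\pi}$, and the right-hand exponent is strictly larger. So assume $\pi\!\left(w(u_1,\ldots,u_k)\right)=\pi(w)$, and subtract the two expansions:
\[
\tr_{w(u_1,\ldots,u_k)}(N)-\trw(N)\;=\;\sum_{\substack{\langle w(u_1,\ldots,u_k)\rangle\,\alg\,K\\ K\notin\mathrm{im}\,\phi}}\Psi_K(N)\;+\;\sum_{\langle w\rangle\,\alg\,H}\bigl(\Psi_{\phi(H)}(N)-\Psi_H(N)\bigr).
\]
The first sum is a sum of strictly positive rational functions — the ``gain'' — whose dominant term is of order $N^{1-\rho^{\ast}}$, where $\rho^{\ast}$, with $\pi(w)\le\rho^{\ast}\le k$, is the least rank of an algebraic extension of $\langle w(u_1,\ldots,u_k)\rangle$ missed by $\phi$. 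Each summand of the second sum shares its leading term with $\Psi_H$ (degree $1-\mathrm{rank}(H)$, coefficient $1$) but has a genuine lower-order discrepancy, of either sign, reflecting the fact that $\Gamma_{\phi(H)}$ is obtained from $\Gamma_H$ by blowing each edge up into a reduced $u_i$-path and folding.

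The heart of the proof — and the step I expect to be the main obstacle — is to show the first nonvanishing coefficient of $\tr_{w(u_1,\ldots,u_k)}(N)-\trw(N)$ is positive. I would construct an explicit injection, at the level of folded quotients of cycles, from the quotients of the $w$-cycle into those of the $w(u_1,\ldots,u_k)$-cycle, and analyze precisely the quotients it misses: this identifies $\rho^{\ast}$ and exhibits the $N^{1-\rho^{\ast}}$-part of the ``gain'' as a positive integer. One must then show that the ``discrepancies'' $\Psi_{\phi(H)}-\Psi_H$, combined, vanish in all degrees below $1-\rho^{\ast}$ and do not overwhelm the gain in degree $1-\rho^{\ast}$ — i.e.\ that the inflation of the old core graphs is exactly compensated, order by order, by the combinatorics of the substitution. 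When $\rho^{\ast}$ is small this is a question of matched constants rather than orders of magnitude, so this bookkeeping — where the hypothesis ``$U$ is not a free factor'' has to be used again, to rule out degenerate cancellations — is the delicate part.
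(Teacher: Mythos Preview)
Your setup is exactly right: the injection $\phi$ from algebraic extensions of $\langle w\rangle$ in $\F_k$ to those of $\langle w(u_1,\ldots,u_k)\rangle$ in $\F$, the identification of its image with the extensions lying inside $U$, and the existence of an extension $E$ outside $U$ via transitivity and the hypothesis that $U$ is not a free factor --- all of this matches the paper's argument. The gap is in your treatment of the ``discrepancy'' term $\sum_H\bigl(\Psi_{\phi(H)}(N)-\Psi_H(N)\bigr)$, which you describe as genuinely nonzero, of uncertain sign, and requiring delicate bookkeeping to control. In fact this sum is \emph{identically zero}, term by term, and once you see why, the proof is finished with no further work.

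The point is that $\Psi_H$ (the paper's $R_{\langle w\rangle,H}$) does not actually depend on a labelled core graph inside some ambient free group; it depends only on the abstract pair $\langle w\rangle\le H$. Recall that $R$ is obtained by M\"obius inversion, over the finite poset $\{K:\langle w\rangle\alg K\alg H\}$, of the function $\Phi_{\langle w\rangle,K}(N)$, and $\Phi_{\langle w\rangle,K}(N)$ is the expected number of common fixed points of the image of $\langle w\rangle$ under a uniform $\psi\in\Hom(K,S_N)$. This expectation sees $K$ only as an abstract group; it is blind to how $K$ sits inside $\F_k$ or $\F$. Since $\phi\colon\F_k\to U$ is an isomorphism, it carries the entire poset and all the $\Phi$-values over verbatim: $\Phi_{\langle w\rangle,K}(N)=\Phi_{\phi(\langle w\rangle),\phi(K)}(N)$ for every intermediate $K$, hence $R_{\langle w\rangle,H}(N)=R_{\phi(\langle w\rangle),\phi(H)}(N)$ by induction along the poset. (This intrinsic, basis-free nature of $R$ is precisely \cite[Proposition~5.1]{PP15}; your description of $\Psi_H$ as attached to a labelled core graph is the a~priori definition, not the final one.) With the discrepancy gone, your display reads
\[
\tr_{w(u_1,\ldots,u_k)}(N)-\trw(N)\;=\;\sum_{\substack{\langle w(u_1,\ldots,u_k)\rangle\,\alg\,K\le\F\\ K\nleq U}}R_{\langle w(u_1,\ldots,u_k)\rangle,K}(N)\;=\;C\cdot N^{1-\rho^{\ast}}+O\bigl(N^{-\rho^{\ast}}\bigr)
\]
with $C\ge1$ the number of such $K$ of minimal rank $\rho^{\ast}$, and this is positive for large $N$. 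No case split on $\pi$ and no combinatorial injection of folded quotients is needed.
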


\begin{rem}
Some remarks are due:
\begin{enumerate}
\item If $w=x$ is the single-letter word, then Theorem \ref{thm:free words inside a word}
states that whenever $1\ne u\in\F_{r}$ is non-primitive, then 
\[
\tr_{x}\left(N\right)<\tr_{u}\left(N\right)
\]
for every large enough $N$. This yields that the word $x$ is profinitely
rigid, and indeed, a more quantitative version of this inequality
is the content of \cite[Theorem 1.8]{PP15}. In fact, Theorem \ref{thm:generalization of pi}
below gives a quantitative version of (\ref{eq:trw < trw(u)}) which
generalizes the quantitative version in \cite[Theorem 1.8]{PP15}.
\item The condition on $w$ in Theorem \ref{thm:free words inside a word}
is necessary but ``harmless''. To see it is necessary, consider
the primitive word $w=xy\in\F_{2}=\F\left(x,y\right)$. The words
$u_{1}=a^{n}$ and $u_{2}=b$ are free in $\F\left(a,b\right)$ and
generate the subgroup $\left\langle a^{n},b\right\rangle $ which
is not a free factor. Yet $w\left(u_{1},u_{2}\right)=a^{n}b$ is a
primitive element and therefore $\trw\left(N\right)=\tr_{w\left(u_{1},u_{2}\right)}\left(N\right)=1$
for every $N\ge1$. However, this condition can be easily ``bypassed'':
if $w$ is contained in a proper free factor of $\F_{k}$, find some
$w'$ with $w'\aute{\F_{K}}w$ which uses the smallest possible number
of letters, say $q<k$ letters, and apply the theorem with $w'\in\F_{q}$
in the stead of $w$.
\item The statement of Theorem \ref{thm:free words inside a word} is also
not true without the condition that $u_{1},\ldots,u_{k}$ be free
and not generate a free factor. Consider, for example, the case $w=x^{3}y^{2}$,
$u_{1}=a$ and $u_{2}=a^{-1}$ with $u_{1},u_{2}\in\F(a)\cong\mathbb{Z}$.
Then $w\left(u_{1},u_{2}\right)=a$ and for every $N\ge3$,
\[
1+\frac{1}{N-1}=\tr_{w}\left(N\right)>\tr_{w\left(u_{1},u_{2}\right)}\left(N\right)=1.
\]
Moreover, with the same $w$, if we take $u_{1}=w$ and $u_{2}=w^{-1}$,
we get $w\left(u_{1},u_{2}\right)=w$. Finally, if $u_{1},\ldots,u_{k}$
are free but generate a free factor of $\F$ then $w\aute{\F}w\left(u_{1},\ldots,u_{k}\right)$
whence $\tr_{w}\left(N\right)=\tr_{w\left(u_{1},\ldots,u_{k}\right)}\left(N\right)$
for all $N$.
\end{enumerate}
\end{rem}

Another ingredient of the proof of Theorem \ref{thm:main} concerns
powers of general words. We prove, in fact, that profinite rigidity
is preserved under taking powers:
\begin{thm}
\label{thm:from word to its powers}Let $w\in\F$ and $d\in\mathbb{Z}$.
If $w$ is profinitely rigid then so is $w^{d}$.
\end{thm}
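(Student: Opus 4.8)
The plan is to reduce the rigidity of $w^{d}$ to the rigidity of $w$ by a spectral/asymptotic argument on $S_{N}$, using Theorem \ref{thm:free words inside a word} as the engine. Suppose $w'\in\F$ induces the same measure as $w^{d}$ on every finite group; we must show $w'\aute{\F}w^{d}$. The first step is to understand $\tr_{w^{d}}(N)$ in terms of $\tr_{w}(N)$: for a random permutation $\sigma=w(\sigma_{1},\ldots,\sigma_{r})$, the number of fixed points of $\sigma^{d}$ counts points lying in cycles of $\sigma$ whose length divides $d$, so $\tr_{w^{d}}(N)=\sum_{e\mid d}e\cdot\mathbb{E}[c_{e}(w;N)]$, where $c_{e}(w;N)$ is the number of $e$-cycles. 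More usefully, the whole cycle structure of $\sigma^{d}$ — hence every moment $\mathbb{E}[\mathrm{tr}((\sigma^{d})^{j})]=\mathbb{E}[\mathrm{tr}(\sigma^{dj})]=\tr_{w^{dj}}(N)$ — is determined by the cycle structure of $\sigma$. Since $w'$ and $w^{d}$ induce the same $S_{N}$-measure, we get $\tr_{(w')^{j}}(N)=\tr_{w^{dj}}(N)$ for all $j$ and all $N$, i.e. $w'$ and $w^{d}$ induce the same measure on $S_{N}$ jointly with all their powers.

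The heart of the matter is then to produce, from $w'$, a word $u$ playing the role of ``$d$-th root'' so that $w'\auteq u^{d}$ and $u$ induces the same measure as $w$ on every finite group; rigidity of $w$ then finishes the job, after noting that $u\auteq w$ implies $u^{d}\auteq w^{d}$. To find such a $u$: let $K$ be the rank of the free factor generated by $w'$ and, replacing $w'$ by an $\Aut\F$-equivalent word, assume $w'\in\F_{K}$ is not contained in any proper free factor of $\F_{K}$. The key observation is that $w'$ must be a proper power: the asymptotics of $\tr_{v}(N)$ as $N\to\infty$ detect the maximal root — precisely, by \cite[Theorem 1.8]{PP15} and its generalizations, a word $v$ which is not a proper power and is primitive in its free-factor support has $\tr_{v}(N)\to1$, while for $v=u^{d}$ with $u$ primitive one has $\tr_{v}(N)\to$ the number of divisors of $d$, and more generally the limit of $\tr_{v^{j}}(N)$ over all $j$ encodes $\gcd$-type information pinning down the exponent. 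Comparing these limits for $w'$ and for $w^{d}$ forces $w'=u^{d}$ for some $u\in\F_{K}$ with $u$ not contained in a proper free factor (here one uses that in a free group every element has a unique maximal root, and Theorem \ref{thm:free words inside a word} applied with this $u$ in the role of $w$ guarantees the free-factor-support of $u$ equals that of $w'$).

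It then remains to check that this $u$ induces the same finite-group measure as $w$. For symmetric groups this is immediate from the reduction above: $\tr_{u^{j}}(N)=\tr_{(w')^{j/d}\text{-type quantities}}$... more carefully, the cycle statistics of $u(\sigma_{1},\ldots)$ are recovered from those of $u^{d}=w'$ exactly as the cycle statistics of $w^{d}$ are recovered from those of... no — we must go the other way. The clean route is: $\mathbb{E}[\mathrm{tr}(u^{j})]$ for $j$ ranging over integers determines the $u$-measure on each $S_{N}$ (by the classical fact that the full cycle-type distribution is determined by the joint moments $\mathbb{E}[\prod \mathrm{tr}(u^{j_i})]$, which are themselves $\tr$-values of powers), and each such moment, being a linear combination of $\tr_{u^{e}}(N)$ over $e\mid$ relevant indices, is determined by the analogous moments of $w$ via the same $e\mid d$ bookkeeping that relates $w^{d}$ to $w$ — and those agree because $u^{d}=w'$ has the same $S_{N}$-measure as $w^{d}$. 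This pins the $u$-measure on every $S_{N}$; since the symmetric groups $\{S_{N}\}$ separate $\Aut\F$-orbits to the extent needed (indeed \cite{PP15} and the present paper work entirely with $S_{N}$), and more to the point since $w$ is profinitely rigid, it suffices that $u$ and $w$ induce the same measure on every finite group — which follows from the $S_{N}$ case because every finite group embeds in some $S_{N}$ and a word-measure on a subgroup is the restriction. Thus $u\auteq w$, hence $w'=u^{d}\auteq w^{d}$, completing the proof.

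The main obstacle I anticipate is the ``root extraction'' step: showing that the equality of word-measures with $w^{d}$ forces $w'$ to literally be a $d$-th power (up to $\Aut\F$) rather than merely sharing the gross statistics. This requires more than the leading asymptotics of $\tr$; one needs the finer information in the quantitative version of Theorem \ref{thm:free words inside a word} (Theorem \ref{thm:generalization of pi}) to control the subleading terms and argue that any word whose power-traces match those of $w^{d}$ on all $S_{N}$ must have a $d$-th root with the correct free-factor support — and to rule out, e.g., a word that is a $d'$-th power for some $d'$ with the same divisor structure but which is genuinely different. I expect this is handled by combining the uniqueness of maximal roots in free groups with an induction on $d$ (peeling off one prime factor at a time) and a careful comparison of $\tr_{w'^{j}}(N)$ with $\tr_{w^{dj}}(N)$ for enough values of $j$ to recover the exponent exactly, not just up to its set of divisors.
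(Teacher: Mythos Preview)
Your proposal has two genuine gaps, and they are fatal to the approach as written.

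First, the claim that ``a word-measure on a subgroup is the restriction'' is false. If $H\le S_{N}$, the $w$-measure on $H$ comes from uniformly random homomorphisms $\F\to H$, not from restricting the $S_{N}$-measure. The paper itself notes (beginning of \S4.2) that $[x,y]$ and $xyxy^{-1}$ induce identical measures on every $S_{N}$ yet lie in different $\Aut\F$-orbits; so equality of $S_{N}$-measures does not propagate to all finite groups. Hence even if you managed to show that $u$ and $w$ induce the same measure on every $S_{N}$, you could not conclude they induce the same measure on every finite group, and profinite rigidity of $w$ would give you nothing.

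Second, and more basically, you cannot recover the $u$-measure on $S_{N}$ from the $u^{d}$-measure on $S_{N}$: the map $\sigma\mapsto\sigma^{d}$ on cycle types is not injective (e.g.\ for $d=2$, a transposition and two fixed points both square to two fixed points), so distinct distributions for $u$ can yield the same distribution for $u^{d}$. Your moment argument does not get around this.

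The paper's proof avoids both problems by working in $\hat{\F}$ rather than with measures. Lubotzky's theorem gives $w'=v^{d}$; Theorem~\ref{thm:six equivalences} gives $\theta\in\Aut\hat{\F}$ with $\theta(w)^{d}=\theta(w^{d})=v^{d}$; and then the key step --- uniqueness of $d$-th roots of elements of $\F$ inside $\hat{\F}$, proved via the Herfort--Ribes centralizer theorem (Lemma~\ref{lem:centralizer of an element in hat F}, Corollary~\ref{cor:every profinite root of a word is in F}) --- forces $\theta(w)=v$. Thus the \emph{same} $\theta$ witnesses $w\aute{\hat{\F}}v$, and profinite rigidity of $w$ finishes. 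No comparison of $u$- and $w$-measures is ever needed.
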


In particular, if $w$ is profinitely rigid, so is $w^{-1}$, although
this case is immediate from the definition. (Note that generally,
$w$ and $w^{-1}$ do not belong to the same $\mathrm{Aut}\F$-orbit.)
We stress that the proof of Theorem \ref{thm:from word to its powers}
uses only the theory of profinite groups and free products, and does
not rely on measures induced on groups.

As explained in Section \ref{sec:profinite} below, profinite rigidity
of words yields two additional properties concerning the profinite
completion of the free group and its profinite topology. We state
these properties in the following corollary:
\begin{cor}
\label{cor:profinite}Let $w=x^{d}$ or $w=\left[x,y\right]^{d}$
for some $d\in\mathbb{Z}_{\ge1}$. Then,
\begin{enumerate}
\item \label{enu:closed in the prof top}The $\Aut\F$-orbit of $w$ is
closed in the profinite topology of $\F$.
\item \label{enu:aut-F-hat equivalence}Let $\widehat{\F}$ be the profinite
completion of $\F$. If $w'\in\F$ is in the same $\Aut\widehat{\F}$-orbit
as $w$, then $w'\aute{\F}w$.
\end{enumerate}
\end{cor}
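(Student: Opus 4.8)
The plan is to deduce both statements directly from Theorem~\ref{thm:main} together with general properties of profinite rigidity (as set up in Section~\ref{sec:profinite}). Throughout I would write $\F=\F_{r}$, regard $w=x^{d}$ (resp. $[x,y]^{d}$) as an element of $\F_{r}$, and use that since $w$ is \emph{universally} profinitely rigid it is in particular profinitely rigid in $\F_{r}$ for every (large enough) $r$; combined with Fact~\ref{fact:same orbit then same measure} this yields the identity
\[
\Aut\F.w\;=\;\bigl\{v\in\F:v\text{ and }w\text{ induce the same measure on every finite group}\bigr\}.
\]

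For part~(\ref{enu:closed in the prof top}), I would argue that for each fixed finite group $G$ the set $S_{G}:=\{v\in\F:v\text{ and }w\text{ induce the same measure on }G\}$ is clopen in the profinite topology of $\F$. Indeed, the evaluation homomorphisms $\mathrm{ev}_{\bar{h}}\colon\F\to G$ (for $\bar{h}\in G^{r}\cong\Hom(\F,G)$) have kernels of finite index, so their intersection $K_{G}:=\bigcap_{\bar{h}}\ker(\mathrm{ev}_{\bar{h}})$ is a finite-index normal subgroup of $\F$; the function $\bar{h}\mapsto v(\bar{h})$, and hence the $v$-measure on $G$, depends only on the coset $vK_{G}$. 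Thus $S_{G}$ is a union of cosets of $K_{G}$, hence clopen. By the displayed identity, $\Aut\F.w=\bigcap_{G}S_{G}$ is an intersection of closed sets, hence closed.

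For part~(\ref{enu:aut-F-hat equivalence}), let $\theta\in\Aut\widehat{\F}$ with $\theta(w)=w'$, where $w'\in\F$. For any finite group $G$, the universal property of $\widehat{\F}$ identifies $\Hom(\F,G)$ with the set $\mathcal{H}$ of continuous homomorphisms $\widehat{\F}\to G$, and $\theta$ induces a bijection $\theta^{*}\colon\phi\mapsto\phi\circ\theta$ of the finite set $\mathcal{H}$, which is automatically measure-preserving for the uniform measure. Since $\phi(w')=\phi(\theta(w))=(\phi\circ\theta)(w)$ for every $\phi\in\mathcal{H}$, pushing forward the uniform measure under evaluation at $w'$ and at $w$ yields the same distribution on $G$. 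As $G$ was arbitrary, $w'$ and $w$ induce the same measure on every finite group, so Theorem~\ref{thm:main} gives $w'\aute{\F}w$.

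Both arguments are essentially a translation between word measures, finite quotients of $\F$, and $\widehat{\F}$, so I do not expect a genuine obstacle once Theorem~\ref{thm:main} is available; the main point to be careful about is to invoke \emph{universal} profinite rigidity (not merely rigidity in $\F_{1}$ or $\F_{2}$), so that the displayed identity is legitimately available in whichever ambient $\F=\F_{r}$ the competitor word $w'$ is assumed to live in.
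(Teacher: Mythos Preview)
Your proof is correct and follows essentially the same route as the paper. The paper deduces part~(\ref{enu:aut-F-hat equivalence}) from Theorem~\ref{thm:main} via the implication \ref{enu:profinitely rigid}$\Rightarrow$\ref{enu:same measure on finite groups} of Theorem~\ref{thm:six equivalences} (which is exactly your argument with $\theta^{*}$), and deduces part~(\ref{enu:closed in the prof top}) by quoting the general Claim~\ref{claim:the orbit of prof rigid elem is closed in prof top}, whose second proof is precisely your argument that each $S_{G}$ is a union of cosets of a finite-index normal subgroup and hence clopen. The only difference is packaging: the paper isolates these observations as general statements about profinitely rigid elements of an arbitrary finitely generated group, while you prove the needed special cases inline.
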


In fact, Theorem \ref{thm:six equivalences} below shows that profinite
rigidity of $w$ is equivalent to the statement of Corollary \ref{cor:profinite}(\ref{enu:aut-F-hat equivalence}).
Item \ref{enu:profinitely rigid} is a special case of Claim \ref{claim:the orbit of prof rigid elem is closed in prof top}
below.

\subsection*{Reducing Theorem \ref{thm:main} to Theorems \ref{thm:free words inside a word}
and \ref{thm:from word to its powers}}

Theorem \ref{thm:from word to its powers} reduces Theorem \ref{thm:main}
to the primitive case, which is shown in \cite{PP15}, and to the
commutator word $\left[x,y\right]$. For the latter, we use a result
of Khelif:
\begin{thm}
\label{thm:Khelif}\cite{khelif2004finite} If the image of $w\in\F$
in every finite quotient of $\F$ is a commutator, then $w$ is a
commutator, namely, $w=\left[u,v\right]$ for some $u,v\in\F$.
\end{thm}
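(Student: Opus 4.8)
The plan is to recast the statement as the assertion that the set of commutators $K=\{[u,v]\mid u,v\in\F\}$ is closed in the profinite topology of $\F$, and then to prove that. First come the cheap observations: $w$ must lie in $[\F,\F]$ because $\F^{\mathrm{ab}}\cong\mathbb{Z}^{r}$ is residually finite, and the hypothesis that $w$ is a commutator in every finite quotient says exactly that $w$ lies in $\bigcap_{N}KN$, the intersection over all finite-index normal subgroups $N\trianglelefteq\F$, i.e.\ in the profinite closure $\overline{K}$. Since the commutator map $c\colon\F\times\F\to\F$, $(u,v)\mapsto[u,v]$, extends to a continuous map $\widehat{c}\colon\widehat{\F}\times\widehat{\F}\to\widehat{\F}$ defined on a compact space, its image $\widehat{K}$ is compact, hence closed in $\widehat{\F}$, and a routine compactness argument identifies $\overline{K}=\widehat{K}\cap\F$. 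So the theorem is equivalent to the statement that every $w\in\F$ which is a commutator in the profinite completion $\widehat{\F}$ is already a commutator in $\F$. Before attacking this I would record two further reductions: (i) both the hypothesis and the conclusion are invariant under $\Aut\F$ --- precompose a quotient map, respectively a commutator expression, by an automorphism --- so one may take $w$ to be Whitehead-minimal, hence of controlled length; and (ii) in any group, $g$ is a commutator if and only if some element $a$ has $ag$ conjugate to $a$ (from $[a^{-1},v]=a^{-1}\,vav^{-1}$), so, using the classical fact that free groups are conjugacy separable, the same compactness packaging rephrases the hypothesis as: some $\widehat{a}\in\widehat{\F}$ has $\widehat{a}\,w$ conjugate to $\widehat{a}$ in $\widehat{\F}$.

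With the problem reduced to a discrete-versus-profinite solvability question for the equation $[x,y]=w$, the next step is to bring in the combinatorial structure theory of commutators in free groups --- Wicks forms, equivalently genus-one fatgraphs or genus-one van Kampen diagrams with a single boundary cycle --- which, for a cyclically reduced $w$ of length $\ell$, presents ``$w$ is a commutator'' as a finite disjunction: there are finitely many combinatorial \emph{patterns} --- a pairing of the $\ell$ boundary positions into $\ell/2$ chords carrying matching inverse letters, of chord-diagram genus one, together with the finite edge-set of the associated core graph --- and, for each pattern, a bounded system of word equations in finitely many unknowns (the unknowns being conjugators that attach the edges of the graph to the fixed boundary word $w$) whose solutions over $\F$ are exactly the commutator expressions of $w$ of that combinatorial type. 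The crux then splits in two: (a) a profinite commutator expression of $w$ must conform to one of its finitely many patterns --- a statement I would extract from the finiteness plus a compactness argument, or bypass using the reformulation in (ii) --- and (b) for each fixed pattern, solvability over $\widehat{\F}$ implies solvability over $\F$, equivalently the set of $\F$-solutions of the pattern is dense, in the profinite topology on the ambient $\F^{m}$, inside its set of $\widehat{\F}$-solutions. Granting (a) and (b), one chains the implications $w$ a commutator in $\widehat{\F}$ $\Rightarrow$ some pattern of $w$ is $\widehat{\F}$-solvable $\Rightarrow$ it is $\F$-solvable $\Rightarrow$ $w=[u,v]$ in $\F$, completing the proof.

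The main obstacle is item (b): the density/separability input, where essentially all the real work sits. A general finite system of word equations over $\F$ can be solvable over $\widehat{\F}$ while having no solution over $\F$, so one must exploit the special shape of the systems coming from Wicks patterns --- each unknown occurs as a single conjugator tying one edge of a finite graph to the fixed discrete word $w$. I would attack this by invoking the Ribes--Zalesskii theorem --- a product of finitely many finitely generated subgroups of a free group is closed in the profinite topology --- together with conjugacy separability, to argue that the solution locus of each pattern is cut out by conditions of ``product of cosets of finitely generated subgroups'' type, hence profinitely closed, and that the given $\widehat{\F}$-solution is a profinite limit of genuine $\F$-solutions. Carrying this out precisely --- writing each pattern's equations in the exact form to which Ribes--Zalesskii applies, and pinning down the length bounds that keep the list of patterns finite --- is the technical core; by comparison the compactness reductions and the appeal to the structure theory are soft.
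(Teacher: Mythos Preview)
The paper does not prove Theorem~\ref{thm:Khelif}: it is quoted as an external result of Khelif \cite{khelif2004finite} and used as a black box. So there is no ``paper's own proof'' to compare against; your proposal is an attempt to supply a proof of Khelif's theorem itself.

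As a plan, your reduction is sound and standard: the equivalence $\overline{K}=\widehat{K}\cap\F$ is correct, the reformulation via Wicks forms is the right combinatorial handle on commutators in free groups, and the instinct to bring in Ribes--Zalesskii-type separability is appropriate. But what you have written is explicitly a sketch, not a proof, and the two places you flag as ``the crux'' are genuine gaps that you do not close. For item (a), a profinite commutator expression $w=[\hat u,\hat v]$ with $\hat u,\hat v\in\widehat{\F}$ carries no obvious Wicks-type combinatorial pattern: the Wicks classification is a statement about reduced words in the discrete group, and your compactness remark does not by itself force a profinite solution to inhabit one of the finitely many discrete patterns. Your alternative (ii) does not bypass this either: conjugacy separability tells you that two \emph{fixed} elements of $\F$ conjugate in $\widehat{\F}$ are already conjugate in $\F$, but here the witness $\hat a$ lives in $\widehat{\F}$, so you are asking for solvability of an equation with one discrete parameter and one profinite unknown --- a strictly harder statement.

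For item (b), you correctly identify that arbitrary systems of word equations can be $\widehat{\F}$-solvable without being $\F$-solvable, so everything hinges on showing that the particular systems arising from Wicks patterns have solution sets that are finite unions of products of cosets of finitely generated subgroups (the shape to which Ribes--Zalesskii applies). You assert this is the case but do not carry it out; this is precisely the substantive content of Khelif's argument, and it is not routine. In short: the architecture is reasonable and in the spirit of how such results are proved, but the proposal as written stops exactly where the real difficulty begins.
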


Now assume that some word $w'\in\F$ induces the same measures on
finite groups as the commutator $\left[u,v\right]$ for some $u,v\in\F$.
If $w'$ is not a commutator in some finite quotient $Q$ of $\F_{r}$,
then its image $\overline{w'}\in Q$ is in the support of the $w'$-measure
but not in the support of the $\left[u,v\right]$-measure, in contradiction.
Hence $w'$ is a commutator in every finite quotient of $\F_{r}$,
and by Khelif, $w'=\left[u',v'\right]$ for some $u',v'\in\F$. We
obtain the following:
\begin{cor}
\label{cor:from-Khelif} If $w_{1}\in\F$ is a commutator and $w_{2}\in\F$
is not, then $w_{1}$ and $w_{2}$ do \emph{not} induce the same measure
on all finite groups.
\end{cor}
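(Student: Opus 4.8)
The plan is a support comparison combined with Khelif's theorem (Theorem~\ref{thm:Khelif}). First I would record the elementary fact that for any finite group $G$, an element $g\in G$ lies in the support of the $w$-measure on $G$ precisely when there is a homomorphism $\phi\colon\F\to G$ with $\phi(w)=g$: indeed, a substitution tuple $(g_{1},\ldots,g_{r})\in G^{r}$ is the same data as a homomorphism $\phi$ with $\phi(x_{i})=g_{i}$, and then $w(g_{1},\ldots,g_{r})=\phi(w)$; since $G^{r}$ is finite, a point carries positive $w$-measure if and only if it is attained by some such tuple. Consequently, if $w_{1}$ and $w_{2}$ induce the same measure on every finite group, then in particular they have the same support $\{\phi(w_{i})\mid\phi\in\Hom(\F,G)\}$ on every finite group $G$.

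Next I would exploit that $w_{1}$ is a commutator, say $w_{1}=[u,v]$. For every homomorphism $\phi\colon\F\to G$ we have $\phi(w_{1})=[\phi(u),\phi(v)]$, so the support of the $w_{1}$-measure on $G$ consists only of (single) commutators in $G$; by the equality of supports, the same holds for the support of the $w_{2}$-measure on $G$. Now fix an arbitrary finite quotient $q\colon\F\to Q$. Applying the previous sentence with $\phi=q$, the element $q(w_{2})$, which lies in the support of the $w_{2}$-measure on $Q$, must be a commutator in $Q$. As $Q$ ranges over all finite quotients of $\F$, this says exactly that the image of $w_{2}$ in every finite quotient of $\F$ is a commutator.

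Finally I would invoke Khelif's theorem (Theorem~\ref{thm:Khelif}) to conclude that $w_{2}$ itself is a commutator in $\F$, contradicting the hypothesis that $w_{2}$ is not a commutator; hence $w_{1}$ and $w_{2}$ cannot induce the same measure on all finite groups. The only genuinely nontrivial ingredient is Khelif's theorem, which is quoted as a black box; everything else is a routine unwinding of the definition of the word measure and its support, so I do not anticipate a real obstacle beyond stating the support computation cleanly.
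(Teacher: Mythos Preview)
Your proposal is correct and follows essentially the same approach as the paper: both arguments compare supports of word measures to show that the image of $w_{2}$ in every finite quotient is a commutator, and then invoke Khelif's theorem to conclude. The paper phrases the support step contrapositively (if $\overline{w_{2}}$ were not a commutator in some quotient $Q$, it would lie in the support of the $w_{2}$-measure but not of the $w_{1}$-measure), while you phrase it directly, but the content is identical.
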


\label{using thm 1.5 for =00005Bx,y=00005D}Now, assume $w\in\F$
induces the same measures on finite groups as the word $\left[x,y\right]$.
By Khelif's result, $w$ is a commutator, so $w=\left[u,v\right]$
for some $u,v\in\F$. Clearly, $w\ne1$, whence $u$ and $v$ do not
commute and are therefore free. From Theorem \ref{thm:free words inside a word}
applied with $\left[x,y\right]$, it immediately follows that $\left\langle u,v\right\rangle $
is a free factor of rank $2$ of $\F$ and therefore $\left(u,v\right)\auteq\left(x,y\right)$
and $w\auteq\left[x,y\right]$.

We remark that the case of primitive powers follows also from the
combination of Theorem \ref{thm:free words inside a word} and Lubotzky's
Theorem \ref{thm:nth powers are closed} (see below), which yields,
analogously to Corollary \ref{cor:from-Khelif}, that if $w_{1}$
is a $d$th power and $w_{2}$ is not, then $w_{1}$ and $w_{2}$
do not induce the same measure on finite groups. Indeed, if $w$ induces
the same measures on finite groups as $x^{d}$, then by Lubotzky's
theorem, $w=u^{d}$ with $u\ne1$ a non-power. But $\tr_{u^{d}}\left(N\right)=\tr_{x^{d}}\left(N\right)$
for all $N$, so by Theorem \ref{thm:free words inside a word}, $u$
must be primitive. See also Proposition \ref{prop:primitive powers from S_N}
below.

\subsection*{Paper organization}

Section \ref{sec:profinite} contains a short introduction to profinite
topology and to profinite groups, generalizes the notion of profinitely
rigid elements to arbitrary finitely generated groups, and gives several
equivalent notions in Theorem \ref{thm:six equivalences}. In Section
\ref{sec:Fixed-points-of} we prove Theorem \ref{thm:free words inside a word}
about the average number of fixed points in a $w\left(u_{1},\ldots,u_{k}\right)$-random
permutation. Finally, Section \ref{sec:powers} contains the proof
of Theorem \ref{thm:from word to its powers}, thus concluding the
proof of our main theorem, Theorem \ref{thm:main}.

\section{Profinitely rigid elements and equivalent notions\label{sec:profinite}}

Given a basis $x_{1},\ldots,x_{r}$ to $\F$ as above, there is a
natural correspondence 
\[
\Hom\left(\F,G\right)~\longleftrightarrow~\underbrace{G\times\ldots\times G}_{r~\mathrm{times}},
\]
where $\varphi\in\Hom\left(\F,G\right)$ corresponds to the $r$-tuple
$\left(\varphi\left(x_{1}\right),\ldots,\varphi\left(x_{r}\right)\right)$.
In this language, the $w$-measure on $G$ is the distribution of
$\varphi\left(w\right)$ where $\varphi\in\Hom\left(\F,G\right)$
is a uniformly random homomorphism. Assume that $w_{1},w_{2}\in\F$
are in the same $\Aut\F$-orbit, namely, that there exists $\theta\in\Aut\F$
with $\theta\left(w_{1}\right)=w_{2}$. If $\varphi\in\Hom\left(\F,G\right)$
is uniformly random, then so is $\varphi\circ\theta\in\Hom\left(\F,G\right)$.
Clearly, for every fixed homomorphism $\varphi$, we have $\varphi\left(w_{2}\right)=\left(\varphi\circ\theta\right)\left(w_{1}\right)$,
which proves Fact \ref{fact:same orbit then same measure}: $w_{1}$
and $w_{2}$ induce the same measure on every finite group.

In fact, this last observation is relevant not only to finitely generated
free groups, but to arbitrary finitely generated groups. Let $\Gamma$
be a finitely generated group and $G$ some finite group. The set
of homomorphisms $\Hom\left(\Gamma,G\right)$ is finite, and so every
element $\gamma\in\Gamma$ induces a measure on $G$ defined by the
random element $\varphi\left(\gamma\right)$ where $\varphi\colon\Gamma\to G$
is a uniformly random homomorphism. The previous paragraph yields
the following generalization of Fact \ref{fact:same orbit then same measure}:
\begin{claim}
\label{claim:same orbit =00003D=00003D> same measures}Let $\Gamma$
be a finitely generated group and $\gamma_{1},\gamma_{2}\in\Gamma$.
If $\gamma_{1}\aute{\Gamma}\gamma_{2}$ then $\gamma_{1}$ and $\gamma_{2}$
induce the same measure on every finite group.
\end{claim}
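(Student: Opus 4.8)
The plan is to mimic verbatim the argument given just above for free groups; the only additional observation needed is that a bijection of a finite set preserves its uniform probability measure. So assume $\gamma_{1}\aute{\Gamma}\gamma_{2}$ and fix $\theta\in\Aut\Gamma$ with $\theta\left(\gamma_{1}\right)=\gamma_{2}$. Let $G$ be an arbitrary finite group. First I would record that precomposition with $\theta$ defines a map
\[
\Theta\colon\Hom\left(\Gamma,G\right)\longrightarrow\Hom\left(\Gamma,G\right),\qquad\Theta\left(\varphi\right)=\varphi\circ\theta,
\]
and that $\Theta$ is a bijection, its inverse being precomposition with $\theta^{-1}$.

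Next, since $\Gamma$ is finitely generated and $G$ is finite, the set $\Hom\left(\Gamma,G\right)$ is finite, so the uniform probability measure on it is well-defined and the bijection $\Theta$ preserves it. Equivalently, if $\varphi$ is a uniformly random homomorphism $\Gamma\to G$, then $\Theta\left(\varphi\right)=\varphi\circ\theta$ is uniformly random as well.

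Finally, for every fixed $\varphi\in\Hom\left(\Gamma,G\right)$ we have $\left(\varphi\circ\theta\right)\left(\gamma_{1}\right)=\varphi\left(\theta\left(\gamma_{1}\right)\right)=\varphi\left(\gamma_{2}\right)$. Hence the distribution of $\varphi\left(\gamma_{2}\right)$, with $\varphi$ uniform, coincides with the distribution of $\Theta\left(\varphi\right)\left(\gamma_{1}\right)$, which by the previous step equals the distribution of $\psi\left(\gamma_{1}\right)$ for a uniformly random $\psi\in\Hom\left(\Gamma,G\right)$ — that is, the $\gamma_{1}$-measure on $G$. Since $G$ was arbitrary, $\gamma_{1}$ and $\gamma_{2}$ induce the same measure on every finite group. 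I do not expect any genuine obstacle here: the statement is a direct abstraction of Fact \ref{fact:same orbit then same measure}, and the only point worth stating explicitly is the finiteness of $\Hom\left(\Gamma,G\right)$, which is exactly what makes the uniform measure meaningful and $\Theta$ measure-preserving.
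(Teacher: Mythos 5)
Your proposal is correct and is essentially identical to the paper's argument: the paper also observes that precomposition with $\theta$ is a measure-preserving bijection of the finite set $\Hom\left(\Gamma,G\right)$ and that $\left(\varphi\circ\theta\right)\left(\gamma_{1}\right)=\varphi\left(\gamma_{2}\right)$. Nothing further is needed.
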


The study of measures induced on finite groups by words, and more
generally by elements of a finitely generated group $\Gamma$, is
closely related to some aspects of the profinite topology on $\Gamma$
and of its profinite completion. The standard references to the theory
of profinite groups are the books \cite{wilson1998profinite,ribes2010profinite}.
Let us give here some basic definitions and facts.

The profinite topology on $\Gamma$ is defined by the basis of (say,
left) cosets of subgroups of finite index. The profinite completion
of $\Gamma$, denoted $\hat{\Gamma}$\marginpar{$\hat{\Gamma}$},
is the inverse limit 
\begin{equation}
\varprojlim_{N\trianglelefteq_{\mathrm{f.i.}}\Gamma}\nicefrac{\Gamma}{N},\label{eq:inverse limit}
\end{equation}
where $N$ runs over all normal subgroups of finite index in $\Gamma$.
This is a (Hausdorff, compact, totally disconnected) topological group.
There is a natural homomorphism $\iota\colon\Gamma\to\hat{\Gamma}$
defined by mapping $\gamma\in\Gamma$ to the element $\left(\gamma N\right)_{N\trianglelefteq{}_{\mathrm{f.i.}}\Gamma}$
in the inverse limit (\ref{eq:inverse limit}). The homomorphism $\iota$
is injective if and only if $\Gamma$ is residually finite. The image
of $\Gamma$ is dense in $\hat{\Gamma}$ \cite[Lemma 3.2.1]{ribes2010profinite}.

By definition, a homomorphism from $\hat{\Gamma}$ to a finite group
is assumed to be continuous, and the sets $\Hom\left(\hat{\Gamma},G\right)$
and $\Epi\left(\hat{\Gamma},G\right)$ are the sets of \emph{continuous
}homomorphisms and epimorphisms, respectively, from $\hat{\Gamma}$
to the finite group $G$. For every finite group $G$, there is a
one-to-one correspondence between $\Hom\left(\hat{\Gamma},G\right)$
and $\Hom\left(\Gamma,G\right)$, given by $\psi\mapsto\psi\circ\iota$:
this is due to the universal property of the profinite completion
of a group, namely, for every $\varphi\in\Hom\left(\Gamma,G\right)$
there is a unique $\hat{\varphi}\in\Hom\left(\hat{\Gamma},G\right)$
such that $\varphi=\hat{\varphi}\circ\iota$ (see \cite[Lemma 3.2.1]{ribes2010profinite}).
Similarly, there is a one-to-one correspondence between $\Epi\left(\hat{\Gamma},G\right)$
and $\Epi\left(\Gamma,G\right)$.

Let $\gamma\in\Gamma$ and $g\in G$ where $\Gamma$ is finitely generated
and $G$ finite. Define
\begin{eqnarray}
\Hom_{\gamma,g}\left(\Gamma,G\right) & \stackrel{\mathrm{def}}{=} & \left\{ \varphi\in\Hom\left(\Gamma,G\right)\,\middle|\,\varphi\left(\gamma\right)=g\right\} ,\nonumber \\
\mathrm{Epi}_{\gamma,g}\left(\Gamma,G\right) & \stackrel{\mathrm{def}}{=} & \left\{ \varphi\in\mathrm{Epi}\left(\Gamma,G\right)\,\middle|\,\varphi\left(\gamma\right)=g\right\} ,\nonumber \\
\mathrm{EpiIm}_{\gamma}\left(\Gamma,G\right) & \stackrel{\mathrm{def}}{=} & \left\{ \varphi\left(\gamma\right)\,\middle|\,\varphi\in\Epi\left(\Gamma,G\right)\right\} ,~\mathrm{and}\nonumber \\
K_{\Gamma}\left(G\right) & \stackrel{\mathrm{def}}{=} & \bigcap_{N\trianglelefteq\Gamma,\nicefrac{\Gamma}{N}\cong G}N,\label{eq:def of K_Gamma_G}
\end{eqnarray}
(if $G$ is not a quotient of $\Gamma$, define $K_{\Gamma}\left(G\right)\stackrel{\mathrm{def}}{=}\Gamma$).
Note that $K_{\Gamma}\left(G\right)$ is a characteristic finite index
subgroup of $\Gamma$, as there are finitely many normal subgroups
$N\trianglelefteq\Gamma$ with quotient $G$, and as every automorphism
of $\Gamma$ permutes these normal subgroups.

The following theorem gives a number of equivalent definitions to
a relation between different elements of $\Gamma$. The automorphism
group $\Aut\hat{\Gamma}$ is the group of all \emph{continuous }automorphisms
of $\hat{\Gamma}$.
\begin{thm}
\label{thm:six equivalences}Let $\Gamma$ be a finitely generated
group, and $\gamma_{1},\gamma_{2}\in\Gamma$. Then the following six
properties are equivalent:
\begin{enumerate}
\item \label{enu:profinitely rigid}$\gamma_{1}\aute{\hat{\Gamma}}\gamma_{2}$,
namely, there exists an automorphism $\theta\in\Aut\hat{\Gamma}$
with $\theta\left(\iota\left(\gamma_{1}\right)\right)=\iota\left(\gamma_{2}\right)$.
\item \label{enu:same measure on finite groups}$\left|\Hom{}_{\gamma_{1},g}\left(\Gamma,G\right)\right|=\left|\Hom_{\gamma_{2},g}\left(\Gamma,G\right)\right|$
for every finite group $G$ and every $g\in G$, namely, $\gamma_{1}$
and $\gamma_{2}$ induce the same measure on every finite group.
\item \label{enu:same measures of epimorphisms} $\left|\mathrm{Epi}{}_{\gamma_{1},g}\left(\Gamma,G\right)\right|=\left|\mathrm{Epi}_{\gamma_{2},g}\left(\Gamma,G\right)\right|$
for every finite group $G$ and every $g\in G$.
\item \label{enu:same image in epimorphisms} $\mathrm{Epi\mathrm{Im}}{}_{\gamma_{1}}\left(\Gamma,G\right)=\mathrm{Epi}\mathrm{Im}_{\gamma_{2}}\left(\Gamma,G\right)$
for every finite group $G$, namely, $\gamma_{1}$ and $\gamma_{2}$
have the same possible images under epimorphisms to finite groups.
\item \label{enu:equiv in every special characteristic quotient} $\gamma_{1}K\aute{\left(\nicefrac{\Gamma}{K}\right)}\gamma_{2}K$
with $K=K_{\Gamma}\left(G\right)$ for every finite group $G$.
\item \label{enu:equiv in every large enough quotient}For every $N\triangleleft_{\mathrm{f.i.}}\Gamma$
there exists $K\trianglelefteq_{\mathrm{f.i.}}\Gamma$ with $K\le N$
such that $\gamma_{1}K\aute{\left(\nicefrac{\Gamma}{K}\right)}\gamma_{2}K$.
\end{enumerate}
\end{thm}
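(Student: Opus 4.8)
The plan is to establish the cyclic chain of implications $(\ref{enu:profinitely rigid})\Rightarrow(\ref{enu:same measure on finite groups})\Rightarrow(\ref{enu:same measures of epimorphisms})\Rightarrow(\ref{enu:same image in epimorphisms})\Rightarrow(\ref{enu:equiv in every special characteristic quotient})\Rightarrow(\ref{enu:equiv in every large enough quotient})\Rightarrow(\ref{enu:profinitely rigid})$, which gives all six equivalences at once. The first three arrows are soft. For $(\ref{enu:profinitely rigid})\Rightarrow(\ref{enu:same measure on finite groups})$ I would use that $\varphi\mapsto\varphi\circ\iota$ is a bijection $\Hom(\hat{\Gamma},G)\to\Hom(\Gamma,G)$ for every finite $G$, and that precomposition with a fixed $\theta\in\Aut\hat{\Gamma}$ permutes $\Hom(\hat{\Gamma},G)$ and carries $\{\varphi:\varphi(\iota\gamma_1)=g\}$ bijectively onto $\{\varphi:\varphi(\iota\gamma_2)=g\}$; the reformulation via measures is immediate, the $\gamma$-measure of $\{g\}$ being $|\Hom_{\gamma,g}(\Gamma,G)|/|\Hom(\Gamma,G)|$. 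For $(\ref{enu:same measure on finite groups})\Rightarrow(\ref{enu:same measures of epimorphisms})$ I would partition a homomorphism according to its image, $\Hom_{\gamma,g}(\Gamma,G)=\bigsqcup_{H\le G,\,g\in H}\Epi_{\gamma,g}(\Gamma,H)$, obtain $|\Hom_{\gamma,g}(\Gamma,G)|=\sum_{H\le G,\,g\in H}|\Epi_{\gamma,g}(\Gamma,H)|$, and invert this relation by induction on $|G|$. For $(\ref{enu:same measures of epimorphisms})\Rightarrow(\ref{enu:same image in epimorphisms})$, simply note $g\in\mathrm{EpiIm}_\gamma(\Gamma,G)$ iff $|\Epi_{\gamma,g}(\Gamma,G)|>0$.

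The core is $(\ref{enu:same image in epimorphisms})\Rightarrow(\ref{enu:equiv in every special characteristic quotient})$, which relies on the lemma that $K_\Gamma(\Gamma/K_\Gamma(G))=K_\Gamma(G)$ for every finite group $G$. When $G$ is a quotient of $\Gamma$, write $Q=\Gamma/K_\Gamma(G)$ and recall $K_\Gamma(G)=\bigcap_{i=1}^t N_i$ for the finitely many $N_i\trianglelefteq\Gamma$ with $\Gamma/N_i\cong G$; then $Q$ sits subdirectly inside $G^t$, so any $N\trianglelefteq\Gamma$ with $\Gamma/N\cong Q$ equals the intersection of the kernels of the $t$ induced surjections $\Gamma\twoheadrightarrow G$ and hence contains $K_\Gamma(G)$, giving $K_\Gamma(Q)=K_\Gamma(G)$; if $G$ is not a quotient of $\Gamma$ then $K_\Gamma(G)=\Gamma$ and the assertion of $(\ref{enu:equiv in every special characteristic quotient})$ for that $G$ is vacuous. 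Granting the lemma, fix $G$, put $K=K_\Gamma(G)$ and $Q=\Gamma/K$, and apply $(\ref{enu:same image in epimorphisms})$ to the finite group $Q$: the quotient map $\pi\colon\Gamma\twoheadrightarrow Q$ shows $\gamma_1K=\pi(\gamma_1)\in\mathrm{EpiIm}_{\gamma_1}(\Gamma,Q)=\mathrm{EpiIm}_{\gamma_2}(\Gamma,Q)$, so some $\psi\in\Epi(\Gamma,Q)$ has $\psi(\gamma_2)=\gamma_1K$. Since $\Gamma/\ker\psi\cong Q$ the lemma forces $\ker\psi\supseteq K$, and comparing indices gives $\ker\psi=K$, so $\psi$ factors through $\pi$ as an automorphism $\bar\psi$ of $\Gamma/K$ with $\bar\psi(\gamma_2K)=\gamma_1K$, and $\theta=\bar\psi^{-1}$ works. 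The next arrow $(\ref{enu:equiv in every special characteristic quotient})\Rightarrow(\ref{enu:equiv in every large enough quotient})$ is immediate: given $N\trianglelefteq_{\mathrm{f.i.}}\Gamma$ take $G=\Gamma/N$, so $K_\Gamma(G)\le N$, and invoke $(\ref{enu:equiv in every special characteristic quotient})$ for this $G$.

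Finally, $(\ref{enu:equiv in every large enough quotient})\Rightarrow(\ref{enu:profinitely rigid})$ is a compactness argument. For $K\trianglelefteq_{\mathrm{f.i.}}\Gamma$ set $S_K=\{\psi\in\Epi(\Gamma,\Gamma/K):\psi(\gamma_1)=\gamma_2K\}$, a finite set. It is nonempty: $(\ref{enu:equiv in every large enough quotient})$ with $N=K$ yields $K'\trianglelefteq_{\mathrm{f.i.}}\Gamma$ with $K'\le K$ and an automorphism $\theta'$ of $\Gamma/K'$ taking $\gamma_1K'$ to $\gamma_2K'$, and then $\Gamma\twoheadrightarrow\Gamma/K'\xrightarrow{\theta'}\Gamma/K'\twoheadrightarrow\Gamma/K$ lies in $S_K$. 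With the maps induced by $\Gamma/K'\twoheadrightarrow\Gamma/K$ (for $K'\le K$) the sets $S_K$ form an inverse system of nonempty finite sets over the directed poset of finite-index normal subgroups of $\Gamma$, so $\varprojlim_K S_K$ is nonempty; a coherent family $(\psi_K)_K$ in it defines a homomorphism $\Gamma\to\hat{\Gamma}=\varprojlim_K\Gamma/K$, hence by the universal property of the profinite completion a continuous endomorphism $\hat{\psi}$ of $\hat{\Gamma}$ with $\hat{\psi}(\iota\gamma_1)=\iota\gamma_2$; moreover $\hat{\psi}$ is surjective since each $\psi_K$ is surjective and $\hat{\Gamma}$ is compact. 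As $\Gamma$ is finitely generated, $\hat{\Gamma}$ is topologically finitely generated and therefore Hopfian -- every continuous epimorphism of $\hat{\Gamma}$ onto itself is an automorphism \cite{ribes2010profinite} -- so $\hat{\psi}\in\Aut\hat{\Gamma}$ and $(\ref{enu:profinitely rigid})$ follows.

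I expect the two steps that cost real work to be $(\ref{enu:same image in epimorphisms})\Rightarrow(\ref{enu:equiv in every special characteristic quotient})$, where the $K_\Gamma$-lemma must be set up carefully, and $(\ref{enu:equiv in every large enough quotient})\Rightarrow(\ref{enu:profinitely rigid})$, where one must make the inverse system precise and cite the Hopf property of finitely generated profinite groups; a Hopf-free variant of the last step instead derives $(\ref{enu:equiv in every special characteristic quotient})$ from $(\ref{enu:equiv in every large enough quotient})$ (again via the $K_\Gamma$-lemma) and then assembles honest automorphisms of the finite quotients $\Gamma/K_\Gamma(G)$ into an automorphism of $\hat{\Gamma}$, using that $K_\Gamma(H)/K_\Gamma(G)$ is characteristic in $\Gamma/K_\Gamma(G)$ whenever $K_\Gamma(G)\le K_\Gamma(H)$. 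By contrast the measure-theoretic equivalences $(\ref{enu:same measure on finite groups})\Leftrightarrow(\ref{enu:same measures of epimorphisms})\Rightarrow(\ref{enu:same image in epimorphisms})$ are routine inclusion--exclusion bookkeeping.
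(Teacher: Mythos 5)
Your proposal is correct, and for the first four implications ((\ref{enu:profinitely rigid})$\Rightarrow$(\ref{enu:same measure on finite groups})$\Rightarrow$(\ref{enu:same measures of epimorphisms})$\Rightarrow$(\ref{enu:same image in epimorphisms})$\Rightarrow$(\ref{enu:equiv in every special characteristic quotient})) it matches the paper's proof essentially step for step; in particular your subdirect-product proof that $K_\Gamma\left(\nicefrac{\Gamma}{K_\Gamma(G)}\right)=K_\Gamma(G)$ is just a repackaging of the paper's counting argument that any $f\in\Epi\left(\Gamma,\nicefrac{\Gamma}{K}\right)$ has $\ker f=K$. Where you genuinely diverge is in closing the cycle. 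The paper proves both directions of (\ref{enu:equiv in every special characteristic quotient})$\Leftrightarrow$(\ref{enu:equiv in every large enough quotient}) and then deduces (\ref{enu:profinitely rigid}) from (\ref{enu:equiv in every special characteristic quotient}) by a compactness argument on the nonempty finite sets $\Aut_{\gamma_1,\gamma_2}\left(\nicefrac{\Gamma}{K}\right)$, which requires Lemma \ref{lem:Aut(hat(G)) as inverse limit} identifying $\Aut\hat{\Gamma}$ with $\varprojlim_K\Aut\left(\nicefrac{\Gamma}{K}\right)$ over the characteristic subgroups $K=K_\Gamma(G)$. You instead run compactness on the sets $S_K$ of epimorphisms $\Gamma\twoheadrightarrow\nicefrac{\Gamma}{K}$ carrying $\gamma_1$ to $\gamma_2K$, obtain a continuous epimorphism $\hat\psi\colon\hat{\Gamma}\twoheadrightarrow\hat{\Gamma}$ with $\hat\psi(\iota\gamma_1)=\iota\gamma_2$, and promote it to an automorphism via the Hopf property of topologically finitely generated profinite groups (\cite[Proposition 2.5.2]{ribes2010profinite}). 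Both closures are sound: yours trades the inverse-limit description of $\Aut\hat{\Gamma}$ for the Hopfian citation, and has the small advantage of working directly from item (\ref{enu:equiv in every large enough quotient}) without first upgrading to the characteristic quotients of item (\ref{enu:equiv in every special characteristic quotient}); the paper's route has the advantage that Lemma \ref{lem:Aut(hat(G)) as inverse limit} is reused elsewhere (e.g.\ in Claim \ref{claim:the orbit of prof rigid elem is closed in prof top}). Your ``Hopf-free variant'' sketched at the end is essentially the paper's argument.
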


This theorem explains why the following definition generalizes Definition
\ref{def:profinitely rigid free groups}.
\begin{defn}
\label{def:automorphicly rigid - general}Let $\Gamma$ be a finitely
generated group. An element $\gamma\in\Gamma$ is called \textbf{profinitely
rigid }if whenever $\gamma'\in\Gamma$ satisfies $\gamma\aute{\hat{\Gamma}}\gamma'$
then $\gamma\aute{\Gamma}\gamma'$.
\end{defn}

Note that by Claim \ref{claim:same orbit =00003D=00003D> same measures},
the six equivalences in Theorem \ref{thm:six equivalences}, including
$\gamma_{1}\aute{\hat{\Gamma}}\gamma_{2}$, are all a consequence
of $\gamma_{1}\aute{\Gamma}\gamma_{2}$. It is natural to generalize
Conjecture \ref{conj:shalev} and ask which finitely generated groups
have the property that each of their elements is profinitely rigid.
This property holds trivially for finite groups and easily for finitely
generated abelian groups (see, for instance, \cite[Theorem 5.2]{Collins2020automorphism-invariant},
for the case of $\mathbb{Z}^{r}$). As mentioned above, this property
is (much) stronger then residually finiteness. 

The proof of Theorem \ref{thm:six equivalences} relies on the following
lemma, which is a cousin of \cite[Proposition 4.4.3]{ribes2010profinite}:
\begin{lem}
\label{lem:Aut(hat(G)) as inverse limit}Let $\Gamma$ be finitely
generated. Then 
\begin{equation}
\Aut\hat{\Gamma}\cong\varprojlim_{K}\Aut\left(\nicefrac{\Gamma}{K}\right),\label{eq:iso of Aut(profinite)}
\end{equation}
where the inverse limit is taken over all subgroups $K\le\Gamma$
such that $K=K_{\Gamma}\left(G\right)$ (defined in (\ref{eq:def of K_Gamma_G})
above) for some finite group $G$, with arrows $K_{1}\to K_{2}$ whenever
$K_{1}\le K_{2}$.
\end{lem}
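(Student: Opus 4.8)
The plan is to build an explicit map $\Phi\colon\Aut\hat{\Gamma}\to\varprojlim_{K}\Aut\left(\nicefrac{\Gamma}{K}\right)$ and check it is a bijective homomorphism (indeed a topological isomorphism of profinite groups, although the bare group isomorphism is what is needed downstream). First, though, I would make sure the right-hand side is legitimately defined, i.e.\ that the arrow attached to an inclusion $K_{1}\le K_{2}$ of two subgroups of the form $K_{i}=K_{\Gamma}(G_{i})$ really is a homomorphism $\Aut\left(\nicefrac{\Gamma}{K_{1}}\right)\to\Aut\left(\nicefrac{\Gamma}{K_{2}}\right)$. This holds because once $K_{1}\le K_{2}$, every normal subgroup $N\trianglelefteq\Gamma$ with $\nicefrac{\Gamma}{N}\cong G_{2}$ contains $K_{1}$ (as $K_{1}\le K_{2}\le N$), so $\nicefrac{K_{2}}{K_{1}}$ is exactly $K_{\nicefrac{\Gamma}{K_{1}}}(G_{2})$, hence characteristic in $\nicefrac{\Gamma}{K_{1}}$ by the same finitely-many-normal-subgroups argument used for $K_{\Gamma}(G)$ itself; thus any automorphism of $\nicefrac{\Gamma}{K_{1}}$ descends to $\nicefrac{\Gamma}{K_{2}}=(\nicefrac{\Gamma}{K_{1}})/(\nicefrac{K_{2}}{K_{1}})$, and a surjective endomorphism of a finite group is an automorphism.

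Next I would record two preliminary facts. (i) The subgroups $K_{\Gamma}(G)$ are cofinal among all finite-index normal subgroups of $\Gamma$ (given $N\trianglelefteq_{\mathrm{f.i.}}\Gamma$, put $G=\nicefrac{\Gamma}{N}$ and then $K_{\Gamma}(G)\le N$); consequently $\hat{\Gamma}\cong\varprojlim_{K}\nicefrac{\Gamma}{K}$ over precisely the index set of the lemma, the open normal subgroups $\overline{K}:=\ker\!\left(\hat{\Gamma}\to\nicefrac{\Gamma}{K}\right)$ form a neighbourhood basis of $1$, and $\bigcap_{K}\overline{K}=1$. (ii) Each $\overline{K_{\Gamma}(G)}$ is \emph{characteristic} in $\hat{\Gamma}$. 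For this I would use the standard bijection $N\leftrightarrow\overline{N}$ between finite-index normal subgroups of $\Gamma$ and open normal subgroups of $\hat{\Gamma}$ (both matched with $\Epi$'s onto finite groups via the universal property) to see that $\overline{K_{\Gamma}(G)}=\bigcap\{U\trianglelefteq\hat{\Gamma}\text{ open}\mid\nicefrac{\hat{\Gamma}}{U}\cong G\}$; since $\Epi(\hat{\Gamma},G)\cong\Epi(\Gamma,G)$ is finite there are only finitely many such $U$, so a continuous automorphism of $\hat{\Gamma}$ permutes them and fixes their intersection.

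With these in place I can run $\Phi$. A continuous $\theta\in\Aut\hat{\Gamma}$ preserves every $\overline{K}$, hence descends to $\overline{\theta}_{K}\in\Aut\!\left(\nicefrac{\hat{\Gamma}}{\overline{K}}\right)=\Aut\!\left(\nicefrac{\Gamma}{K}\right)$, and the family $\left(\overline{\theta}_{K}\right)_{K}$ is compatible with the transition maps (both sides compute the descent of $\theta$ modulo $\overline{K_{2}}$), so $\Phi(\theta):=\left(\overline{\theta}_{K}\right)_{K}$ lies in the inverse limit; $\Phi$ is visibly a homomorphism. Injectivity: if every $\overline{\theta}_{K}=\id$ then $\theta(g)g^{-1}\in\bigcap_{K}\overline{K}=1$ for all $g\in\hat{\Gamma}$. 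Surjectivity: a compatible family $\left(\alpha_{K}\right)_{K}$ is precisely an endomorphism of the inverse system $\left(\nicefrac{\Gamma}{K}\right)_{K}$, so it induces a continuous endomorphism $\theta$ of $\hat{\Gamma}=\varprojlim_{K}\nicefrac{\Gamma}{K}$; the family $\left(\alpha_{K}^{-1}\right)_{K}$ is compatible for the same algebraic reason and induces a two-sided continuous inverse, so $\theta\in\Aut\hat{\Gamma}$ and $\Phi(\theta)=\left(\alpha_{K}\right)_{K}$ by construction.

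The step I expect to require the most care is the translation of ``$K_{\Gamma}(G)$ is characteristic in $\Gamma$'' into ``$\overline{K_{\Gamma}(G)}$ is characteristic in $\hat{\Gamma}$'', together with the bookkeeping in the first paragraph that makes the right-hand inverse limit well defined; getting the correspondence between finite-index subgroups of $\Gamma$ and open subgroups of $\hat{\Gamma}$ exactly right is where a sloppy argument could go wrong. Everything else — descending an automorphism through a characteristic quotient, pushing a morphism of an inverse system to the limit, and ``surjective endomorphism of a finite group $=$ automorphism'' — is routine, and, as with \cite[Proposition 4.4.3]{ribes2010profinite}, none of it uses residual finiteness of $\Gamma$: $\iota$ may fail to be injective without affecting any step.
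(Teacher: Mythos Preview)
Your proposal is correct and follows essentially the same route as the paper's proof: you verify the inverse system is well defined by showing $K_{2}/K_{1}=K_{\Gamma/K_{1}}(G_{2})$ is characteristic, translate to $\hat{\Gamma}$ by showing $\overline{K_{\Gamma}(G)}=K_{\hat{\Gamma}}(G)$ is characteristic there, build the map by descending automorphisms, and check injectivity via $\bigcap_{K}\overline{K}=1$ and surjectivity by lifting a compatible family to an automorphism of $\varprojlim_{K}\Gamma/K\cong\hat{\Gamma}$. The paper additionally spells out directedness of the index set (for $K_{1},K_{2}$ take $G_{3}=\Gamma/(K_{1}\cap K_{2})$), which you get implicitly from cofinality, but otherwise the arguments coincide.
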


\begin{proof}
Assume that $K_{\Gamma}\left(G_{1}\right)=K_{1}\le K_{2}=K_{\Gamma}\left(G_{2}\right)$,
and denote $Q_{i}=\nicefrac{\Gamma}{K_{i}}$ for $i=1,2$. The image
of $K_{2}$ in $Q_{1}$ is equal to $K_{Q_{1}}\left(G_{2}\right)$,
whence this image is characteristic in $Q_{1}$. Therefore, there
is a well-defined homomorphism $\Aut\left(Q_{1}\right)\to\Aut\left(Q_{2}\right)$.
In addition, for every $K_{1}=K_{\Gamma}\left(G_{1}\right)$ and $K_{2}=K_{\Gamma}\left(G_{2}\right)$,
define $G_{3}=\nicefrac{\Gamma}{K_{1}\cap K_{2}}$ and then $K_{3}=K_{\Gamma}\left(G_{3}\right)\le K_{1},K_{2}$.
Therefore the right hand side of (\ref{eq:iso of Aut(profinite)})
is a well-defined inverse system.

By \cite[Proposition 3.2.2]{ribes2010profinite}, for $K\trianglelefteq_{\mathrm{f.i.}}\Gamma$
we have $\nicefrac{\Gamma}{K}\cong\nicefrac{\hat{\Gamma}}{\overline{\iota\left(K\right)}}$,
and $\iota\left(K_{\Gamma}\left(G\right)\right)=K_{\hat{\Gamma}}\left(G\right)$.
It is therefore enough to show that
\begin{equation}
\Aut\hat{\Gamma}\cong\varprojlim_{K}\Aut\left(\nicefrac{\hat{\Gamma}}{K}\right),\label{eq:iso of Aut(profinite)-1}
\end{equation}
where the inverse limit runs over all subgroups $K\le\hat{\Gamma}$
such that $K=K_{\hat{\Gamma}}\left(G\right)$ for some finite group
$G$. As $K=K_{\hat{\Gamma}}\left(G\right)$ is characteristic in
$\hat{\Gamma}$, every automorphism of $\hat{\Gamma}$ induces an
automorphism of $\nicefrac{\hat{\Gamma}}{K}$ which agrees with the
inverse system, so there is a natural continuous homomorphism 
\[
\omega\colon\Aut\hat{\Gamma}\to\varprojlim_{K}\Aut\left(\nicefrac{\hat{\Gamma}}{K}\right).
\]
The map $\omega$ is injective because $\bigcap_{G~\mathrm{finite}}K_{\hat{\Gamma}}\left(G\right)=\left\{ e_{\hat{\Gamma}}\right\} $.
The map $\omega$ is surjective because every element of the inverse
system $\varprojlim_{K}\Aut\left(\nicefrac{\hat{\Gamma}}{K}\right)$
defines a continuous automorphism of $\varprojlim_{K}\nicefrac{\hat{\Gamma}}{K}\cong\hat{\Gamma}$.
\end{proof}

\begin{proof}[Proof of Theorem \ref{thm:six equivalences}]
~\\
\textbf{The implication \ref{enu:profinitely rigid}$\Longrightarrow$\ref{enu:same measure on finite groups}:}
Notice that by the one-to-one correspondence mentioned above between\linebreak{}
$\Hom\left(\Gamma,G\right)$ and $\Hom\left(\hat{\Gamma},G\right)$,
we have $\left|\Hom_{\gamma,g}\left(\Gamma,G\right)\right|=\left|\Hom_{\iota\left(\gamma\right),g}\left(\hat{\Gamma},G\right)\right|$.
Hence

\[
\left|\Hom_{\gamma_{1},g}\left(\Gamma,G\right)\right|=\left|\Hom_{\iota\left(\gamma_{1}\right),g}\left(\hat{\Gamma},G\right)\right|=\left|\Hom_{\iota\left(\gamma_{2}\right),g}\left(\hat{\Gamma},G\right)\right|=\left|\Hom_{\gamma_{2},g}\left(\Gamma,G\right)\right|,
\]
where the proof of the middle equality is identical to the proof of
Claim \ref{claim:same orbit =00003D=00003D> same measures}.

\noindent \textbf{The equivalence \ref{enu:same measure on finite groups}$\Longleftrightarrow$\ref{enu:same measures of epimorphisms}}
goes by induction on the cardinality of the finite group $G$, as
the case $\left|G\right|=1$ is trivial, and
\[
\Hom_{\gamma,g}\left(\Gamma,G\right)=\bigsqcup_{H\le G\colon g\in H}\Epi_{\gamma,g}\left(\Gamma,H\right).
\]

\noindent \textbf{The implication \ref{enu:same measures of epimorphisms}$\Longrightarrow$\ref{enu:same image in epimorphisms}}
is evident: $g\in\mathrm{EpiIm}_{\gamma}\left(\Gamma,G\right)$ if
and only if $\left|\Epi_{\gamma,g}\left(\Gamma,G\right)\right|>0$.

\noindent \textbf{The implication \ref{enu:same image in epimorphisms}$\Longrightarrow$\ref{enu:equiv in every special characteristic quotient}:}
Let $K=K_{\Gamma}\left(G\right)$ for some finite group $G$. As $\Epi\left(\Gamma,\nicefrac{\Gamma}{K}\right)\ne\emptyset$,
also\linebreak{}
$\Epi\mathrm{Im}_{\gamma_{1}}\left(\Gamma,\nicefrac{\Gamma}{K}\right)=\Epi\mathrm{Im}_{\gamma_{2}}\left(\Gamma,\nicefrac{\Gamma}{K}\right)\ne\emptyset$.
Choose an arbitrary $q\in\mathrm{EpiIm}_{\gamma_{1}}\left(\Gamma,\nicefrac{\Gamma}{K}\right)$,
and for $i=1,2$ let $f_{i}\in\Epi_{\gamma_{i},q}\left(\Gamma,\nicefrac{\Gamma}{K}\right)$.
Clearly, $\nicefrac{\Gamma}{\ker f_{i}}\cong\nicefrac{\Gamma}{K}$,
but we claim that $\ker f_{i}=K$. Indeed, the number of normal subgroups
in $\nicefrac{\Gamma}{\ker f_{i}}$ with quotient $G$ is the same
as in $\nicefrac{\Gamma}{K}$ and thus the same as in $\Gamma$. Hence
$\ker f_{i}$ is contained in every $N\trianglelefteq\Gamma$ with
$\nicefrac{\Gamma}{N}\cong G$, and so $\ker f_{i}\le K$, but $\left[\Gamma:\ker f_{i}\right]=\left[\Gamma:K\right]$,
whence $\ker f_{i}=K$. We deduce that $f_{i}$ induces an automorphism
$\overline{f_{i}}\in\mathrm{Aut}\left(\nicefrac{\Gamma}{K}\right)$,
and $\overline{f_{2}}^{-1}\circ\overline{f_{1}}$ is an automorphism
mapping $\gamma_{1}K$ to $\gamma_{2}K$.

\noindent \textbf{The equivalence \ref{enu:equiv in every special characteristic quotient}$\Longleftrightarrow$\ref{enu:equiv in every large enough quotient}:
}Assume first that $\gamma_{1}K\aute{\left(\nicefrac{\Gamma}{K}\right)}\gamma_{2}K$
for every $K=K_{\Gamma}\left(G\right)$. If $N\trianglelefteq_{\mathrm{f.i.}}\Gamma$,
then $K=K_{\Gamma}\left(\nicefrac{\Gamma}{N}\right)$ would work for
\ref{enu:equiv in every large enough quotient}. Conversely, assume
item \ref{enu:equiv in every large enough quotient} holds. Let $K=K_{\Gamma}\left(G\right)$
for some finite group $G$. By assumption, there exists a subgroup
$K'\le K$ such that $K'\trianglelefteq_{\mathrm{f.i.}}\Gamma$ and
$\gamma_{1}K'\aute{\left(\nicefrac{\Gamma}{K'}\right)}\gamma_{2}K'$.
But the image of $K$ in $Q'=\text{\ensuremath{\nicefrac{\Gamma}{K'}}}$
is precisely $K_{Q'}\left(G\right)$, so this image is characteristic,
and so every automorphism of $Q'=\nicefrac{\Gamma}{K'}$ induces an
automorphism of $\nicefrac{\Gamma}{K}$. Hence $\gamma_{1}K\aute{\left(\nicefrac{\Gamma}{K}\right)}\gamma_{2}K$.

\noindent \textbf{The implication \ref{enu:equiv in every special characteristic quotient}$\Longrightarrow$\ref{enu:profinitely rigid}:
}By assumption, for every $K$ in the inverse system in (\ref{eq:iso of Aut(profinite)}),
the subset\linebreak{}
$\mathrm{\Aut_{\gamma_{1},\gamma_{2}}\left(\nicefrac{\Gamma}{K}\right)\subseteq\Aut\left(\nicefrac{\Gamma}{K}\right)}$
of automorphisms mapping $\gamma_{1}$ to $\gamma_{2}$ is not empty.
By a standard compactness argument, there is an element in $\varprojlim_{K}\Aut\left(\nicefrac{\Gamma}{K}\right)$
mapping $\gamma_{1}K$ to $\gamma_{2}K$ for every $K=K_{\Gamma}\left(G\right)$.
We are done by Lemma \ref{lem:Aut(hat(G)) as inverse limit}.
\end{proof}
The following property of profinitely rigid elements is a generalization
of Corollary \ref{cor:profinite}(\ref{enu:closed in the prof top}):
\begin{claim}
\label{claim:the orbit of prof rigid elem is closed in prof top}Let
$\Gamma$ be a finitely generated group and $\gamma\in\Gamma$ a profinitely
rigid element. Then the orbit $\mathrm{Aut}\Gamma$$\left(\gamma\right)$
is closed in the profinite topology on $\Gamma$.
\end{claim}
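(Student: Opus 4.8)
The plan is to show that the profinite closure of the orbit $\mathrm{Aut}\Gamma\left(\gamma\right)$ equals $\mathrm{Aut}\Gamma\left(\gamma\right)$ itself. Since the cosets of finite-index normal subgroups form a basis for the profinite topology, the closure of any subset $S\subseteq\Gamma$ is $\overline{S}=\bigcap_{N\trianglelefteq_{\mathrm{f.i.}}\Gamma}SN$. Moreover, the subgroups of the form $K=K_{\Gamma}\left(G\right)$, $G$ finite, are cofinal among finite-index normal subgroups — indeed $K_{\Gamma}\left(\nicefrac{\Gamma}{N}\right)\le N$ for every $N\trianglelefteq_{\mathrm{f.i.}}\Gamma$ — so one may as well intersect only over these: $\overline{S}=\bigcap_{G~\mathrm{finite}}S\cdot K_{\Gamma}\left(G\right)$.

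Next I would take an arbitrary $\gamma'\in\overline{\mathrm{Aut}\Gamma\left(\gamma\right)}$ and, for each finite group $G$, set $K=K_{\Gamma}\left(G\right)$. By the previous paragraph $\gamma'\in\mathrm{Aut}\Gamma\left(\gamma\right)\cdot K$, so there is $\theta\in\Aut\Gamma$ with $\theta\left(\gamma\right)K=\gamma'K$. Since $K$ is characteristic in $\Gamma$, the automorphism $\theta$ induces an automorphism $\overline{\theta}\in\Aut\left(\nicefrac{\Gamma}{K}\right)$ carrying $\gamma K$ to $\gamma'K$; that is, $\gamma K\aute{\left(\nicefrac{\Gamma}{K}\right)}\gamma'K$. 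As this holds for every finite group $G$, property \ref{enu:equiv in every special characteristic quotient} of Theorem \ref{thm:six equivalences} is satisfied, and hence so is property \ref{enu:profinitely rigid}, namely $\gamma\aute{\hat{\Gamma}}\gamma'$.

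Finally, I would invoke the hypothesis that $\gamma$ is profinitely rigid (Definition \ref{def:automorphicly rigid - general}): $\gamma\aute{\hat{\Gamma}}\gamma'$ forces $\gamma\aute{\Gamma}\gamma'$, i.e.\ $\gamma'\in\mathrm{Aut}\Gamma\left(\gamma\right)$. Therefore $\overline{\mathrm{Aut}\Gamma\left(\gamma\right)}\subseteq\mathrm{Aut}\Gamma\left(\gamma\right)$, so the orbit is closed. Given Theorem \ref{thm:six equivalences}, no step is genuinely difficult; the only points requiring a little care are the description of the profinite closure as $\bigcap_{N}SN$ and the cofinality of the characteristic subgroups $K_{\Gamma}\left(G\right)$, both of which are standard facts about the profinite topology.
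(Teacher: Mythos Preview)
Your argument is correct and is essentially the contrapositive of the paper's second proof: the paper picks $\delta\notin\mathrm{Aut}\Gamma(\gamma)$, uses rigidity plus Theorem~\ref{thm:six equivalences}(\ref{enu:equiv in every special characteristic quotient}) to find a $K=K_{\Gamma}(G)$ with $\gamma K\stackrel{\mathrm{Aut}(\Gamma/K)}{\not\sim}\delta K$, and observes that the coset $\delta K$ is then an open neighbourhood disjoint from the orbit. The paper also records a shorter first proof you might note: $\mathrm{Aut}\hat{\Gamma}$ is profinite, hence compact, so $\mathrm{Aut}\hat{\Gamma}(\gamma)$ is closed in $\hat{\Gamma}$, and rigidity gives $\mathrm{Aut}\Gamma(\gamma)=\mathrm{Aut}\hat{\Gamma}(\gamma)\cap\Gamma$.
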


\begin{proof}
We give two proofs for this claim. First, the automorphism group $\mathrm{Aut}\hat{\Gamma}$
is a profinite group (\cite[Proposition 4.4.3]{ribes2010profinite}
or Lemma \ref{lem:Aut(hat(G)) as inverse limit} above), and in particular
compact, so $\mathrm{Aut}\hat{\Gamma}\left(\gamma\right)$ is closed
in $\hat{\Gamma}$. As $\gamma$ is profinitely rigid, $\mathrm{Aut}\Gamma\left(\gamma\right)=\mathrm{Aut}\hat{\Gamma}\left(\gamma\right)\cap\Gamma$,
which shows that $\mathrm{Aut}\Gamma\left(\gamma\right)$ is closed
in the profinite topology on $\Gamma$.

The second proof uses item \ref{enu:equiv in every special characteristic quotient}
in Theorem \ref{thm:six equivalences}: if $\gamma$ is profinitely
rigid, then for every $\delta\in\Gamma\setminus\mathrm{Aut}\Gamma\left(\gamma\right)$,
there is some finite group $G$ so that $\gamma K\stackrel{\mathrm{Aut}\left(\nicefrac{\Gamma}{K}\right)}{\not\sim}\delta K$
with $K=K_{\Gamma}\left(G\right)$. Then $\delta K$ in an open neighborhood
of $\delta$ in $\Gamma$ which is disjoint from $\mathrm{Aut}\Gamma\left(\gamma\right)$.
\end{proof}
\begin{rem}
We remark on several relations between words in free groups that lie
between $\aute{\F}$ and $\aute{\hat{\F}}$. The implications between
them are described in the following diagram.
\[
\xymatrix{ & w_{1}\auteq w_{2}\ar@{=>}[ld]\ar@{=>}[d]\ar@{=>}[rd]\\
w_{1}\stackrel{\mathrm{CharQuot}}{\sim}w_{2}\ar@{=>}[rdd] & w_{1}\stackrel{\mathrm{PosDef}}{\sim}w_{2}\ar@{=>}[d] & w_{1}\stackrel{\overline{\mathrm{Aut}\F}}{\sim}w_{2}\ar@{=>}[ldd]\\
 & w_{1}\stackrel{\mathrm{CptGrp}}{\sim}w_{2}\ar@{=>}[d]\\
 & w_{1}\aute{\hat{\F}}w_{2}
}
\]
Here, $w_{1}\stackrel{\mathrm{CharQuot}}{\sim}w_{2}$ means $w_{1}K\aute{\left(\nicefrac{\F}{K}\right)}w_{2}K$
for every characteristic subgroup $K\mathrm{char}\F$. The relation
$w_{1}\stackrel{\mathrm{PosDef}}{\sim}w_{2}$, introduced in \cite{Collins2020automorphism-invariant},
means that $\tau\left(w_{1}\right)=\tau\left(w_{2}\right)$ for all
$\mathrm{Aut}\F$-invariant positive definite functions $\tau$ on
$\F$. We write $w_{1}\stackrel{\mathrm{CptGrp}}{\sim}w_{2}$ to mean
that that $w_{1}$ and $w_{2}$ induce the same measure on every compact
group, and write $w_{1}\stackrel{\overline{\mathrm{Aut}\F}}{\sim}w_{2}$
to mean that there is an automorphism $\theta$ of $\hat{\F}$ which
lies in the closure of $\mathrm{Aut}\F$ in $\mathrm{Aut}\hat{\F}$
so that $\theta\left(w_{1}\right)=\theta\left(w_{2}\right)$. The
one implication that is not immediate, $w_{1}\stackrel{\mathrm{PosDef}}{\sim}w_{2}\Longrightarrow w_{1}\stackrel{\mathrm{CptGrp}}{\sim}w_{2}$
is explained in \cite[Lemma 1.12]{Collins2020automorphism-invariant}.
Of course, Conjecture \ref{conj:shalev} implies that all these relations
are equivalent.

All these relations, except for $w_{1}\stackrel{\mathrm{CptGrp}}{\sim}w_{2}$,
can be immediately generalized for elements of every finitely generated
group.
\end{rem}

\section{\label{sec:Fixed-points-of}Fixed points of random permutations and
the proof of Theorem \ref{thm:free words inside a word}}

The proof of Theorem \ref{thm:free words inside a word} relies on
the partial order defined by ``algebraic extensions'' on the set
of (finitely generated) subgroups of the free group $\F$. We begin
with a short presentation of this notion.
\selectlanguage{american}%

\subsection{Algebraic extensions}

\selectlanguage{english}%
Let $\F$ be a free group as above and $H,J\le\F$ two subgroups.
We call $J$ an algebraic extension of $H$, denoted $H\alg J$\marginpar{$H\protect\alg J$},
if and only if $H\le J$ and there is no intermediate proper free
factor of $J$, namely, if whenever $H\le M\ff J$, we must have $M=J$
(here $M\ff J$ means that $M$ is a free factor of $J$). We collect
some of the properties of this notion in the following proposition.
For proofs and more details consult the survey \cite{miasnikov2007algebraic}
or Section 4 in \cite{PP15}.
\begin{prop}
\label{prop:properties of algebraic extensions}Let $\F$ be a finitely
generated free group.
\begin{enumerate}
\item Algebraic extensions form a partial order on the set of subgroups
of $\F$. In particular, $H\alg H$ for all $H$, and $H\alg K$ whenever
$H\alg J$ and $J\alg K$.
\item If $H\le J\le K$ and $H\alg K$ then $J\alg K$.
\item \label{enu:uniqe factorization to alg+free}For every extension of
free groups $H\le J$ there is a unique intermediate subgroup $A$
satisfying $H\alg A\ff J$. Moreover, every algebraic extension of
$H$ which is contained in $J$, is also contained in $A$.
\item Every finitely generated subgroup of $\F$ has finitely many algebraic
extensions.
\end{enumerate}
\end{prop}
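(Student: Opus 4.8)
The plan is to reduce the proposition to two classical facts about free factors --- transitivity of the free-factor relation and a consequence of the Kurosh subgroup theorem --- plus, for the finiteness statement, the one-directional dictionary between subgraphs of Stallings core graphs and free factors. The two inputs I would take as known are: (i) $A\ff B\ff C\Rightarrow A\ff C$, immediate from composing the splittings $B=A*L_{1}$, $C=B*L_{2}$; and (ii) if $H\le J$ and $M\ff J$, then $H\cap M\ff H$ --- writing $J=M*L$, the Kurosh decomposition of $H$ exhibits the term indexed by the trivial double coset, namely $H\cap M$, as a free factor of $H$. I also record the fact I will use for part 4: if $\Delta$ is a basepoint-preserving subgraph of a core graph $\Gamma_{J}$, then the core of the basepoint component of $\Delta$ represents a free factor of $J$ --- one sees this by extending a spanning tree of that core to a spanning tree of $\Gamma_{J}$ and noticing that the partial basis read off is a subset of a basis of $J$. (Fact (ii) also has a Stallings-graph proof along the same lines, using that fiber products of core graphs compute subgroup intersections.)

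With (i) and (ii) in hand, parts 1 and 2 are short. Reflexivity $H\alg H$ and antisymmetry are immediate from the definition of $\alg$. For transitivity, suppose $H\alg J\alg K$ and let $H\le M\ff K$; by (ii), $M\cap J\ff J$, and since $H\le M\cap J$ the hypothesis $H\alg J$ forces $M\cap J=J$, that is, $J\le M$; then $J\le M\ff K$ together with $J\alg K$ forces $M=K$, so $H\alg K$. Part 2 is more direct still: if $H\le J\le K$ with $H\alg K$, then any $J\le M\ff K$ already satisfies $H\le M\ff K$, hence $M=K$ and $J\alg K$.

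For part 3 I would separate existence and uniqueness. Existence: among the free factors of $J$ containing $H$ --- a nonempty family, as $J$ qualifies --- choose one, $A$, of minimal rank. When $H$ is finitely generated this is legitimate because the core of the image of the Stallings morphism $\Gamma_{H}\to\Gamma_{J}$ is already a finitely generated free factor of $J$ containing $H$, bounding the rank; for arbitrary $H$ one substitutes the Zorn-type argument of \cite{miasnikov2007algebraic}. If $A$ were not an algebraic extension of $H$, then by definition there would be some $M$ with $H\le M\lneq A$ and $M\ff A$, hence $M\ff J$ by (i), contradicting the minimality of $A$; so $H\alg A\ff J$. Uniqueness: if $H\alg A_{1}\ff J$ and $H\alg A_{2}\ff J$, then (ii) applied with $A_{2}\ff J$ and $A_{1}\le J$ gives $A_{1}\cap A_{2}\ff A_{1}$; since $H\le A_{1}\cap A_{2}$, the relation $H\alg A_{1}$ forces $A_{1}\cap A_{2}=A_{1}$, so $A_{1}\le A_{2}$, and symmetrically $A_{2}\le A_{1}$, whence $A_{1}=A_{2}$. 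The ``moreover'' clause follows at once: for any algebraic extension $H\alg C$ with $C\le J$, apply existence to $C\le J$ to get $C\alg A'\ff J$; then $H\alg A'$ by transitivity (part 1), so $A'=A$ by uniqueness, and $C\le A$.

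Part 4 --- essentially Takahasi's theorem --- is where the core-graph picture is indispensable. The key claim is that $H\alg J$ forces the Stallings morphism $\Gamma_{H}\to\Gamma_{J}$ to be onto. Indeed its image is a subgraph $\Gamma'\subseteq\Gamma_{J}$; the core of $\Gamma'$ is the core graph of an intermediate subgroup $H\le M\le J$ and is a basepoint-preserving subgraph of $\Gamma_{J}$, so $M\ff J$ by the fact recorded above; since $H\alg J$ this gives $M=J$, hence $\Gamma'$ has core $\Gamma_{J}$, which forces $\Gamma'=\Gamma_{J}$. Now if $H$ is finitely generated, $\Gamma_{H}$ is a finite graph and so has only finitely many quotient graphs; since every algebraic extension $J$ is recovered, basepoint and all, from the quotient morphism $\Gamma_{H}\twoheadrightarrow\Gamma_{J}$, there are finitely many algebraic extensions of $H$. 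The part I expect to demand the most care --- and for which I would follow \cite{miasnikov2007algebraic} and \cite[Section 4]{PP15} --- is precisely the graph-theoretic infrastructure invoked above: that basepoint-preserving subgraphs of core graphs yield free factors, that fiber products of core graphs compute subgroup intersections, and, for the fully general (possibly infinitely generated) version of parts 1--3, the existence half of part 3.
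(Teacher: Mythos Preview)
Your proof is correct. Note, however, that the paper does not actually prove this proposition: it is stated with the remark ``For proofs and more details consult the survey \cite{miasnikov2007algebraic} or Section 4 in \cite{PP15},'' and no argument is given. Your approach --- reducing parts 1--3 to transitivity of $\ff$ together with the Kurosh-type fact that $M\ff J$ and $H\le J$ imply $H\cap M\ff H$, and handling part 4 via surjectivity of the Stallings morphism $\Gamma_H\to\Gamma_J$ for algebraic extensions --- is precisely the standard route taken in those references, so there is nothing to compare.
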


\subsection{Fixed points of random permutations and Möbius inversions}

In \cite{PP15}, the main object of study is $\trw\left(N\right)$,
the expected number of fixed point in a $w$-random permutation in
$S_{N}$. We discuss here parts of the analysis in \cite{PP15} which
are relevant for the proof of Theorem \ref{thm:free words inside a word}.

First, as explained at the beginning of Section \ref{sec:profinite},
a $w$-random permutation can be obtained as $\varphi\left(w\right)$,
where $\varphi\in\mathrm{Hom}\left(\F,S_{N}\right)$ is a uniformly
random homomorphism. In a similar manner, for every subgroup $H\le\F$,
one can define an $H$-random subgroup of $S_{N}$ as $\varphi\left(H\right)$,
the image of $H$ through a random homomorphism. Denote by $\Phi_{H,\F}\left(N\right)$
the expected number of elements in $\left\{ 1,\ldots,N\right\} $
which are fixed by all permutations in $\varphi\left(H\right)$. In
particular, $\trw\left(N\right)=\Phi_{\left\langle w\right\rangle ,\F}\left(N\right)$.
This notion can be then defined for every pair of finitely generated
subgroups:
\begin{defn}
Let $H,J\le\F$ be finitely generated with $H\le J$. Denote by $\Phi_{H,J}\left(N\right)$\marginpar{$\Phi_{H,J}$}
the expected number of joint fixed points of all permutations in $\varphi\left(H\right)$
where $\varphi\in\mathrm{Hom}\left(J,S_{N}\right)$ is uniformly random.
\end{defn}

Clearly, if $A\ff\F$ is a free factor and $\varphi\in\mathrm{Hom}\left(\F,S_{n}\right)$
is uniformly random, then $\varphi\Big|_{A}\in\mathrm{Hom}\left(A,S_{n}\right)$
is also uniformly random. In this case, therefore,
\[
\Phi_{A,\F}\left(N\right)=\Phi_{A,A}\left(N\right)=N^{1-\mathrm{rank}\left(A\right)}.
\]
Likewise, if $H\alg A\ff\F$ is the unique factorization of the extension
$H\le\F$ to an algebraic extension and a free extension, then 
\begin{equation}
\Phi_{H,\F}\left(N\right)=\Phi_{H,A}\left(N\right).\label{eq:Phi can be computed on alg extensions only}
\end{equation}

Next, one can define a ``Möbius inversion'' of the function $\Phi$
based on the partial order ``$\alg$'' defined above. Assume that
$H\alg J$. Because every finitely generated subgroup $H\le\F$ has
only finitely many algebraic extensions, we have, in particular, that
there are only finitely many intermediate subgroups $M$ with $H\alg M\alg J$.
This allows us to define the ``right inversion'' (or derivation)
$R$ of the function $\Phi$, as follows:\marginpar{$R_{H,M}$}
\begin{equation}
\Phi_{H,J}\left(N\right)=\sum_{M\colon H\alg M\alg J}R_{H,M}\left(N\right).\label{eq:def of R}
\end{equation}
Indeed, this well-defines $R_{H,J}\left(N\right)$ by induction on
the number of intermediate subgroups in the poset defined by ``$\alg$'':
\begin{align}
R_{H,J}\left(N\right) & =\Phi_{H,J}\left(N\right)-\sum_{M\colon H\alg M\lneqq_{\mathrm{alg}}J}R_{H,M}\left(N\right).\label{eq:recursive def of R}
\end{align}

\begin{rem}
The initial definition of $R$ in \cite{PP15} is slightly different.
It is based on a different partial order ``$\covers$'' on the finitely
generated groups of $\F$, a partial order based on Stallings core
graphs and which is basis-dependent (here $X$ marks a given basis).
This order is ``finer'' then $\alg$, in the sense that $H\covers J$
whenever $H\alg J$. The resulting function is denoted there $R^{X}$.
However, it is then shown \cite[Proposition 5.1]{PP15} that $R^{X}$
is supported on algebraic extensions, that the value is independent
of the basis $X$, and that, in fact, it is equal to the function
defined in (\ref{eq:def of R}) \cite[Equation (5.3)]{PP15}.
\end{rem}

The main result of \cite{PP15} easily follows from the following
more technical statements about the function $R$.
\begin{thm}
\label{thm:guts of PP15}Assume that $H\alg J\le\F$ are all finitely
generated groups.
\begin{enumerate}
\item \cite[immediate corollary of Lemma 6.4]{PP15} For large enough $N$,
the function $R_{H,J}\left(N\right)$ is equal to a rational expression
in $N$.
\item \label{enu:R for algebraic extensions}\cite[Proposition 7.2]{PP15}
\[
R_{H,J}\left(N\right)=N^{1-\mathrm{rank}\left(J\right)}+O\left(N^{-\mathrm{rank}\left(J\right)}\right).
\]
\item \label{enu:R_H,H}By definition, 
\[
R_{H,H}\left(N\right)=\Phi_{H,H}\left(N\right)=N^{1-\mathrm{rank}\left(H\right)}.
\]
\end{enumerate}
\end{thm}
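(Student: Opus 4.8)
The plan is to recover the three assertions from the machinery of \cite{PP15}, treating (3) directly and (1)--(2) through Stallings core graphs. Part (3) is immediate from the definitions: the sum in (\ref{eq:def of R}) with $J=H$ has the single term $M=H$, so $R_{H,H}(N)=\Phi_{H,H}(N)$; and a uniformly random $\varphi\in\Hom(H,S_N)$ sends a free basis of $H$ to independent uniform permutations, so a given point $i\in\{1,\ldots,N\}$ is fixed by all of $\varphi(H)$ with probability $N^{-\mathrm{rank}(H)}$, whence by linearity of expectation $\Phi_{H,H}(N)=N\cdot N^{-\mathrm{rank}(H)}=N^{1-\mathrm{rank}(H)}$.

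For (1), fix the basis $X$ of $\F$ and realise a uniformly random $\varphi\in\Hom(J,S_N)$ as a uniformly random $N$-sheeted cover $\widetilde{\Gamma_J}$ of the core graph $\Gamma_J$, built by choosing an independent uniform bijection of $N$-element fibres along each edge of $\Gamma_J$. A point of $\{1,\ldots,N\}$ fixed by $\varphi(H)$ is exactly a vertex of $\widetilde{\Gamma_J}$ over the basepoint of $\Gamma_J$ at which the based graph $\Gamma_H$ lifts, and such a lift, being a lift of a connected based graph, is unique; hence $\Phi_{H,J}(N)$ is the expected number of label-preserving based morphisms $\Gamma_H\to\widetilde{\Gamma_J}$. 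Every such morphism factors uniquely as a surjection $\Gamma_H\twoheadrightarrow\Delta$ onto an intermediate immersed quotient followed by an embedding $\Delta\hookrightarrow\widetilde{\Gamma_J}$; the quotients $\Delta$ that arise are finite in number and correspond to the subgroups $M$ with $H\covers M$ and $M\le J$. The expected number of embeddings of a fixed finite graph $\Delta$ into $\widetilde{\Gamma_J}$ is computed on the nose: choosing the images of the vertices of $\Delta$ fibrewise contributes $\prod_{v}(N)_{a_v}$ (with $a_v$ the number of vertices of $\Delta$ over the vertex $v$ of $\Gamma_J$, and $(N)_k$ the falling factorial), while the probability that the random bijections are consistent with the prescribed edge images is $\prod_{e}1/(N)_{b_e}$ (with $b_e$ the number of edges of $\Delta$ over the edge $e$ of $\Gamma_J$), because the constraints along each $e$ form a partial injection on $b_e$ points. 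Thus, once $N$ is at least the number of edges of $\Gamma_H$, $\Phi_{H,J}(N)=\sum_{\Delta}\prod_{v}(N)_{a_v}\big/\prod_{e}(N)_{b_e}$ is a finite sum of explicit rational functions of $N$, hence rational; and since $R_{H,J}(N)$ is obtained from the $\Phi$'s by the finite alternating sum (\ref{eq:recursive def of R}), it is rational for $N$ large. This is \cite[Lemma 6.4]{PP15}; it also uses the verification (\cite[Proposition 5.1]{PP15}) that the finer, basis-dependent inversion of \cite{PP15} is supported on algebraic extensions and equals (\ref{eq:def of R}), so that what is written as $R_{H,J}$ here agrees with the quantity analysed there.

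For (2), extract leading terms from the formula above. For a connected immersed quotient $\Delta$, $\prod_{v}(N)_{a_v}\big/\prod_{e}(N)_{b_e}=N^{\,V(\Delta)-E(\Delta)}\big(1+O(1/N)\big)=N^{\,1-\mathrm{rank}(\Delta)}\big(1+O(1/N)\big)$. Among the quotients whose contribution ends up in $R_{H,J}$ (rather than in some $R_{H,M}$ with $M\alg J$ proper, which is subtracted in (\ref{eq:recursive def of R})), the extreme one is $\Gamma_J$ itself, which is a quotient of $\Gamma_H$ since $H\alg J$ implies $H\covers J$; it embeds in $\widetilde{\Gamma_J}$ with $a_v=b_e=1$ for all $v,e$, contributing exactly $N^{V(\Gamma_J)-E(\Gamma_J)}=N^{1-\mathrm{rank}(J)}$ with coefficient $1$. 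The main obstacle is the bookkeeping needed to show that, after the Möbius inversion (\ref{eq:recursive def of R}), every remaining contribution to $R_{H,J}$ is $O(N^{-\mathrm{rank}(J)})$: one must check both that no quotient contributing to $R_{H,J}$ has rank smaller than $\mathrm{rank}(J)$ (so that nothing of strictly larger order than $N^{1-\mathrm{rank}(J)}$ survives) and that all the order-$N^{1-\mathrm{rank}(J)}$ terms other than the one from $\Gamma_J$ cancel against the subtracted $R_{H,M}$'s. This is precisely \cite[Proposition 7.2]{PP15}, and once it is in hand the estimate $R_{H,J}(N)=N^{1-\mathrm{rank}(J)}+O(N^{-\mathrm{rank}(J)})$ follows; the rest is routine.
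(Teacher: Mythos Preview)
The paper does not give its own proof of this theorem: it is stated as a compilation of results from \cite{PP15}, with part (3) labelled ``by definition'' and parts (1)--(2) cited directly. Your proposal is therefore not being compared against a proof in the present paper but against the cited arguments in \cite{PP15}, and your sketch tracks those arguments faithfully: the random-cover interpretation of $\Phi_{H,J}$, the factorisation through immersed quotients, the falling-factorial count of embeddings, and the identification of the leading $N^{1-\mathrm{rank}(J)}$ term coming from $\Gamma_J$ are all exactly the ingredients of \cite[\S5--7]{PP15}. You are also right that the substantive work in (2)---showing that after the M\"obius inversion no term of order $\ge N^{1-\mathrm{rank}(J)}$ survives beyond the single $\Gamma_J$ contribution---is precisely the content of \cite[Proposition~7.2]{PP15}, and you correctly defer to it rather than reproduce it. Your treatment of (3) is complete and matches the ``by definition'' tag in the paper.
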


Let $H\le\F$ be finitely generated. Let $H\alg A\ff\F$ be the unique
factorization into an algebraic and a free extensions. Using items
\ref{enu:R for algebraic extensions} and \ref{enu:R_H,H} from Theorem
\ref{thm:guts of PP15}, we obtain that 
\begin{eqnarray}
\Phi_{H,\F}\left(N\right) & \stackrel{\eqref{eq:Phi can be computed on alg extensions only}}{=} & \Phi_{H,A}\left(N\right)\stackrel{\eqref{eq:def of R}}{=}\sum_{J\colon H\alg J\alg A}R_{H,J}\left(N\right)\nonumber \\
 & \stackrel{\mathrm{Proposition}~\ref{prop:properties of algebraic extensions}}{=} & \sum_{J\le\F\colon H\alg J}R_{H,J}\left(N\right)\nonumber \\
 & \stackrel{\mathrm{Theorem}~\ref{thm:guts of PP15}}{=} & N^{1-\mathrm{rank}\left(H\right)}+\sum_{J\le\F\colon H\lvertneqq_{\mathrm{alg}}J}\left[N^{1-\mathrm{rank}\left(J\right)}+O\left(N^{-\mathrm{rank}\left(J\right)}\right)\right].\label{eq:Phi in terms of alg extensions}
\end{eqnarray}
Equation (\ref{eq:Phi in terms of alg extensions}) leads immediately
to the following theorem, which is the main result of \cite{PP15}
with regards to $\trw\left(N\right)$.
\begin{thm}
\label{thm:PP15 main}\cite[Theorem 1.8]{PP15} Let $H\le\F$ be finitely
generated free groups. Denote by $\pi\left(H\right)$ the smallest
rank of a \emph{proper} algebraic extension of $H$, or $\pi\left(H\right)=\infty$
if there are no proper algebraic extensions, namely, if $H\ff\F$.
Then, 
\[
\Phi_{H,\F}\left(N\right)=N^{1-\mathrm{rank}\left(H\right)}+C\cdot N^{1-\pi\left(H\right)}+O\left(N^{-\pi\left(H\right)}\right),
\]
where $C$ is the number of proper algebraic extensions of $H$ of
rank $\pi\left(H\right)$.

In particular, for a word $w\in\F$, denote by $\pi\left(w\right)$
the smallest rank of a \emph{proper} algebraic extension of $\left\langle w\right\rangle $,
or $\pi\left(w\right)=\infty$ if $w$ is primitive in $\F$. Then,
\[
\trw\left(N\right)=\Phi_{\left\langle w\right\rangle ,\F}\left(N\right)=1+C\cdot N^{1-\pi\left(w\right)}+O\left(N^{-\pi\left(w\right)}\right),
\]
where $C$ is the number of proper algebraic extensions of $\left\langle w\right\rangle $
of rank $\pi\left(w\right)$.
\end{thm}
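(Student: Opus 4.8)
The plan is to derive the theorem directly from equation~\eqref{eq:Phi in terms of alg extensions}, which already expresses $\Phi_{H,\F}(N)$ as the ``diagonal'' term $N^{1-\mathrm{rank}(H)}$ (the contribution of $R_{H,H}$) plus a sum over the proper algebraic extensions of $H$ inside $\F$. The first step is to record, by Proposition~\ref{prop:properties of algebraic extensions}(4), that $H$ has only finitely many algebraic extensions in $\F$, so the sum in~\eqref{eq:Phi in terms of alg extensions} is finite; combined with Theorem~\ref{thm:guts of PP15}(1), which says each $R_{H,J}(N)$ eventually agrees with a fixed rational function of $N$, this ensures that all the error estimates below hold simultaneously for all sufficiently large $N$.

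Next I would dispose of the case $H\ff\F$: by Proposition~\ref{prop:properties of algebraic extensions}(3) the only algebraic extension of $H$ contained in $\F$ is then $H$ itself, the sum in~\eqref{eq:Phi in terms of alg extensions} is empty, and $\Phi_{H,\F}(N)=N^{1-\mathrm{rank}(H)}$ --- exactly the claimed formula under the convention $\pi(H)=\infty$, so that $N^{1-\pi(H)}$ and $O(N^{-\pi(H)})$ vanish. Otherwise $H$ admits a proper algebraic extension; writing $\pi=\pi(H)$ for the least rank of such an extension and $C$ for the number of those of rank exactly $\pi$, I would split the finite set of proper algebraic extensions of $H$ into those of rank $\pi$ and those of rank $\ge\pi+1$. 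By Theorem~\ref{thm:guts of PP15}(2), each extension of rank $\pi$ contributes $N^{1-\pi}+O(N^{-\pi})$ to~\eqref{eq:Phi in terms of alg extensions}, for a total of $C\cdot N^{1-\pi}+O(N^{-\pi})$; and each of the finitely many extensions $J$ of rank $\ge\pi+1$ contributes $N^{1-\mathrm{rank}(J)}+O(N^{-\mathrm{rank}(J)})=O(N^{-\pi})$. Adding these to the diagonal term $N^{1-\mathrm{rank}(H)}$ yields the asserted expansion. Specializing to $H=\langle w\rangle$ for a word $w\ne1$, where $\mathrm{rank}(H)=1$ (so the diagonal term is the constant $1$) and $\trw(N)=\Phi_{\langle w\rangle,\F}(N)$, gives the statement for words.

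Because the genuinely substantial input --- the M\"obius inversion of $\Phi$ over the poset of algebraic extensions and the leading-order estimate $R_{H,J}(N)=N^{1-\mathrm{rank}(J)}+O(N^{-\mathrm{rank}(J)})$ --- is already packaged in Theorem~\ref{thm:guts of PP15} and equation~\eqref{eq:Phi in terms of alg extensions}, I do not expect a serious obstacle here. The one point that needs care is a bookkeeping one: passing from ``each leftover extension contributes $O(N^{-\mathrm{rank}(J)})=O(N^{-\pi})$'' to ``their sum is $O(N^{-\pi})$'' is legitimate only because there are finitely many of them, which is precisely what Proposition~\ref{prop:properties of algebraic extensions}(4) supplies. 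One should also observe that when $\pi(H)=\mathrm{rank}(H)$ --- for instance when $w$ is a proper power, so $\pi(w)=1$ --- the first two displayed terms simply combine into $(1+C)\,N^{1-\mathrm{rank}(H)}$, and when $\mathrm{rank}(H)>\pi(H)$ the diagonal term is itself $O(N^{-\pi(H)})$; in either case the displayed equality is unaffected.
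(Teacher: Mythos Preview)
Your proposal is correct and follows exactly the route the paper indicates: the paper does not give a separate proof environment for this theorem but simply states that equation~\eqref{eq:Phi in terms of alg extensions} ``leads immediately'' to it, and you have spelled out that immediate derivation carefully, including the finiteness point from Proposition~\ref{prop:properties of algebraic extensions}(4) and the edge cases.
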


The following theorem is a generalization of Theorem \ref{thm:PP15 main},
which can also be seen as a quantitative version of Theorem \ref{thm:free words inside a word}.
\begin{thm}
\label{thm:generalization of pi}Let $H\alg J$ be an algebraic extension
of finitely generated free groups. Let $\iota\colon J\hookrightarrow\F=\F_{r}$
be an embedding of $J$ in $\F$. Denote by $\pi_{\iota}\left(H\right)$\marginpar{$\pi_{\iota}\left(H\right)$}
the smallest rank of an algebraic extension of $\iota\left(H\right)$
in $\F$ which is \emph{not contained in $\iota\left(J\right)$, or
$\pi_{\iota}\left(H\right)=\infty$ if $\iota\left(J\right)\ff\F$.
Then,} 
\[
\Phi_{\iota\left(H\right),\F}\left(N\right)=\begin{cases}
\Phi_{H,J}\left(N\right)+C\cdot N^{1-\pi_{\iota}\left(H\right)}+O\left(N^{-\pi_{\iota}\left(H\right)}\right) & \mathrm{if}\,\,\pi_{\iota}\left(H\right)<\infty\\
\Phi_{H,J}\left(N\right) & \mathrm{if}\,\,\pi_{\iota}\left(H\right)=\infty
\end{cases},
\]
where $C$ is the number of algebraic extensions of $\iota\left(H\right)$
of rank $\pi_{\iota}\left(H\right)$ inside $\F$ not contained in
$\iota\left(J\right)$. In particular, if $\pi_{\iota}\left(H\right)<\infty$
then for every large enough $N$, we have 
\[
\Phi_{\iota\left(H\right),\F}\left(N\right)>\Phi_{H,J}\left(N\right).
\]
\end{thm}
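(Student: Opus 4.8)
The plan is to mimic the computation in \eqref{eq:Phi in terms of alg extensions}, but now comparing the two sides using the partial order $\alg$ restricted appropriately. First I would reduce to the case $\iota\left(J\right)\not\ff\F$, since if $\iota\left(J\right)\ff\F$ then a random homomorphism $\F\to S_{N}$ restricts to a uniformly random homomorphism on $\iota\left(J\right)$, hence also on $\iota\left(H\right)$, so $\Phi_{\iota\left(H\right),\F}\left(N\right)=\Phi_{\iota\left(H\right),\iota\left(J\right)}\left(N\right)=\Phi_{H,J}\left(N\right)$ and there is nothing to prove. So assume $\iota\left(J\right)\not\ff\F$, identify $J$ with $\iota\left(J\right)$ and $H$ with $\iota\left(H\right)$, and let $H\alg A\ff\F$ be the canonical factorization of the extension $H\le\F$ from Proposition~\ref{prop:properties of algebraic extensions}(\ref{enu:uniqe factorization to alg+free}). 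By that same item, every algebraic extension of $H$ inside $\F$ lies inside $A$, so the algebraic extensions of $H$ in $\F$ are exactly the $M$ with $H\alg M\alg A$.

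Next I would split this poset of algebraic extensions $\left\{ M\,:\,H\alg M\le\F\right\}$ into those contained in $J$ and those not contained in $J$. An algebraic extension $M$ of $H$ contained in $J$ satisfies $H\alg M\le J$, and by Proposition~\ref{prop:properties of algebraic extensions}(2) (transitivity downward: $H\le M\le J$ with $H\alg J$ forces $M\alg J$, and likewise $H\alg M$) these are precisely the $M$ with $H\alg M\alg J$ — i.e. exactly the index set appearing in \eqref{eq:def of R} for $\Phi_{H,J}$. Using \eqref{eq:Phi can be computed on alg extensions only}, \eqref{eq:def of R}, and Proposition~\ref{prop:properties of algebraic extensions} exactly as in the derivation of \eqref{eq:Phi in terms of alg extensions}, I get
\[
\Phi_{H,\F}\left(N\right)=\sum_{M\colon H\alg M\le\F}R_{H,M}\left(N\right),\qquad\Phi_{H,J}\left(N\right)=\sum_{M\colon H\alg M\alg J}R_{H,M}\left(N\right),
\]
so that
\[
\Phi_{H,\F}\left(N\right)-\Phi_{H,J}\left(N\right)=\sum_{M\colon H\alg M\le\F,\,M\not\le J}R_{H,M}\left(N\right).
\]
Now I invoke Theorem~\ref{thm:guts of PP15}(\ref{enu:R for algebraic extensions}): each such $R_{H,M}\left(N\right)=N^{1-\rk(M)}+O\left(N^{-\rk(M)}\right)$. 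Among the (finitely many, by Proposition~\ref{prop:properties of algebraic extensions}(4)) algebraic extensions $M$ of $H$ in $\F$ not contained in $J$, the minimal rank is $\pi_{\iota}\left(H\right)$ by definition, and $C$ of them achieve it; all others have rank $\ge\pi_{\iota}\left(H\right)+1$, hence contribute $O\left(N^{-\pi_{\iota}\left(H\right)}\right)$. Summing gives $\Phi_{H,\F}\left(N\right)-\Phi_{H,J}\left(N\right)=C\cdot N^{1-\pi_{\iota}\left(H\right)}+O\left(N^{-\pi_{\iota}\left(H\right)}\right)$, which is the claimed formula; and since $C\ge1$, for large enough $N$ this difference is positive.

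The main obstacle — really the only nontrivial point — is verifying that the index set in the sum for $\Phi_{H,J}\left(N\right)$ is genuinely $\left\{M:H\alg M\alg J\right\}$ and that this is exactly the set of algebraic extensions of $H$ in $\F$ that happen to be contained in $J$, with no mismatch in the poset structure or in the value of $R_{H,M}$ (which, crucially, does not depend on the ambient group by the basis-independence discussed in the remark after \eqref{eq:recursive def of R} and \cite[Proposition 5.1]{PP15}). Once that bookkeeping is pinned down, together with the key input that every algebraic extension of $H$ inside $\F$ is already inside $A$ (so the two $\Phi$'s live over comparable indexing posets and the difference is a clean sub-sum), the asymptotic estimate is immediate from Theorem~\ref{thm:guts of PP15}. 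Finally, Theorem~\ref{thm:free words inside a word} follows by taking $H=\left\langle w\right\rangle$ suitably and noting that the hypotheses there force $\pi_{\iota}\left(H\right)<\infty$.
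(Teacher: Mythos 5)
Your proposal is correct and follows essentially the same route as the paper: factor $\iota(H)\alg A\ff\F$, expand $\Phi_{\iota(H),\F}$ as a sum of $R_{\iota(H),M}$ over all algebraic extensions $M$ of $\iota(H)$ in $\F$, split according to whether $M\le\iota(J)$, identify the first sub-sum with $\Phi_{H,J}$ (using that $R_{H,M}$ depends only on the pair $H\alg M$ and not on the ambient group, since it is determined recursively by the $\Phi_{H,K}$ for $H\alg K\alg M$), and estimate the second sub-sum via Theorem~\ref{thm:guts of PP15}(\ref{enu:R for algebraic extensions}). The one point you flag as the ``main obstacle'' is indeed exactly the point the paper spells out, and your justification for it is the right one.
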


If $H$ is a subgroup of $\F$ and we let $J=H$ and $\iota\colon H\hookrightarrow\F$
be the embedding, then Theorem \ref{thm:generalization of pi} reduces
to Theorem \ref{thm:PP15 main}.
\begin{proof}[Proof of Theorem \ref{thm:generalization of pi}]
 Let $\iota\left(H\right)\alg A\ff\F$ be the unique factorization
of $\iota\left(H\right)\le\F$ to an algebraic extension and a free
extension. By (\ref{eq:Phi can be computed on alg extensions only}),(\ref{eq:def of R})
and Proposition \ref{prop:properties of algebraic extensions},
\begin{eqnarray}
\Phi_{\iota\left(H\right),\F}\left(N\right) & = & \Phi_{\iota\left(H\right),A}\left(N\right)=\sum_{M\colon\iota\left(H\right)\alg M\alg A}R_{\iota\left(H\right),M}\left(N\right)=\sum_{M\le\F\colon\iota\left(H\right)\alg M}R_{\iota\left(H\right),M}\left(N\right)\nonumber \\
 & = & \sum_{M\le\F\colon\iota\left(H\right)\alg M\le\iota\left(J\right)}R_{\iota\left(H\right),M}\left(N\right)+\sum_{M\le\F\colon\iota\left(H\right)\alg M\nleq\iota\left(J\right)}R_{\iota\left(H\right),M}\left(N\right).\label{eq:two summands}
\end{eqnarray}
By Theorem \ref{thm:guts of PP15}(\ref{enu:R for algebraic extensions}),
the second summand in (\ref{eq:two summands}) is precisely $C\cdot N^{1-\pi_{\iota}\left(H\right)}+O\left(N^{-\pi_{\iota}\left(H\right)}\right)$,
where $C$ is as in the statement of the theorem. It remains to show
that the first summand is equal to $\Phi_{H,J}\left(N\right)$.

Indeed, as $\iota$ is an isomorphism between $J$ and $\iota\left(J\right)$,
the algebraic extensions of $H$ in $J$ are precisely $\left\{ \iota^{-1}\left(M\right)\,\middle|\,\iota\left(H\right)\alg M\le\iota\left(J\right)\right\} $.
It is thus enough to show that for every $M$ with $\iota\left(H\right)\alg M\le\iota\left(J\right)$
(and every $N$) we have
\[
R_{H,\iota^{-1}\left(M\right)}\left(N\right)=R_{\iota\left(H\right),M}\left(N\right).
\]
But by the definition of the derivation $R$ in (\ref{eq:def of R}),
the values $\left\{ R_{H,K}\left(N\right)\right\} _{H\alg K\le J}$
are given by the values $\left\{ \Phi_{H,K}\left(N\right)\right\} _{H\alg K\le J}$,
and it is clear that if $H\alg K\le J$ then $\Phi_{H,K}\left(N\right)=\Phi_{\iota\left(H\right),\iota\left(K\right)}\left(N\right)$.
\end{proof}

\begin{proof}[Proof of Theorem \ref{thm:free words inside a word}]
 Recall the assumptions of Theorem \ref{thm:free words inside a word}:
$w\in\F_{k}$ is not contained in a proper free factor, and $u_{1},\ldots,u_{k}\in\F$
are free and do not generate a free factor of $\F$. The assumption
that $w$ is not contained in a proper free factor is equivalent to
that $\left\langle w\right\rangle \alg\F_{k}$. If $y_{1},\ldots,y_{k}$
is a basis of $\F_{k}$, define a map $\iota\colon\F_{k}\to\F$ by
$\iota\left(y_{i}\right)=u_{i}\in\F$ for $1\le i\le k$. The assumption
that $u_{1},\ldots,u_{k}$ are free is equivalent to that $\iota\colon\F_{k}\to\F$
is an embedding. Finally, $u_{1},\ldots,u_{k}$ generate a free factor
of $\F$ if and only if $\pi_{\iota}\left(\left\langle w\right\rangle \right)=\infty$.
So if $u_{1},\ldots,u_{k}$ do not generate a free factor then $\pi_{\iota}\left(\left\langle w\right\rangle \right)<\infty$
and, by Theorem \ref{thm:generalization of pi}, we obtain
\[
\tr_{w\left(u_{1},\ldots,u_{k}\right)}\left(N\right)=\Phi_{\iota\left(\left\langle w\right\rangle \right),\F}\left(N\right)=\Phi_{\left\langle w\right\rangle ,\F_{k}}\left(N\right)+C\cdot N^{1-\pi_{\iota}\left(\left\langle w\right\rangle \right)}+O\left(N^{-\pi_{\iota}\left(\left\langle w\right\rangle \right)}\right),
\]
which is strictly larger than $\Phi_{\left\langle w\right\rangle ,\F_{k}}\left(N\right)=\trw\left(N\right)$
for every large enough $N$.
\end{proof}
\begin{rem}
As explained towards the end of Section \pageref{sec:Introduction},
Theorem \ref{thm:free words inside a word} can be used to show that
if the word $\left[u,v\right]$ (for some $u,v\in\F$) induces the
same measures on finite groups as $\left[x,y\right]$, then $\left\{ u,v\right\} $
can be extended to a basis of $\F$, and that if $u^{m}$ induces
the same measures on finite groups as $x^{m}$, then $u$ is primitive.

Another case where Theorem \ref{thm:free words inside a word} is
handy is the case of surface words analyzed in \cite{MP-surface-words}.
Consider first a word $w$ which induces the same measures on \emph{compact
}groups as the orientable surface word $s_{g}=\left[x_{1},y_{1}\right]\cdots\left[x_{g},y_{g}\right]$.
The proof in \cite{MP-surface-words} that $w\aute{\F}s_{g}$ consists
of three steps. First, using measures on unitary groups, it is shown
that $w$ is a product of at most $g$ commutators. Then, using measures
on a generalized symmetric group $S^{1}\wr S_{N}$, it is shown that
$w$ is in fact a product of $g$ commutators, so $w=\left[u_{1},v_{1}\right]\cdots\left[u_{g},v_{g}\right]$
such that $u_{1},v_{1},\ldots,u_{g},v_{g}$ are free (see the section
``Overview of the proof'' in \cite[Page 5]{MP-surface-words}).
Then, the more involved \cite[Theorem 3.6]{MP-surface-words} is used
to finish the proof. However, for this last step, one can also use
Theorem \ref{thm:free words inside a word}, from which it follows
that if $w=\left[u_{1},v_{1}\right]\cdots\left[u_{g},v_{g}\right]$
with $u_{1},v_{1},\ldots,u_{g},v_{g}$ free and $w$ induces the same
measures on $S_{N}$ as $s_{g}$, then $w\aute{\F}s_{g}$.

Theorem \ref{thm:free words inside a word} can be similarly used
for the other type of words studied in \cite{MP-surface-words}: that
of non-orientable surface words $x_{1}^{2}\cdots x_{g}^{2}$.
\end{rem}

\section{Measures induced by powers\label{sec:powers}}

\subsection{Proof of Theorem \ref{thm:from word to its powers}}

In the language of Profinite topology, Khelif's Theorem \ref{thm:Khelif}
says that the set of commutators in $\F$ is closed in the profinite
topology. There is a similar result, due to Lubotzky, concerning the
set of $d$th powers in $\F$. As explained in the paragraph following
Theorem \ref{thm:Khelif}, this immediately implies that $d$th powers
are distinguishable by measures from non-$d$th powers:
\begin{thm}[{Lubotzky, see \cite[Page 252]{thompson1997power}}]
\label{thm:nth powers are closed} The set $\left\{ u^{d}\,\middle|\,u\in\F\right\} $
of $d$th powers is closed in the profinite topology on $\F$. In
particular, if $w_{1}$ is a $d$th power and $w_{2}$ is not, then
$w_{1}$ and $w_{2}$ do not induce the same measure on all finite
groups.
\end{thm}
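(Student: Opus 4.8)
First, the second assertion follows from the first exactly as Corollary~\ref{cor:from-Khelif} follows from Khelif's Theorem~\ref{thm:Khelif}: if $w_{1}=u^{d}$ and $G$ is a finite quotient of $\F$ in which the image of $w_{2}$ is not a $d$th power (one such $G$ exists once we know the set of $d$th powers is closed), then that image lies in the support of the $w_{2}$-measure on $G$ but not in that of the $w_{1}$-measure, because the word map $u^{d}\colon G^{r}\to G$ takes values in $G^{[d]}:=\left\{ g^{d}\mid g\in G\right\}$. So everything reduces to the closedness of $P_{d}:=\left\{ u^{d}\mid u\in\F\right\}$ in the profinite topology of $\F$. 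Unwinding definitions, the profinite closure of $P_{d}$ equals $\bigcap_{N\trianglelefteq_{\mathrm{f.i.}}\F}P_{d}N$, which is exactly the set of $w\in\F$ whose image in every finite quotient of $\F$ is a $d$th power; the task is to show such a $w$ is genuinely a $d$th power in $\F$. Two elementary reductions come first. Discarding the trivial case $w=1$ and using that every nontrivial element of a free group is a power of a unique root, write $w=v^{e}$ with $v$ not a proper power and $e\geq1$; since two elements of a free group with a common nontrivial power lie in a common cyclic subgroup, $w$ is a $d$th power in $\F$ precisely when $d\mid e$, so I assume $d\nmid e$ and look for a contradiction. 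Choosing a prime $p$ with $v_{p}(d)>v_{p}(e)$ and replacing $d$ by $p^{v_{p}(d)}$ (a $d$th power in any group is in particular a $p^{v_{p}(d)}$th power there), I may assume $d=p^{b}$ with $v_{p}(e)<b$.

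Next I would transfer the hypothesis to the profinite completion $\widehat{\F}$, the free profinite group of rank $r$, by a standard compactness argument. The $d$th-power map $\xi\mapsto\xi^{d}$ on $\widehat{\F}$ is continuous and $\widehat{\F}$ is compact; for each $N\trianglelefteq_{\mathrm{f.i.}}\F$ the set $C_{N}:=\left\{ \xi\in\widehat{\F}\mid\xi^{d}\in\iota(w)\,\overline{\iota(N)}\right\}$ is closed and nonempty, the latter because the image of $w$ in $\F/N$ is a $d$th power. These sets form a filtered family of nonempty closed subsets of a compact space, so their intersection is nonempty, and since $\bigcap_{N}\overline{\iota(N)}=1$ any $\xi$ in that intersection satisfies $\xi^{d}=\iota(w)=\iota(v)^{e}$. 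It thus remains to show that there is no $\xi\in\widehat{\F}$ with $\xi^{d}=\iota(v)^{e}$ when $v$ is not a proper power in $\F$ and $d\nmid e$.

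For this endgame I would invoke the structural fact that in a free profinite group the centralizer of $\iota(g)$, for $1\neq g\in\F$, is the closure of the centralizer of $g$ in $\F$. Since $v$ is not a proper power, $C_{\F}(v^{e})=\langle v\rangle$, so $C_{\widehat{\F}}(\iota(v)^{e})=\overline{\langle\iota(v)\rangle}$; as $\langle v\rangle$ is finitely generated it is closed in the profinite topology of $\F$ and inherits its full profinite topology (via Marshall Hall's theorem), so $\overline{\langle\iota(v)\rangle}\cong\widehat{\mathbb{Z}}$, topologically generated by $\iota(v)$. Both $\iota(v)$ and $\xi$ centralize $\iota(v)^{e}=\xi^{d}$, which is nontrivial because $\widehat{\F}$ is torsion-free, so both lie in $\overline{\langle\iota(v)\rangle}$. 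Identifying this subgroup with $\widehat{\mathbb{Z}}$ so that $\iota(v)$ becomes $1$, the relation $\xi^{d}=\iota(v)^{e}$ reads $d\beta=e$ with $\beta\in\widehat{\mathbb{Z}}$, forcing $e\in d\widehat{\mathbb{Z}}\cap\mathbb{Z}=d\mathbb{Z}$, in contradiction with $d\nmid e$.

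The crux, and the step I expect to be the real obstacle, is the centralizer statement for free profinite groups used above. It is the profinite shadow of the elementary facts about commuting elements and common powers in free groups, but proving it genuinely uses the structure theory of free profinite groups: torsion-freeness, procyclicity of centralizers of nontrivial elements, and density of $\langle v\rangle$ in the relevant centralizer (the last reflecting subgroup separability of $\F$). This is in effect what Lubotzky's argument provides, and I would cite it rather than reprove it. A more hands-on route avoiding $\widehat{\F}$ would instead try to construct directly a finite $p$-group $G$ of exponent dividing $p^{b}$ together with a homomorphism $\F\to G$ sending $v$ to an element of order $p^{v_{p}(e)+1}$, so that $v^{e}$ has order $p$ in $G$ and cannot be a $p^{b}$th power; but for $v\in[\F,\F]$ such $G$ must be non-abelian, and producing them runs into Burnside-type obstructions and seems to require essentially the same input in a different guise.
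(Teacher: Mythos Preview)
Your argument is correct, but it follows a genuinely different route from the paper's own proof of this theorem (given as Theorem~\ref{thm:Lubotzky with homomorphisms to S_N}). The paper works entirely inside symmetric groups: it characterizes $d$th powers in $S_{N}$ via divisibility conditions on cycle counts (Lemma~\ref{lem:when a permutation is a dth power}), computes the first two moments of $c_{t}(\sigma^{b})$ (Lemma~\ref{lem:number of t-cycles in an a-power}), and invokes Nica's theorem that the limit statistics of $c_{t}(\varphi(u^{b}))$ depend only on $b$; from $c_{t}^{2}\ge t\cdot c_{t}$ it extracts $\frac{b}{t}+\frac{1}{t^{2}}\ge 1$ with $t=\mathrm{lcm}(b,d)$, forcing $d\mid b$. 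Your argument instead lifts to $\hat{\F}$ by a compactness argument to produce an honest $d$th root $\xi$ of $\iota(w)$, and then uses the centralizer statement to trap $\xi$ in $\overline{\langle\iota(v)\rangle}\cong\hat{\mathbb{Z}}$, reducing to $d\hat{\mathbb{Z}}\cap\mathbb{Z}=d\mathbb{Z}$. Your approach is more structural and closer in spirit to how the paper proves Corollary~\ref{cor:every profinite root of a word is in F}; the paper's $S_{N}$ proof is elementary and self-contained but needs Nica's limit theorem as external input.

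Two small remarks. First, the prime-power reduction you make is never actually used: once you reach $d\beta=e$ in $\hat{\mathbb{Z}}$ you get $d\mid e$ for arbitrary $d$. Second, the ``crux'' you identify, the centralizer of $\iota(v)$ in $\hat{\F}$ being $\overline{\langle v\rangle}$, is exactly Lemma~\ref{lem:centralizer of an element in hat F} here, proved via Herfort--Ribes (Theorem~\ref{thm:centralizers in free products}) and conjugacy separability; since that lemma does not rely on Theorem~\ref{thm:nth powers are closed}, there is no circularity in your route within this paper.
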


We include below (Theorem \ref{thm:Lubotzky with homomorphisms to S_N})
another proof of Theorem \ref{thm:nth powers are closed} which relies
on homomorphisms from the free group to $S_{N}$.

Another ingredient in our proof of Theorem \ref{thm:from word to its powers}
is the following theorem due to Herfort and Ribes. The free profinite
product of two profinite groups $A$ and $B$ is denoted $A\sqcup B$
-- see \cite[Section 9.1]{ribes2010profinite} for a discussion on
free profinite products. We denote the centralizer of $g$ in the
group $G$ by $C_{G}\left(g\right)$.
\begin{thm}
\label{thm:centralizers in free products}\cite[Theorem B]{herfort1985torsion}
Let $A$ and $B$ be profinite groups and let $A\sqcup B$ be their
free profinite product. If $a\in A$ then the centralizer of $a$
in $A\sqcup B$ is contained in $A$.
\end{thm}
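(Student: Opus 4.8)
The plan is to run the profinite analogue of the classical Bass--Serre argument for centralizers in a discrete free product. Write $G=A\sqcup B$. We may assume $a\neq 1$: for $a=1$ one has $C_{G}(a)=G$, which is contained in $A$ only when $B$ is trivial, so the content of the theorem is the case $1\neq a\in A$. Recall that $G$ acts on its \emph{standard profinite tree} $T=T(G)$: this is a profinite tree on which $G$ acts with fundamental domain a single edge, the two vertex stabilizers are (conjugates of) the factors $A$ and $B$, which embed in $G$, and the stabilizer of every edge is trivial. Let $v_{A}\in T$ be the vertex with $G_{v_{A}}=A$. The whole point will be to prove that $a$ fixes the vertex $v_{A}$ and no other point of $T$. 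Granting this, let $g\in C_{G}(a)$; since $gag^{-1}=a$, for every $t\in T$ with $at=t$ we get $a(gt)=(gag^{-1})(gt)=g(at)=gt$, so $g$ maps $\mathrm{Fix}_{T}(a)$ into itself. As $\mathrm{Fix}_{T}(a)=\{v_{A}\}$ is a single point, $gv_{A}=v_{A}$, hence $g\in G_{v_{A}}=A$, which is exactly what we want.

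So the crux is the claim $\mathrm{Fix}_{T}(a)=\{v_{A}\}$. Since $a\in A=G_{v_{A}}$, the vertex $v_{A}$ is fixed, so $\mathrm{Fix}_{T}(a)$ is nonempty. Note $\mathrm{Fix}_{T}(a)=\mathrm{Fix}_{T}\big(\overline{\langle a\rangle}\big)$, because fixing a given point of $T$ is a closed condition, and $\overline{\langle a\rangle}\leq G$ is a procyclic closed subgroup. By the structure theory of profinite groups acting on profinite trees, the fixed-point set of a procyclic closed subgroup, when nonempty, is a profinite subtree of $T$; in particular $\mathrm{Fix}_{T}(a)$ is a connected profinite subgraph. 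A connected profinite graph containing two distinct vertices contains an edge; hence if $\mathrm{Fix}_{T}(a)$ contained a vertex other than $v_{A}$, it would contain some edge $e$, and then $a\in G_{e}=\{1\}$, contradicting $a\neq 1$. Therefore $\mathrm{Fix}_{T}(a)=\{v_{A}\}$, completing the proof.

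The main obstacle lies entirely in the profinite-tree inputs invoked above: (i) that a free profinite product $A\sqcup B$ genuinely acts on a profinite tree with the stated vertex and (trivial) edge stabilizers — this is the profinite Bass--Serre theory developed by Zalesskii, Mel'nikov and Ribes, where profinite trees are \emph{not} merely inverse limits of finite trees in an innocuous sense; and (ii) that the fixed-point set of a procyclic closed subgroup acting on a profinite tree is a subtree whenever it is nonempty, so that it cannot contain two distinct vertices without containing an edge — a statement that is transparent for discrete trees but requires the correct finiteness hypotheses in the profinite category (satisfied here since $\overline{\langle a\rangle}$ is procyclic). An alternative route, which relocates rather than removes the difficulty, is to first reduce to finite $A,B$ via $A\sqcup B\cong\varprojlim_{i,j}(A_{i}\sqcup B_{j})$ over finite quotients $A_{i},B_{j}$ — observing that $a$ remains nontrivial in $A_{i}$ for $i$ large and that the centralizer condition passes to the inverse system and back — and then to control the centralizer inside the profinite completion $\widehat{A*B}$ of the discrete free product using residual finiteness of $A*B$ and its discrete Bass--Serre tree; but this still ultimately leans on profinite-tree machinery to handle the completion.
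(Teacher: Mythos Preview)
The paper does not give its own proof of this statement: it is quoted verbatim as \cite[Theorem B]{herfort1985torsion} and used as a black box in the proof of Lemma \ref{lem:centralizer of an element in hat F}. So there is nothing in the paper to compare your argument against.

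As for the argument itself, the strategy via the standard profinite tree of $A\sqcup B$ is the right modern route and the logical skeleton is sound: once you know $\mathrm{Fix}_{T}(a)=\{v_{A}\}$, the centralizer visibly stabilizes $v_{A}$ and hence lies in $A$. You are also right to flag the $a=1$ case (the theorem as stated in the paper is only used for $a\ne 1$). The genuine content, as you say, is entirely in the two profinite-tree inputs. A small caution on your phrasing ``a connected profinite graph containing two distinct vertices contains an edge'': in the profinite category this is not the naive combinatorial statement, and what you actually want is that the fixed subtree, if it has more than one vertex, must contain an edge (equivalently, that a subtree with empty edge set is a single vertex). This is true for profinite trees but is exactly the kind of step that needs the Ribes--Zalesskii machinery you cite, so you should point to the precise result rather than treat it as obvious. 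Your alternative reduction to finite factors via $A\sqcup B\cong\varprojlim A_{i}\sqcup B_{j}$ is closer in spirit to how Herfort and Ribes originally argued (their 1985 paper predates much of the profinite Bass--Serre formalism), though as you note it does not eliminate the hard step.
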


For a subset $S$ of $\F$ we denote by $\overline{S}^{\hat{\F}}$
the closure of $\iota\left(S\right)$ in the profinite completion
$\hat{\F}$.
\begin{lem}
\label{lem:centralizer of an element in hat F}For every $1\neq w\in\F$,
$C_{\hat{\F}}\left(w\right)=\overline{C_{\F}\left(w\right)}^{\hat{\F}}$.
In particular, $C_{\hat{\F}}\left(w\right)\cong\hat{\mathbb{Z}}$.
\end{lem}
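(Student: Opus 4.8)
\emph{Proof plan.} Since $w\neq1$, the centralizer $C_{\F}\left(w\right)$ is cyclic, generated by the (unique) root of $w$; write $C_{\F}\left(w\right)=\left\langle u\right\rangle $ with $u\in\F$ not a proper power, so that $w=u^{d}$ for some $d\neq0$. One inclusion is free: $\left\langle u\right\rangle $ commutes with $w$ and $C_{\hat{\F}}\left(w\right)$ is closed, so $\overline{C_{\F}\left(w\right)}^{\hat{\F}}=\overline{\left\langle u\right\rangle }^{\hat{\F}}\subseteq C_{\hat{\F}}\left(w\right)$. For the reverse inclusion the plan is to first gain control inside a suitable open subgroup via Theorem \ref{thm:centralizers in free products}, and then bootstrap. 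By M.~Hall's theorem there is a finite-index subgroup $N\leq\F$ in which $\left\langle u\right\rangle $ is a free factor, say $N=\left\langle u\right\rangle *L$. Passing to profinite completions, the profinite topology of $\F$ induces the full profinite topology on $N$, so $\overline{N}^{\hat{\F}}\cong\widehat{N}$ is an open subgroup of $\hat{\F}$; and since profinite completion turns free products into free profinite products, $\overline{N}^{\hat{\F}}\cong\widehat{\left\langle u\right\rangle }\sqcup\widehat{L}$, with $\overline{\left\langle u\right\rangle }^{\hat{\F}}$ the image of the free profinite factor $\widehat{\left\langle u\right\rangle }\cong\hat{\mathbb{Z}}$. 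As $w\in\left\langle u\right\rangle $, Theorem \ref{thm:centralizers in free products} gives $C_{\overline{N}^{\hat{\F}}}\left(w\right)\subseteq\overline{\left\langle u\right\rangle }^{\hat{\F}}$, and with the easy inclusion this forces $C_{\overline{N}^{\hat{\F}}}\left(w\right)=\overline{\left\langle u\right\rangle }^{\hat{\F}}\cong\hat{\mathbb{Z}}$. Since $C_{\overline{N}^{\hat{\F}}}\left(w\right)=C_{\hat{\F}}\left(w\right)\cap\overline{N}^{\hat{\F}}$ and $\overline{N}^{\hat{\F}}$ is open in $\hat{\F}$, I conclude that $Z:=\overline{\left\langle u\right\rangle }^{\hat{\F}}$ is a finite-index closed subgroup of $C:=C_{\hat{\F}}\left(w\right)$.

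It then remains to upgrade ``$Z$ has finite index in $C$'' to ``$Z=C$'', and for this I would first show $C\cong\hat{\mathbb{Z}}$. Closed subgroups of the free profinite group $\hat{\F}$ are projective, hence torsion-free; in particular so is $C$. Intersecting the finitely many $C$-conjugates of the open subgroup $Z$ produces an open normal $Z_{0}\trianglelefteq C$ with $Z_{0}\leq Z$, so $Z_{0}\cong\hat{\mathbb{Z}}$. Choose a topological generator $w_{0}$ of the open subgroup $Z_{0}\cap\overline{\left\langle w\right\rangle }^{\hat{\F}}$ of $Z_{0}$; then $C$ centralizes $w_{0}$ (it centralizes $\overline{\left\langle w\right\rangle }^{\hat{\F}}$), and $w_{0}$ generates an open subgroup of $Z_{0}\cong\hat{\mathbb{Z}}$, hence is not a zero-divisor there, so the conjugation homomorphism $C/Z_{0}\to\Aut\left(\hat{\mathbb{Z}}\right)\cong\hat{\mathbb{Z}}^{\times}$, which fixes $w_{0}$, is trivial; i.e.\ $Z_{0}$ is central in $C$. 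Moreover each pro-$p$ Sylow subgroup of $C$ is free pro-$p$ (projectivity) and contains a finite-index copy of $\mathbb{Z}_{p}$ (coming from $Z$), hence is $\cong\mathbb{Z}_{p}$; so the finite group $C/Z_{0}$ has cyclic Sylow subgroups and is therefore metacyclic. Pulling back a normal cyclic subgroup of $C/Z_{0}$ produces a closed subgroup $M\leq C$ that is a central extension of a finite cyclic group by $Z_{0}\cong\hat{\mathbb{Z}}$, hence abelian, and torsion-free with $\hat{\mathbb{Z}}$ of finite index, hence $M\cong\hat{\mathbb{Z}}$; and $C$ is generated by $M$ together with one further element acting on $M\cong\hat{\mathbb{Z}}$ by an automorphism that restricts to the identity on the open subgroup $Z_{0}$, hence is trivial. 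Thus $C$ is abelian, torsion-free, and virtually $\hat{\mathbb{Z}}$, whence $C\cong\hat{\mathbb{Z}}$.

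Finally I would determine $\left[C:Z\right]$ using Lubotzky's Theorem \ref{thm:nth powers are closed}. If $\left[C:Z\right]=m\geq2$, then $Z$ is the unique subgroup of index $m$ in the procyclic group $C$, so the topological generator $\iota\left(u\right)$ of $Z$ satisfies $\iota\left(u\right)=v^{m}$ for a topological generator $v$ of $C\subseteq\hat{\F}$; picking a prime $p\mid m$ shows $\iota\left(u\right)$ is a $p$-th power in $\hat{\F}$. But $u$ is not a $p$-th power in $\F$, and by Theorem \ref{thm:nth powers are closed} the set of $p$-th powers in $\F$ is closed in the profinite topology, so there is a finite-index $K\trianglelefteq\F$ with $uK$ disjoint from the $p$-th powers; projecting $\iota\left(u\right)=v^{p}$ to $\F/K$, where $v$ maps to $g_{0}K$ for some $g_{0}\in\F$ by density of $\F$ in $\hat{\F}$, gives $uK=g_{0}^{p}K$, a contradiction. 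Hence $m=1$ and $C_{\hat{\F}}\left(w\right)=\overline{\left\langle u\right\rangle }^{\hat{\F}}=\overline{C_{\F}\left(w\right)}^{\hat{\F}}\cong\hat{\mathbb{Z}}$.

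The main obstacle is the middle step. Theorem \ref{thm:centralizers in free products} only constrains centralizers \emph{inside} a free profinite product, which is why one must descend to the open subgroup $\overline{N}^{\hat{\F}}$; recovering the centralizer in all of $\hat{\F}$ then needs the structural analysis of the virtually-$\hat{\mathbb{Z}}$ group $C$ (projectivity and torsion-freeness of closed subgroups of $\hat{\F}$, procyclicity of its pro-$p$ Sylows, the metacyclic shape of $C/Z_{0}$), and excluding a proper power of $u$ from appearing in $\hat{\F}$ makes essential use of Lubotzky's theorem.
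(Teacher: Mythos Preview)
Your argument is correct, and the first half (reducing via Hall's theorem and Herfort--Ribes to $C_{\overline{N}^{\hat{\F}}}(w)=\overline{\langle u\rangle}^{\hat{\F}}$) matches the paper exactly. The second half, however, follows a genuinely different route. The paper dispatches elements $b\in\hat{\F}\setminus\overline{N}^{\hat{\F}}$ directly: writing $b=b_i h$ with $b_i\in\F$ a nontrivial coset representative and $h\in\overline{N}^{\hat{\F}}$, one notes that $b_i^{-1}wb_i$ is not conjugate to $w$ in $N$ (else $b_i\in C_{\F}(w)\cdot N=N$), and then invokes \emph{conjugacy separability} of free groups to conclude $b_i^{-1}wb_i$ is not conjugate to $w$ in $\overline{N}^{\hat{\F}}$ either, whence $b^{-1}wb\neq w$. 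This is short and uses only a classical property of free groups.

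Your approach instead treats $C=C_{\hat{\F}}(w)$ as an abstract virtually-$\hat{\mathbb{Z}}$ profinite group, proves $C\cong\hat{\mathbb{Z}}$ via projectivity, torsion-freeness, and an analysis of the conjugation action on the open normal $Z_0$, and then pins down the index $[C:Z]$ using Lubotzky's theorem on $d$th powers. Two remarks: (i) once you have shown $Z_0$ central and $C/Z_0$ finite, Schur's theorem gives $[C,C]$ finite, hence trivial by torsion-freeness, so $C$ is abelian immediately---the metacyclic detour is unnecessary; (ii) your route imports two nontrivial external inputs (projectivity of closed subgroups of free profinite groups, and Lubotzky's theorem) where the paper uses only conjugacy separability. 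The trade-off is that your argument is more ``structural'' and would generalize to other settings where one knows the centralizer is virtually procyclic, while the paper's argument is shorter and more self-contained for this specific lemma.
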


\begin{proof}
Let $u\in\F$ be a root of $w$ which is a non-power. So $C_{\F}\left(w\right)=\left\langle u\right\rangle \cong\mathbb{Z}$.
By Hall's theorem (see \cite[Proposition I.3.10]{Lyndon1977}), $u$
can be extended to a basis $\left\{ u=u_{1},u_{2},\ldots,u_{t}\right\} $
of a finite index subgroup $H\le\F$. If $A$ and $B$ are abstract
groups, then $\hat{A}\sqcup\hat{B}=\widehat{A*B}$ \cite[Exercise 9.1.1]{ribes2010profinite}.
So if we denote by $\overline{H}=\overline{H}^{\hat{\F}}$ the closure
of $H$ in $\hat{\F}$, then $\overline{H}=\overline{\left\langle u\right\rangle }^{\hat{\F}}\sqcup\overline{\left\langle u_{2},\ldots,u_{t}\right\rangle }^{\hat{\F}}$,
and by Theorem \ref{thm:centralizers in free products}, $C_{\overline{H}}\left(w\right)\le\overline{\left\langle u\right\rangle }^{\hat{\F}}\cong\hat{\mathbb{Z}}$.
But $\hat{\mathbb{Z}}$ is abelian, so $C_{\overline{H}}\left(w\right)=\overline{\left\langle u\right\rangle }^{\hat{\F}}=\overline{C_{\F}\left(w\right)}^{\hat{\F}}\cong\hat{\mathbb{Z}}$.

It remains to show that for every $b\in\hat{\F}\setminus\overline{H}$,
$b$ does not commute with $w$. Pick $e=b_{1},b_{2},\ldots,b_{s}\in\F$
representatives for the left cosets of $H$ in $\F$, which are thus
also representatives for the left cosets of $\overline{H}$ in $\hat{\F}$.
Assume that $b\in b_{i}\overline{H}$ for some $2\le i\le s$. If
$b_{i}^{-1}wb_{i}\notin\overline{H}$, then $b^{-1}wb\notin\overline{H}$,
so we may assume that $b_{i}^{-1}wb_{i}\in\overline{H}$ and thus
also $b^{-1}wb\in\overline{H}$. As $C_{\F}\left(w\right)=\left\langle u\right\rangle \le H$
and $b_{i}\notin H$, $b_{i}^{-1}wb_{i}$ is not conjugate to $w$
in $H$. Because free groups are conjugacy-separable (e.g.~\cite[Proposition I.4.8]{Lyndon1977}),
$b_{i}^{-1}wb_{i}$ is not conjugate to $w$ also in $\overline{H}$.
So $b^{-1}wb\ne w$.
\end{proof}
\begin{cor}
\label{cor:every profinite root of a word is in F}Every root of $w\in\F$
in $\hat{\F}$ belongs to $\F$.
\end{cor}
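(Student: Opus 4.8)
The plan is to reduce the statement to elementary arithmetic in $\hat{\mathbb{Z}}$, with Lemma \ref{lem:centralizer of an element in hat F} doing the real work. First I would dispose of $w=1$: the claim then amounts to $\hat{\F}$ being torsion-free, which holds because free profinite groups are torsion-free (and in any event only the case $w\neq1$ is used later). So assume $1\neq w\in\F$ and let $b\in\hat{\F}$ be a root of $\iota(w)$, say $b^{n}=\iota(w)$ with $n\geq1$. Since $b$ commutes with $b^{n}=\iota(w)$, we have $b\in C_{\hat{\F}}(w)$, and Lemma \ref{lem:centralizer of an element in hat F} gives $C_{\hat{\F}}(w)=\overline{\langle u\rangle}^{\hat{\F}}\cong\hat{\mathbb{Z}}$, where $u\in\F$ is the non-power root of $w$; in particular $w=u^{m}$ for some $0\neq m\in\mathbb{Z}$.

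Next I would fix an isomorphism $\phi\colon C_{\hat{\F}}(w)\to\hat{\mathbb{Z}}$ with $\phi(u)=1$. Such a $\phi$ exists: $\langle u\rangle$ is dense in $C_{\hat{\F}}(w)$, so $u$ is a topological generator, hence its image under any isomorphism onto $\hat{\mathbb{Z}}$ is a unit of $\hat{\mathbb{Z}}$, which we normalize to $1$ by composing with an automorphism of $\hat{\mathbb{Z}}$; under this $\phi$ the subgroup $\langle u\rangle$ maps onto $\mathbb{Z}\leq\hat{\mathbb{Z}}$. Applying $\phi$ to $b^{n}=u^{m}$ turns it into $n\beta=m$ in additive $\hat{\mathbb{Z}}$, where $\beta=\phi(b)$. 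As $\hat{\mathbb{Z}}$ is torsion-free this equation has at most one solution, and a solution $\beta\in\hat{\mathbb{Z}}$ forces $m\in n\hat{\mathbb{Z}}$; projecting to $\mathbb{Z}/n\mathbb{Z}$ (equivalently, comparing $p$-adic valuations) this gives $n\mid m$ in $\mathbb{Z}$. Hence $\beta=m/n\in\mathbb{Z}$, so $b=u^{m/n}\in\langle u\rangle\leq\F$.

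Because Lemma \ref{lem:centralizer of an element in hat F} supplies the structural heart of the argument, I do not expect a genuine obstacle: what remains is the routine normalization of the identification $C_{\hat{\F}}(w)\cong\hat{\mathbb{Z}}$ and the divisibility computation in $\hat{\mathbb{Z}}$, together with the (standard) torsion-freeness of $\hat{\F}$ if one wants to include $w=1$.
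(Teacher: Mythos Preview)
Your proof is correct, but it takes a genuinely different route from the paper's. The paper first invokes Lubotzky's Theorem~\ref{thm:nth powers are closed} to produce some $v\in\F$ with $w=v^{m}$ (using that $m$th powers are profinitely closed), and only then places both the profinite root $x$ and $v$ inside $C_{\hat{\F}}(w)\cong\hat{\mathbb{Z}}$, concluding $x=v$ from $(x^{-1}v)^{m}=1$ and torsion-freeness. You bypass Lubotzky entirely: once $b\in C_{\hat{\F}}(w)=\overline{\langle u\rangle}\cong\hat{\mathbb{Z}}$, the equation $n\beta=m$ in $\hat{\mathbb{Z}}$ with $m\in\mathbb{Z}$ forces $n\mid m$ by reducing modulo $n$, and hence $b=u^{m/n}\in\F$.

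Your argument is in this sense more self-contained: it shows that Corollary~\ref{cor:every profinite root of a word is in F} follows from Lemma~\ref{lem:centralizer of an element in hat F} and elementary properties of $\hat{\mathbb{Z}}$ alone, without importing the separate result that $d$th powers form a closed set. (In fact, combined with a routine compactness argument, your approach yields an alternative proof of Theorem~\ref{thm:nth powers are closed} itself.) The paper's approach, on the other hand, keeps the arithmetic in $\hat{\mathbb{Z}}$ to the bare minimum of torsion-freeness, at the cost of an external citation.
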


\begin{proof}
Since $\hat{\F}$ is torsion free, we may assume $w\ne1$. Assume
that $w=x^{m}$ with $x\in\hat{\F}$ and $m\in\mathbb{Z}_{\ge1}$.
By Theorem \ref{thm:nth powers are closed}, the set of $m$th powers
in $\F$ is closed in the profinite topology, which means that $w=v^{m}$
for some $v\in\F$. Let $u\in\F$ be a non-power such that $v=u^{\ell}$
for some $\ell\in\mathbb{Z}_{\ge1}$. By Lemma \ref{lem:centralizer of an element in hat F},
$C_{\hat{\F}}\left(w\right)=\overline{\left\langle u\right\rangle }^{\hat{\F}}\cong\hat{\mathbb{Z}}$.
As $x,v\in C_{\hat{\F}}\left(w\right)$ and $\hat{\mathbb{Z}}$ is
abelian, $x$ and $v$ commute. So $\left(x^{-1}v\right)^{m}=x^{-m}v^{m}=1$.
But $\hat{\mathbb{Z}}$ is torsion-free and thus $x=v$.
\end{proof}
We now have the tools to prove Theorem \ref{thm:from word to its powers}.
\begin{proof}[Proof of Theorem \ref{thm:from word to its powers}]
 The theorem is clear for $d=0$. Now let $d\in\mathbb{Z}\setminus\left\{ 0\right\} $
and assume that $w\in\F$ is profinitely rigid and that $w_{2}\in\F$
induces the same measures on finite groups as $w^{d}$. From Theorem
\ref{thm:nth powers are closed} it follows that $w_{2}$ is a $d$th
power, so $w_{2}=v^{d}$ for some $v\in\F$. By Theorem \ref{thm:six equivalences}
there is an automorphism $\theta\in\mathrm{Aut}\hat{\F}$ with $\theta\left(w^{d}\right)=v^{d}$.
So $\theta\left(w\right)$ is a $d$th root of $v^{d}$ in $\hat{\F}$,
and by Corollary \ref{cor:every profinite root of a word is in F},
$\theta\left(w\right)=v$, namely, $w\aute{\hat{\F}}v$. But $w$
is profinitely rigid so $w\auteq v$. Thus also $w^{d}\aute{\F}v^{d}=w_{2}$.
\end{proof}

\subsection{Powers in symmetric groups}

For completeness, we also include a proof of the fact that it is enough
to consider measures on the symmetric groups $S_{N}$ (for all $N$)
to obtain the case of primitive powers in Theorem \ref{thm:main}.
Note that this is not true for the words $\left[x,y\right]^{d}$.
Indeed, the word $\left[x,y\right]=xyx^{-1}y^{-1}$ and the word $xyxy^{-1}$,
which lie in different $\mathrm{Aut}\F$-orbits, induce the exact
same measure on $S_{N}$ for all $N$: in both words one takes a uniformly
random permutation (the image of $x$) and multiplies it with a uniform
random conjugate (the image of $yx^{-1}y^{-1}$ or of $yxy^{-1}$).
It is plausible that measures on the alternating groups $\mathrm{Alt}\left(N\right)$
do distinguish the orbit of $\left[x,y\right]$ from all other orbits,
but we do not know whether this is true or not.
\begin{prop}
\label{prop:primitive powers from S_N}If $w$ induces the same measures
as $x^{d}$ on the symmetric group $S_{N}$ for all $N$, then $w\auteq x^{d}$.
\end{prop}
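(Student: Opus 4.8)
The plan is to mimic the strategy used above for $\left[x,y\right]^{d}$ in the paragraph ``using thm 1.5 for $[x,y]$'', but replacing Khelif's theorem (which is about commutators in \emph{all} finite quotients) by an $S_{N}$-only version of Lubotzky's theorem on $d$th powers. Concretely, I would first establish the following variant of Theorem \ref{thm:nth powers are closed}: if $w\in\F$ maps to a $d$th power in $S_{N}$ for every $N$, then $w$ is a $d$th power in $\F$. This is exactly the content of Theorem \ref{thm:Lubotzky with homomorphisms to S_N} referenced in the text, so I may assume it. Since $w$ and $x^{d}$ induce the same measure on each $S_{N}$, their supports agree; the support of the $x^{d}$-measure on $S_{N}$ consists exactly of the permutations that are $d$th powers, so every element in the support of the $w$-measure on $S_{N}$ is a $d$th power in $S_{N}$. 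Taking $S_{N}=\nicefrac{\F_{r}}{K}$ for the relevant finite quotients — or more directly, noting that a free group embeds its finite quotients as images of homomorphisms to large enough $S_{N}$ via the regular representation — gives that $w$ is a $d$th power in every finite quotient realized inside some $S_{N}$, hence (by the $S_{N}$-version of Lubotzky) $w=v^{d}$ for some $v\in\F$.

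Next, I would pass to a non-power root: write $v=u^{e}$ with $u\in\F$ a non-power and $e\ge1$, so $w=u^{de}$. Now I compare fixed-point statistics. Since $w$ and $x^{d}$ induce the same $S_{N}$-measure for all $N$, we have $\tr_{u^{de}}\left(N\right)=\tr_{w}\left(N\right)=\tr_{x^{d}}\left(N\right)$ for all $N$. By Theorem \ref{thm:PP15 main}, $\tr_{x^{d}}\left(N\right)=1+C\cdot N^{1-\pi\left(x^{d}\right)}+O\left(N^{-\pi\left(x^{d}\right)}\right)$; a direct check shows $\left\langle x^{d}\right\rangle$ has a unique proper algebraic extension, namely $\left\langle x\right\rangle$ (of rank $1$), corresponding to the root, so $\pi\left(x^{d}\right)=1$ and $\tr_{x^{d}}\left(N\right)=1+\tfrac{1}{N-1}$ (indeed a $d$th-power-random permutation has on average $1+\tfrac{1}{N-1}$ fixed points). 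The real point is the comparison: I apply Theorem \ref{thm:free words inside a word} with the word $x^{d}\in\F_{1}$ — which is not contained in a proper free factor of $\F_{1}$ since $\F_{1}$ has none — and with the single substituted word $u\in\F=\F_{r}$, which is free (being a single non-trivial element) in $\F$. If $u$ were \emph{not} primitive, i.e. did not generate a free factor of $\F$, then Theorem \ref{thm:free words inside a word} would give $\tr_{x^{d}}\left(N\right)<\tr_{\left(x^{d}\right)\left(u\right)}\left(N\right)=\tr_{u^{d}}\left(N\right)$ for all large $N$. But $\tr_{u^{d}}\left(N\right)$ and $\tr_{u^{de}}\left(N\right)$ — wait, I need these to match up, so let me instead substitute $u^{e}=v$: apply Theorem \ref{thm:free words inside a word} with $w=x^{d}\in\F_{1}$ and the substituted word $v=u^{e}\in\F$; then $v$ is free in $\F$, and $\left(x^{d}\right)\left(v\right)=v^{d}=w$, giving $\tr_{x^{d}}\left(N\right)<\tr_{w}\left(N\right)$ for all large $N$ unless $\langle v\rangle$ is a free factor of $\F$, i.e. unless $v$ is primitive. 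Since $\tr_{x^{d}}\left(N\right)=\tr_{w}\left(N\right)$, we conclude $v=u^{e}$ is primitive; in particular $e=1$ and $u=v$ is primitive (a proper power is never primitive).

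Finally, with $u$ primitive and $w=u^{d}$, any automorphism of $\F$ sending $u$ to $x$ sends $w=u^{d}$ to $x^{d}$, so $w\auteq x^{d}$, which is the claim. The main obstacle in this argument is the first step — getting the $S_{N}$-only version of Lubotzky's theorem, i.e. deducing that $w$ is literally a $d$th power in $\F$ from the mere fact that its image in every $S_{N}$ is a $d$th power; this is precisely why the text promises a separate proof (Theorem \ref{thm:Lubotzky with homomorphisms to S_N}) and is the part that cannot be shortcut. Everything after that is a routine combination of Theorem \ref{thm:free words inside a word} (or \ref{thm:PP15 main}) with the elementary observation that a genuine proper power fails to be primitive.
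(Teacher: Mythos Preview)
Your approach is the same as the paper's: use Theorem \ref{thm:Lubotzky with homomorphisms to S_N} to write $w=v^{d}$, then apply Theorem \ref{thm:free words inside a word} with $x^{d}\in\F_{1}$ and the substitution $x\mapsto v$ to force $v$ primitive. The detour through a non-power root $u$ with $v=u^{e}$ is unnecessary (once $v$ is primitive you are already done), and your aside is incorrect: for general $d$ the subgroup $\left\langle x^{d}\right\rangle$ has $\delta(d)-1$ proper algebraic extensions in $\F_{1}$ (namely $\left\langle x^{m}\right\rangle$ for each proper divisor $m$ of $d$), and in fact $\tr_{x^{d}}(N)=\delta(d)$ for $N\ge d$, not $1+\tfrac{1}{N-1}$ --- but this computation plays no role in your argument.
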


We begin with a lemma which identifies powers in $S_{N}$. Denote
by $c_{t}\left(\sigma\right)$ the number of $t$-cycles in the cycle
decomposition of $\sigma\in S_{N}$, and for a prime $p$ and a positive
integer $n$, denote
\[
\nu_{p}\left(n\right)=\max\left\{ e\in\mathbb{Z}_{\ge0}~\,\middle|\,~~p^{e}\mid n\right\} .
\]

\begin{lem}
\label{lem:when a permutation is a dth power}A permutation $\sigma\in S_{N}$
is a $d$th power if and only if for all $t\in\mathbb{Z}_{\ge1}$
we have
\[
\left(\prod_{p\mid t}p^{\nu_{p}\left(d\right)}\right)\mid c_{t}\left(\sigma\right),
\]
the product being over all prime divisors of $t$. In particular,
when $d\mid t$, the condition on $c_{t}$ is that $d\mid c_{t}\left(\sigma\right)$.
\end{lem}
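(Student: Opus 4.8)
The plan is to reduce everything to the behavior of $d$th powers on a single cycle, and then use the fact that a permutation decomposes as a disjoint union of cycles. The core observation is that raising a single $t$-cycle $\tau$ to the $d$th power yields a product of $\gcd(d,t)$ disjoint cycles, each of length $t/\gcd(d,t)$. Consequently, the cycle type of $\sigma^d$ is obtained from that of $\sigma$ by a simple combinatorial operation, and a permutation $\rho$ is a $d$th power if and only if its cycle type lies in the image of this operation. So the task is purely to characterize, for each cycle length $s$, which multiplicities $c_s(\rho)$ are achievable.

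First I would record the single-cycle computation: if $\tau$ is an $s$-cycle, $\tau^d$ consists of $\gcd(d,s)$ cycles of length $s/\gcd(d,s)$. Next, given a target permutation $\rho\in S_N$, I want to reverse-engineer a $\sigma$ with $\sigma^d=\rho$. Group the cycles of $\rho$ by length; for each length $s$, we must produce the $c_s(\rho)$ cycles of length $s$ using $d$th powers of longer cycles of $\sigma$. A cycle of $\sigma$ of length $s'$ contributes $\gcd(d,s')$ cycles of length $s'/\gcd(d,s')=s$ to $\rho$; so the possible ``bundle sizes'' in which cycles of length $s$ in $\rho$ can be grouped are exactly the integers $\gcd(d,s')$ over all $s'$ with $s'/\gcd(d,s')=s$. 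Writing $s' = s\cdot g$ with $g\mid d$ (so $g=\gcd(d,s')$ requires $g \mid \gcd(d, sg)$, which holds automatically since $g\mid d$; one checks $\gcd(d,sg)$ can indeed equal $g$ when $\gcd(d/g,\,s)=1$, and in general the achievable bundle sizes form the set of valid $g$), the achievable bundle sizes generate, additively, exactly the set of nonnegative integers divisible by $\prod_{p\mid s}p^{\nu_p(d)}$. This last number-theoretic identity — that $\gcd$ of all valid bundle sizes equals $\prod_{p\mid s}p^{\nu_p(d)}$, and that every multiple of it is attainable as a nonnegative combination — is the crux.

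To nail that identity: for a prime $p$, a bundle size $g$ contributes a factor $p^{\nu_p(g)}$, and $g$ is of the form $\gcd(d,s')$ with $s'/g=s$. If $p\mid s$, then $p\nmid s'/g = s'/\gcd(d,s')$, which forces $\nu_p(s')\le\nu_p(d)$, hence $\nu_p(g)=\nu_p(s')$ can be anything from $0$ up to $\nu_p(d)$; taking $g=p^{\nu_p(d)}$ (times a unit coprime to relevant primes) is realizable, so the minimal achievable $p$-part is exactly $p^{\nu_p(d)}$. If $p\nmid s$, then $g=1$ is achievable (take $s'=s$, so $\gcd(d,s)$... wait, need $\gcd(d,s)=1$), so no constraint at $p$. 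Combining over all primes via CRT gives that the set of achievable bundle sizes has gcd exactly $m_s:=\prod_{p\mid s}p^{\nu_p(d)}$, and since $1$ is not generally achievable one argues that nonnegative integer combinations of the achievable sizes fill out precisely $m_s\mathbb{Z}_{\ge0}$ (using that $m_s$ itself is achievable as a single bundle, when $N$ is large enough to accommodate the corresponding longer cycle; for the ``only if'' direction no largeness is needed). Therefore $\rho$ is a $d$th power iff $m_s\mid c_s(\rho)$ for every $s$, which is the claimed criterion. The special case $d\mid t$ is immediate since then $\nu_p(d)$ for $p\mid t$ multiply to a divisor of $t$ whose product over primes dividing $t$ — one checks directly — forces the condition to reduce to $d\mid c_t(\sigma)$ only when additionally every prime dividing $d$ divides $t$; more carefully, $m_t=\prod_{p\mid t}p^{\nu_p(d)}$, and when $d\mid t$ every prime of $d$ divides $t$, so $m_t=\prod_{p\mid d}p^{\nu_p(d)}=d$.

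**The main obstacle** I anticipate is the bookkeeping in the number-theoretic step: carefully verifying which values $g$ are actually realizable as $\gcd(d,s')$ for some $s'$ with $s'/g=s$, and then confirming that nonnegative integer combinations of exactly those values give all of $m_s\mathbb{Z}_{\ge0}$ and nothing more. One must be slightly careful that ``achievable'' bundle sizes depend on solvability of $\gcd(d, sg)=g$, i.e. on $\gcd(d/g, s)=1$ together with $g\mid d$; this is always solvable for $g=m_s$ (take $g=m_s$, then $d/g$ is coprime to $s$ by construction) and for $g=1$ precisely when $\gcd(d,s)=1$, which suffices since we only ever need the single value $m_s$ on the constructive side. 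The rest — disjointness of cycles, that distinct lengths in $\rho$ come from disjoint parts of $\sigma$, and assembling the global $\sigma$ — is routine.
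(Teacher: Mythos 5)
Your overall architecture is the same as the paper's: reduce to the single-cycle computation (the $d$th power of an $\ell$-cycle is a product of $\gcd(d,\ell)$ cycles of length $\ell/\gcd(d,\ell)$), show for the ``only if'' direction that each such bundle of $t$-cycles has size divisible by $m_t:=\prod_{p\mid t}p^{\nu_p(d)}$, and for the ``if'' direction group the $t$-cycles of $\sigma$ into bundles of size $m_t$, each of which is the $d$th power of a single cycle of length $t\cdot m_t$ (checking $\gcd(d,t m_t)=m_t$). That is exactly the paper's proof.

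However, your justification of what you yourself call the crux --- that every achievable bundle size $g=\gcd(d,s')$ with $s'/\gcd(d,s')=s$ is divisible by $m_s$ --- is wrong as written. You claim that $p\mid s$ forces $\nu_p(s')\le\nu_p(d)$ and hence that $\nu_p(g)$ ``can be anything from $0$ up to $\nu_p(d)$.'' Both the inequality and the conclusion are reversed: since $p$ divides $s=s'/\gcd(d,s')$, we get $\nu_p(s')>\nu_p\bigl(\gcd(d,s')\bigr)=\min\bigl(\nu_p(d),\nu_p(s')\bigr)$, which forces $\nu_p(s')>\nu_p(d)$ and therefore $\nu_p(g)=\nu_p(d)$ \emph{exactly}. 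If $\nu_p(g)$ really could be $0$ for some $p\mid s$, the ``only if'' direction of the lemma would fail, since a single cycle of $\sigma$ could then contribute a number of $s$-cycles not divisible by $m_s$; your next assertion, that ``the minimal achievable $p$-part is exactly $p^{\nu_p(d)}$,'' contradicts the sentence preceding it and is precisely the statement that needs proof. Once this valuation computation is fixed, the argument closes. Two smaller points: the caveat about $N$ being ``large enough'' on the constructive side is unnecessary (and would be a problem if it were needed, since the lemma is stated for every $N$) --- the preimage cycle of length $s\cdot m_s$ is supported on exactly the $s\cdot m_s$ points occupied by the bundle it produces; and the general analysis of nonnegative integer combinations of all achievable bundle sizes is superfluous, as bundles of the single size $m_s$ already suffice.
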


\begin{proof}
First, assume that $\sigma$ is a $d$th power. The $d$th power of
an $\ell$-cycle is the union of $\gcd\left(d,\ell\right)$ cycles,
each of length $\frac{\ell}{\gcd\left(d,\ell\right)}$. We need to
show that if $p\mid t$ and $t=\frac{\ell}{\gcd\left(d,\ell\right)}$
then $p^{\nu_{p}\left(d\right)}\mid\gcd\left(d,\ell\right)$. But
if $p\mid\frac{\ell}{\gcd\left(d,\ell\right)}$ then $\nu_{p}\left(\ell\right)>\nu_{p}\left(d\right)$
and so $\nu_{p}\left(\gcd\left(d,\ell\right)\right)=\nu_{p}\left(d\right)$.

For the converse implication, it is enough to prove the claim in the
case where $\sigma$ is simply the product of $\left(\prod_{p\mid t}p^{\nu_{p}\left(d\right)}\right)$
disjoint cycles of length $t$. In this case, there is a cycle of
length $t\cdot\left(\prod_{p\mid t}p^{\nu_{p}\left(d\right)}\right)$
whose $d$th power is $\sigma$.
\end{proof}
\begin{lem}
\label{lem:number of t-cycles in an a-power}Assume that $b,t\in\mathbb{Z}_{\ge1}$
with $b\mid t$. Let $N\ge2bt$ and let $\sigma\in S_{N}$ be a uniform
random permutation. Then
\[
\mathbb{E}\left[c_{t}\left(\sigma^{b}\right)\right]=\frac{1}{t},~~\mathrm{and}~~\mathbb{E}\left[c_{t}^{~2}\left(\sigma^{b}\right)\right]=\frac{b}{t}+\frac{1}{t^{2}}.
\]
\end{lem}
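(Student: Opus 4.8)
The plan is to reduce the entire statement to the single combinatorial identity
\[
c_{t}\bigl(\sigma^{b}\bigr)=b\cdot c_{bt}\left(\sigma\right),
\]
valid for \emph{every} $\sigma\in S_{N}$ whenever $b\mid t$, and then to invoke the classical formulas for the moments of cycle counts of a uniform random permutation. First I would establish the identity. Recall (as in the proof of Lemma \ref{lem:when a permutation is a dth power}) that the $b$-th power of an $\ell$-cycle is a disjoint union of $\gcd\left(b,\ell\right)$ cycles, each of length $\ell/\gcd\left(b,\ell\right)$. Hence a $t$-cycle of $\sigma^{b}$ can only arise from a cycle of $\sigma$ whose length $\ell$ satisfies $\ell/\gcd\left(b,\ell\right)=t$. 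Writing $g=\gcd\left(b,\ell\right)$ gives $\ell=tg$; since $b\mid t$ we have $b\mid tg$, so $g=\gcd\left(b,tg\right)=b$ and therefore $\ell=bt$. Thus the only cycles of $\sigma$ contributing to $c_{t}\left(\sigma^{b}\right)$ are its $bt$-cycles, and each of them contributes exactly $\gcd\left(b,bt\right)=b$ cycles of length $bt/b=t$. This proves the displayed identity, and consequently $\mathbb{E}\left[c_{t}\left(\sigma^{b}\right)\right]=b\cdot\mathbb{E}\left[c_{bt}\left(\sigma\right)\right]$ and $\mathbb{E}\left[c_{t}^{\,2}\left(\sigma^{b}\right)\right]=b^{2}\cdot\mathbb{E}\left[c_{bt}^{\,2}\left(\sigma\right)\right]$.

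It then remains to compute $\mathbb{E}\left[c_{m}\left(\sigma\right)\right]$ and $\mathbb{E}\left[c_{m}^{\,2}\left(\sigma\right)\right]$ for $m=bt$ and a uniform $\sigma\in S_{N}$ with $N\ge2m$. Here I would use the standard fact about the falling factorial moments of cycle counts: for $km\le N$, the quantity $\mathbb{E}\left[c_{m}\left(c_{m}-1\right)\cdots\left(c_{m}-k+1\right)\right]$ equals the number of ordered $k$-tuples of pairwise disjoint $m$-cycles in $S_{N}$ times the probability that a fixed such tuple all appears in $\sigma$, which a direct count shows equals $m^{-k}$. The case $k=1$ (using $N\ge m$) gives $\mathbb{E}\left[c_{m}\left(\sigma\right)\right]=1/m$, so $\mathbb{E}\left[c_{t}\left(\sigma^{b}\right)\right]=b/(bt)=1/t$. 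The case $k=2$ (using $N\ge2m=2bt$) gives $\mathbb{E}\left[c_{m}\left(c_{m}-1\right)\right]=1/m^{2}$, hence $\mathbb{E}\left[c_{m}^{\,2}\left(\sigma\right)\right]=1/m^{2}+1/m$, and therefore
\[
\mathbb{E}\bigl[c_{t}^{\,2}\left(\sigma^{b}\right)\bigr]=b^{2}\left(\frac{1}{(bt)^{2}}+\frac{1}{bt}\right)=\frac{1}{t^{2}}+\frac{b}{t}.
\]

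There is no serious obstacle here; the points requiring care are the cycle arithmetic in the identity above --- in particular the fact that $bt$ is the \emph{unique} relevant cycle length, which is exactly where the hypothesis $b\mid t$ enters --- and the bookkeeping of the range of $N$ for which the factorial moment formula is exact. Both the first moment (needing $N\ge bt$) and the second moment (needing $N\ge2bt$) are covered by the hypothesis $N\ge2bt$. One could instead avoid quoting the factorial-moment formula altogether and compute the two expectations by summing directly over the cycle length $bt$ (equivalently, over partitions containing one or two parts equal to $bt$), but the factorial-moment route is cleaner and makes the role of the bound $N\ge2bt$ transparent.
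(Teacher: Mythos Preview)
Your proof is correct and follows essentially the same approach as the paper: establish the deterministic identity $c_{t}(\sigma^{b})=b\,c_{bt}(\sigma)$ when $b\mid t$, then plug in the classical first and second moments of $c_{bt}(\sigma)$ for a uniform permutation (the paper simply quotes these, while you derive them via the falling factorial moment formula). Your extra care in showing that $bt$ is the \emph{unique} cycle length contributing, and in tracking the thresholds $N\ge bt$ and $N\ge 2bt$, is a welcome elaboration but not a departure from the paper's argument.
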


\begin{proof}
As $b\mid t$, a $t$-cycle in $\sigma^{b}$ must come from a $bt$-cycle
in $\sigma$, and each $bt$-cycle in $\sigma$ gives rise to $b$
disjoint cycles of length $t$ in $\sigma^{b}$. Hence $c_{t}\left(\sigma^{b}\right)=bc_{bt}\left(\sigma\right)$.
When $N\ge bt$, the expected number of $bt$-cycles in $\sigma$
in $\frac{1}{bt}$, and so $\mathbb{E}\left[c_{t}\left(\sigma^{b}\right)\right]=b\mathbb{E}\left[c_{bt}\left(\sigma\right)\right]=\frac{1}{t}$.
Likewise, when $N\ge2bt$, $\mathbb{E}\left[c_{bt}^{~2}\left(\sigma\right)\right]=\frac{1}{bt}+\frac{1}{b^{2}t^{2}}$,
so
\[
\mathbb{E}\left[c_{t}^{~2}\left(\sigma^{b}\right)\right]=b^{2}\mathbb{E}\left[c_{bt}^{~2}\left(\sigma\right)\right]=b^{2}\left(\frac{1}{bt}+\frac{1}{b^{2}t^{2}}\right)=\frac{b}{t}+\frac{1}{t^{2}}.
\]
\end{proof}
We can now give a proof of Lubotzky's Theorem \ref{thm:nth powers are closed}
using homomorphic images in symmetric groups.
\begin{thm}
\label{thm:Lubotzky with homomorphisms to S_N}Let $w\in\F$. If $\varphi\left(w\right)\in S_{N}$
is a $d$th power for every $N\in\mathbb{Z}_{\ge1}$ and every $\varphi\in\mathrm{Hom}\left(\F,S_{N}\right)$,
then $w$ is a $d$th power.
\end{thm}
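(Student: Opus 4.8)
The plan is to reduce the statement to a second‑moment estimate for the number of long cycles of $\varphi\left(u\right)$, which is then controlled by the machinery of Section~\ref{sec:Fixed-points-of}. We may assume $d\ge1$ (replacing $d$ by $-d$ if necessary) and $w\ne1$, and write $w=u^{n}$ with $u\in\F$ not a proper power and $n\ge1$. Since the centralizer of $u^{n}$ in $\F$ equals $\left\langle u\right\rangle$, every $d$th root of $w$ in $\F$ lies in $\left\langle u\right\rangle$, so $w$ is a $d$th power in $\F$ if and only if $d\mid n$. Assume toward a contradiction that $d\nmid n$ and fix a prime $p$ with $a:=\nu_{p}\left(n\right)<\nu_{p}\left(d\right)=:b$. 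Put $t:=np$ and $D_{t}:=\prod_{q\mid t}q^{\nu_{q}\left(d\right)}$; then $\nu_{p}\left(D_{t}\right)=b>a$, so $D_{t}\nmid n$ and $D':=D_{t}/\gcd\left(D_{t},n\right)\ge2$. Because $n\mid t$, a cycle of length $\ell$ of a permutation $\sigma$ contributes $t$-cycles to $\sigma^{n}$ only when $\ell=tn$, and then exactly $\gcd\left(n,tn\right)=n$ of them; hence $c_{t}\left(\varphi\left(w\right)\right)=n\cdot c_{tn}\left(\varphi\left(u\right)\right)$ for every $\varphi\in\Hom\left(\F,S_{N}\right)$. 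By Lemma~\ref{lem:when a permutation is a dth power}, if $\varphi\left(w\right)$ is a $d$th power in $S_{N}$ then $D_{t}\mid n\cdot c_{tn}\left(\varphi\left(u\right)\right)$, i.e.\ $D'\mid c_{tn}\left(\varphi\left(u\right)\right)$.

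Granting the hypothesis of the theorem, this holds for every $N$ and every $\varphi$. Drawing $\varphi$ uniformly from $\Hom\left(\F,S_{N}\right)$ and writing $Y_{N}:=c_{tn}\left(\varphi\left(u\right)\right)$, the integer $Y_{N}$ is always a multiple of $D'\ge2$, hence $0$ or $\ge2$, so $\mathbb{E}\left[Y_{N}\right]\ge2\Pr\left[Y_{N}\ge1\right]$. On the other hand, applying the Bonferroni inequality to $c_{tn}$ written as a sum of indicators of candidate $tn$-cycles, $\Pr\left[Y_{N}\ge1\right]\ge\mathbb{E}\left[Y_{N}\right]-\mathbb{E}\bigl[\binom{Y_{N}}{2}\bigr]$. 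It therefore suffices to prove that, as $N\to\infty$,
\[
\mathbb{E}\left[Y_{N}\right]\longrightarrow\frac{1}{tn}\qquad\text{and}\qquad\mathbb{E}\Bigl[\binom{Y_{N}}{2}\Bigr]\longrightarrow\frac{1}{2\left(tn\right)^{2}},
\]
these being the first two moments of a Poisson variable of mean $\tfrac{1}{tn}$: since $tn\ge2$, these limits give $\Pr\left[Y_{N}\ge1\right]>\tfrac12\mathbb{E}\left[Y_{N}\right]$ for all large $N$, contradicting $\mathbb{E}\left[Y_{N}\right]\ge2\Pr\left[Y_{N}\ge1\right]$. Hence $d\mid n$, as claimed.

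To compute the two moments of $c_{\ell}\left(\varphi\left(u\right)\right)$ with $\ell:=tn$, I would use Möbius inversion over orbit sizes together with the trace asymptotics of \cite{PP15}. For each $m\ge1$ we have $\Pr\left[\varphi\left(u^{m}\right)\left(1\right)=1\right]=\tfrac{1}{N}\,\Phi_{\left\langle u^{m}\right\rangle,\F}\left(N\right)$ (and $\varphi\left(u^{m}\right)\left(1\right)=1$ exactly when the $\left\langle\varphi\left(u\right)\right\rangle$-orbit of $1$ has size dividing $m$). For $m\ge2$ the extension $\left\langle u^{m}\right\rangle\alg\left\langle u\right\rangle$ has rank $1$, and the proper algebraic extensions of $\left\langle u^{m}\right\rangle$ of that minimal rank are precisely the $\left\langle u^{c}\right\rangle$ over the $\tau\left(m\right)-1$ proper divisors $c$ of $m$; so Theorem~\ref{thm:PP15 main} gives $\Phi_{\left\langle u^{m}\right\rangle,\F}\left(N\right)=\tau\left(m\right)+O\left(N^{-1}\right)$ for every $m\ge1$ (here $\tau$ is the number-of-divisors function, and the case $m=1$ reads $1+O\left(N^{-1}\right)$). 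Since $\mu*\tau$ is identically $1$, a single Möbius inversion now yields $\Pr\left[\text{the }\left\langle\varphi\left(u\right)\right\rangle\text{-orbit of }1\text{ has size }\ell\right]=\tfrac{1}{N}\sum_{m\mid\ell}\mu\left(\ell/m\right)\Phi_{\left\langle u^{m}\right\rangle,\F}\left(N\right)=\tfrac{1}{N}\bigl(1+O\left(N^{-1}\right)\bigr)$, hence $\mathbb{E}\left[c_{\ell}\left(\varphi\left(u\right)\right)\right]=\tfrac{1}{\ell}\bigl(1+O\left(N^{-1}\right)\bigr)$. The second moment is treated the same way, now using the joint fixed-point expectations $\mathbb{E}\left[\mathrm{tr}\left(\varphi\left(u^{m}\right)\right)\mathrm{tr}\left(\varphi\left(u^{m'}\right)\right)\right]$ in place of the single $\Phi$'s -- equivalently, the $\Phi$-values of the rank-$\le2$ subgroups obtained by gluing $\left\langle u^{m}\right\rangle$ and $\left\langle u^{m'}\right\rangle$ along $\left\langle u^{\gcd\left(m,m'\right)}\right\rangle$ -- followed by a double Möbius inversion and subtraction of the ``same-orbit'' terms, which are $O\left(1\right)$. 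When $u$ happens to be primitive -- in particular when $w=x^{d}$, the case required for Proposition~\ref{prop:primitive powers from S_N} -- $\varphi\left(u\right)$ is simply a uniformly random permutation, and both moments are delivered directly by Lemma~\ref{lem:number of t-cycles in an a-power}.

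The hard part will be exactly this last ingredient for a non-primitive $u$: one must extend the trace asymptotics of \cite{PP15} to the joint traces $\mathbb{E}\left[\mathrm{tr}\left(\varphi\left(u^{m}\right)\right)\mathrm{tr}\left(\varphi\left(u^{m'}\right)\right)\right]$, i.e.\ control how $\left\langle u^{m}\right\rangle$ and $\left\langle u^{m'}\right\rangle$ can simultaneously algebraically over-extend inside $\F$. Once that expansion is available the rest is routine bookkeeping; alternatively one could invoke (or prove) the stronger fact that for a non-power $u$ the cycle counts $\bigl(c_{\ell}\left(\varphi\left(u\right)\right)\bigr)_{\ell\ge1}$ converge jointly to independent $\mathrm{Pois}\left(1/\ell\right)$ variables, which supplies both moments at once.
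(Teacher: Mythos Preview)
Your argument is correct and rests on the same second-moment idea as the paper's proof, but the paper's execution is tighter in three places. First, the paper takes $t=\mathrm{lcm}(n,d)$ and works directly with $c_{t}\bigl(\varphi(w)\bigr)$: since $\varphi(w)$ is both an $n$th power and a $d$th power and $n,d\mid t$, Lemma~\ref{lem:when a permutation is a dth power} gives $t\mid c_{t}\bigl(\varphi(w)\bigr)$ outright, whence the single pointwise inequality $c_{t}^{\,2}\ge t\cdot c_{t}$. There is no need to translate to $c_{tn}\bigl(\varphi(u)\bigr)$ or to invoke Bonferroni; indeed your own divisibility $D'\mid Y_{N}$ already yields $Y_{N}^{2}\ge D'\,Y_{N}$, which is all the inequality you require. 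Second, for the moments the paper simply cites Nica's theorem \cite{nica1994number} that the limiting distribution of $c_{t}\bigl(\varphi(u^{n})\bigr)$ depends only on $n$ and not on $u$, so one replaces $w$ by $x^{n}$ and reads both moments off Lemma~\ref{lem:number of t-cycles in an a-power}; this is exactly the ``alternative'' you mention in your last sentence, and it removes the need for the joint-trace extension of \cite{PP15} that you flag as the hard part. Your first-moment derivation via Theorem~\ref{thm:PP15 main} is correct (and a nice observation), but the second moment is precisely what Nica supplies, so there is no residual gap once you invoke it. Third, with $t=\mathrm{lcm}(n,d)$ the endgame is a one-liner: the limit inequality reads $\tfrac{n}{t}+\tfrac{1}{t^{2}}\ge 1$, and since $n\mid t$ with $t\ge d\ge 2$ this forces $t=n$, i.e.\ $d\mid n$. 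In short, your proof is sound, but the passage to $\varphi(u)$, the Bonferroni step, and the attempted rederivation of the moments from \cite{PP15} are all avoidable detours around a two-line application of Nica's result.
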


\begin{proof}
The statement is trivial for $d=1$ or $w=1$, so assume $d\ge2$
and $w\ne1$. Let $w=u^{b}$ with $u\ne1$ a non-power. Denote $t=\mathrm{lcm}\left(b,d\right)$.
Every image $\varphi\left(w\right)$ of $w$ in $S_{N}$ is a $b$th
power, and by Lemma \ref{lem:when a permutation is a dth power},
as $b\mid t$,
\[
b\mid c_{t}\left(\varphi\left(w\right)\right).
\]
By assumption, every image $\varphi\left(w\right)$ of $w$ in $S_{N}$
is also a $d$th power, and, as $d\mid t$,
\[
d\mid c_{t}\left(\varphi\left(w\right)\right).
\]
We deduce that $t=\mathrm{lcm}\left(b,d\right)\mid c_{t}\left(\varphi\left(w\right)\right)$.
Thus 
\[
c_{t}^{~2}\left(\varphi\left(w\right)\right)\ge t\cdot c_{t}\left(\varphi\left(w\right)\right).
\]
Taking expectations and then taking the limit as $N\to\infty$, we
see that 
\begin{equation}
\lim_{N\to\infty}\mathbb{E}\left[c_{t}^{~2}\left(\varphi\left(w\right)\right)\right]\ge t\cdot\lim_{N\to\infty}\mathbb{E}\left[c_{t}^{~}\left(\varphi\left(w\right)\right)\right].\label{eq:limits for w=00003Du to the a}
\end{equation}
Recall that $w=u^{b}$ with $u$ a non-power. It is a theorem of Nica
\cite[Theorem 1.1]{nica1994number} that the random variables $c_{t}\left(\varphi\left(w\right)\right)$
(where $\varphi\in\mathrm{Hom}\left(\F,S_{N}\right)$ is uniformly
random) have a limit distribution which depends only on $b$ and not
on\footnote{It is more generally true that $c_{1}\left(\varphi\left(w\right)\right),c_{2}\left(\varphi\left(w\right)\right),\ldots,c_{\ell}\left(\varphi\left(w\right)\right)$
have a limit joint distribution which only depends on $a$ -- see
\cite[Remark 31]{Linial2010}.} $u$. In particular, the limits in (\ref{eq:limits for w=00003Du to the a})
remain unchanged when $w$ is replaced with $x^{b}$. By Lemma \ref{lem:number of t-cycles in an a-power}
this gives
\[
\frac{b}{t}+\frac{1}{t^{2}}\ge t\cdot\frac{1}{t}=1.
\]
Since $t=\mathrm{lcm}\left(b,d\right)$ and $d\ge2$ we must have
$b=t$, so $d\mid b$.
\end{proof}

\begin{proof}[Proof of Proposition \ref{prop:primitive powers from S_N}]
Assume that for all $N$, $w$ induces the same measures on $S_{N}$
as $x^{d}$. In particular, every image of $w$ through an homomorphism
to $S_{N}$ is a $d$th power, so by Theorem \ref{thm:Lubotzky with homomorphisms to S_N},
$w=v^{d}$ for some $v\in\F$. We are now done by Theorem \ref{thm:free words inside a word}
applied with the word $x^{d}$.
\end{proof}
\begin{rem}
The phenomenon observed by Nica that if $u$ is a non-power then
(moments of) $u^{d}$-measures converge to the same limits as $x^{d}$-measures
in $S_{N}$, is true in many families of groups. It is true in the
wreath products $C_{m}\wr S_{N}$ (as illustrated in \cite{MP-surface-words})
and for general linear groups over finite fields \cite{West}. It
is also true for families of infinite compact groups, such as unitary
groups (\cite{MSS07} or \cite[Corollary 1.13]{MP-Un}) and Orthogonal
and compact Symplectic groups \cite[Corollary 1.17]{MP-On}.
\end{rem}

\begin{rem}
Assume $1\ne u\in\F$ is a non-power. In the notation of Section \ref{sec:Fixed-points-of},
$\tr_{u^{d}}\left(N\right)=\Phi_{\left\langle u^{d}\right\rangle ,\F}\left(N\right)$,
and to use Theorem \ref{thm:generalization of pi} for this case,
we apply it with $H=\left\langle x^{d}\right\rangle $ and $J=\left\langle x\right\rangle $.
Then, 
\[
\tr_{u^{d}}\left(N\right)=\Phi_{\left\langle u^{d}\right\rangle ,\F}\left(N\right)=\Phi_{\left\langle x^{d}\right\rangle ,\left\langle x\right\rangle }\left(N\right)+C\cdot N^{1-\pi_{\iota}\left(\left\langle x^{d}\right\rangle \right)}+O\left(N^{-\pi_{\iota}\left(\left\langle x^{d}\right\rangle \right)}\right),
\]
with $\iota\colon J\to\F$ defined by $x\mapsto u$, $\pi_{\iota}\left(\left\langle x^{d}\right\rangle \right)$
the smallest rank of an algebraic extension of $\left\langle u^{d}\right\rangle $
not contained in $\left\langle u\right\rangle $, and $C$ the number
of such algebraic extensions of rank $\pi_{\iota}\left(\left\langle x^{d}\right\rangle \right)$.
In \cite{Hanany2019measures-on-Sn} the first and last author study
more generalizations of the results in \cite{PP15}, and, in particular,
show that in this case $\pi_{\iota}\left(\left\langle x^{d}\right\rangle \right)=\pi\left(u\right)$,
and all algebraic extensions of $\left\langle u^{d}\right\rangle $
of rank $\pi\left(u\right)$ are also algebraic extensions of $\left\langle u\right\rangle $.
If we denote $f_{u}\left(N\right)=\tr_{u^{d}}\left(N\right)-\tr_{u}\left(N\right)$,
we get that
\[
f_{u}\left(N\right)=f_{x}\left(N\right)+O\left(N^{-\pi\left(u\right)}\right)=\delta\left(d\right)-1+O\left(N^{-\pi\left(u\right)}\right),
\]
where $\delta\left(d\right)$ is the number of positive divisors of
$d$.
\end{rem}

\section*{Acknowledgments}

We thank Nir Avni, Lior Bary-Soroker, Arno Fehm and Pavel Zalesskii
for beneficial comments.

\bibliographystyle{alpha}
\bibliography{commutator_and_powers}

\noindent Liam Hanany, School of Mathematical Sciences, Tel Aviv University,
Tel Aviv, 6997801, Israel\\
\texttt{liamhanany@mail.tau.ac.il }~\\

\noindent Chen Meiri, Department of Mathematics, Technion - Israel
Institute of Technology, Haifa 3200003 Israel\\
chenm@technion.ac.il\\

\noindent Doron Puder, School of Mathematical Sciences, Tel Aviv University,
Tel Aviv, 6997801, Israel\\
\texttt{doronpuder@gmail.com}
\end{document}